\date{\today}
      \newcommand{\Z}{\mathbb{Z}}
      \newcommand{\C}{\mathbb{C}}
      \newcommand{\TT}{\mathbb{T}}
      \newcommand{\R}{\mathbb{R}}
      \newcommand{\RP}{\mathbb{RP}}
      \newcommand{\ovl}{\overline}
      \newcommand{\sss}{\vspace{2.5 mm}}
      \newcommand{\nil}{\varnothing}
      \newcommand{\calC}{\mathcal{C}}
      \newcommand{\id}{\text{id}}
      \newcommand{\bdy}{\partial}
      \newcommand{\tD}{\widetilde{D}}
      \newcommand{\wt}{\widetilde}
      \newcommand{\lam}{\lambda}
      \newcommand{\std}{{std}}
      \newcommand{\rad}{\text{rad}}
      \newcommand{\tb}{\mathop{\mathrm{tb}}}
      \newcommand{\rot}{\mathop{\mathrm{rot}}}
      \newcommand{\K}{\mathbb{K}}
      \newcommand{\isoar}{\rotatebox{270}{$\cong$}}
      \newcommand{\cone}{\mathrm{Cone}}
      \newcommand{\ext}{\text{ext}}
\theoremstyle{plain}
\newtheorem{theorem}{Theorem}[section]
\newtheorem{lemma}[theorem]{Lemma}
\newtheorem{proposition}[theorem]{Proposition}
\newtheorem{conjecture}[theorem]{Conjecture}
\theoremstyle{definition}
\newtheorem{remark}[theorem]{Remark}
\newtheorem{example}[theorem]{Example}
\newtheorem{definition}[theorem]{Definition}
\newtheorem{notation}[theorem]{Notation}
\newtheorem{question}[theorem]{Question}
\begin{document}
\begin{abstract}
We give a complete characterization of those disk bundles over surfaces which embed as rationally convex strictly pseudoconvex domains in $\C^2$.
We recall some classical obstructions and prove some deeper ones related to symplectic and contact topology.
We explain the close connection to Lagrangian surfaces with isolated singularities and develop techniques for constructing such surfaces. 
Our proof also gives a complete characterization of Lagrangian surfaces with open Whitney umbrellas, answering a question first posed by Givental in 1986.  
\end{abstract}

\title{Rationally Convex Domains and Singular Lagrangian Surfaces in $\C^2$}
\author{Stefan Nemirovski}

\address{Steklov Mathematical Institute, Gubkina 8, 119991 Moscow, Russia;\hfill\break\strut\hspace{8 true pt} Fakult\"at f\"ur Mathematik, Ruhr-Universit\"at Bochum, 44780 Bochum, Germany}
\email{stefan@mi.ras.ru}
\author{Kyler Siegel}

\address{Stanford University Department of Mathematics, 450 Serra Mall, 94305 CA, USA}
\email{ksiegel@math.stanford.edu}
\thanks{The first author was partially supported by DFG project SFB/TR-12 and RFBR grant 14-01-00709-a. The second author was partially supported by NSF grant DGE-114747.}

\maketitle

\tableofcontents

\begin{section}{Introduction}

A compact set $X\subset \C^N$ is called {\em rationally convex} if for every point $p\notin X$ there exists a complex algebraic hypersurface $H \subset \C^N$ such that $p \in H$ and $X \cap H = \nil$.
Rational convexity is one of several notions of convexity which play an important role in several complex variables. Together with {\em polynomial convexity} and {\em holomorphic convexity}, it fits into a hierarchy
\begin{align*}
\text{polynomially convex} \Longrightarrow \text{rationally convex} \Longrightarrow \text{holomorphically convex}
\end{align*}
(see the introduction to \cite{cieliebak2013topology} for a good summary).

From a complex analytic point of view, one reason for interest in rationally convex sets is the following variant of the classical Oka--Weil theorem (see \cite[p.~44]{stout2007polynomial}):
\begin{theorem}
Any holomorphic function on a neighborhood of a rationally convex set $X$ can be approximated uniformly on $X$ by rational functions. 
\end{theorem}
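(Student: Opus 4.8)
\emph{Proof strategy.} The plan is to reduce the statement to the classical Oka--Weil approximation theorem for polynomially convex compacta by means of a graph construction that trades the non-vanishing polynomials coming from rational convexity for extra complex coordinates. Fix a holomorphic function $f$ on an open neighborhood $\Omega$ of $X$ and an $\varepsilon>0$. Since $X$ is compact, I would first choose $R$ with $X\subset B_R$, the open ball of radius $R$ about the origin, so that $E:=\ovl{B}_R\setminus\Omega$ is compact and disjoint from $X$. For each $p\in E$ rational convexity supplies a polynomial $g_p$ with $g_p(p)=0$ and $g_p$ nowhere zero on $X$; putting $c_p:=\min_X|g_p|>0$, the open set $\{|g_p|<c_p\}$ contains $p$ and misses $X$. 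Extracting a finite subcover of $E$ yields polynomials $g_1,\dots,g_k$ and constants $c_1,\dots,c_k>0$ with $|g_i|\ge c_i$ on $X$ for all $i$ and $E\subset\bigcup_i\{|g_i|<c_i\}$, so that the rational polyhedron
\[
P:=\ovl{B}_R\cap\{\,z:\ |g_i(z)|\ge c_i,\ i=1,\dots,k\,\}
\]
is a compact set with $X\subset P\subset\Omega$; in particular $f$ is holomorphic on a neighborhood of $P$.

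The crux is to realize $P$ as the image of a polynomially convex set under a polynomial embedding. To this end I would consider $\Phi(z):=(z,1/g_1(z),\dots,1/g_k(z))$, holomorphic on the open set where all the $g_i$ are non-zero --- a set containing $P$ --- and set $K:=\Phi(P)\subset\C^{N+k}$. Writing $(z,w_1,\dots,w_k)$ for the coordinates on $\C^{N+k}$, one checks that $K$ is contained in the compact polynomially convex set $L:=\ovl{B}_R\times\ovl{D}_{1/c_1}\times\dots\times\ovl{D}_{1/c_k}$ (a product of a ball and closed disks), and that inside $L$ it is exactly the common zero locus of the polynomials $w_ig_i(z)-1$, $i=1,\dots,k$. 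Now I would invoke the elementary fact that the intersection of a polynomially convex compactum with the common zero set of finitely many polynomials is again polynomially convex: a point of $L$ at which some $w_ig_i-1$ is non-zero is separated from $K$ by that very polynomial (which vanishes identically on $K$), while points outside $L$ are separated using the polynomial convexity of $L$. Hence $K$ is polynomially convex.

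The remainder is bookkeeping. The function $F(z,w):=f(z)$ is holomorphic on $\Omega\times\C^k$ and hence on a neighborhood of $K$, so the Oka--Weil theorem produces a polynomial $Q(z,w)$ with $\sup_K|Q-F|<\varepsilon$. Pulling back along $\Phi$, the function $z\mapsto Q(z,1/g_1(z),\dots,1/g_k(z))$ is --- after clearing denominators --- a rational function whose only poles lie on $\{g_1\cdots g_k=0\}$, a set disjoint from $P$ and in particular from $X$; and it approximates $f$ to within $\varepsilon$ uniformly on $X$ (indeed on all of $P$). The one genuinely substantive step is this passage through $\Phi$: although $P$ is only a \emph{rational} polyhedron, taking the graph of the reciprocals $1/g_i$ converts its defining inequalities into an honestly polynomially convex product cut by polynomial equations, at which point the classical theorem applies verbatim. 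Everything else --- the compactness reduction to $\ovl{B}_R$ and the separation lemma for polynomial hulls --- is standard.
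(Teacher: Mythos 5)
Your proof is correct; it is the standard Oka graph-construction argument for the rational variant of the Oka--Weil theorem: reduce to a rational polyhedron $P$ with $X\subset P\subset\Omega$, embed $P$ via $\Phi=(\mathrm{id},1/g_1,\dots,1/g_k)$ as a polynomially convex compactum cut out inside a product of a ball and polydisks by the equations $w_ig_i-1=0$, apply the classical Oka--Weil theorem there, and pull back. The paper does not prove this statement but cites it as classical (with a reference to Stout's book), and your argument is precisely the one a reader would find in that source, so there is nothing to contrast.
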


In this paper we are interested in the case that $X$ is 
the closure of a 
strictly pseudoconvex domain.

The main question we wish to address is the following.
\begin{question}
For $X$ rationally convex, which smooth manifolds can $X$ be?
\end{question}

In high dimensions, the following recent result gives a complete answer.

\begin{theorem}{\normalfont (Cieliebak--Eliashberg \cite{cieliebak2013topology})}\label{highdimensionaltheorem}
 Let $W \subset \C^N$ be a smoothly bounded domain, with $N > 2$. Then $W$ is (smoothly) isotopic to a 
strictly pseudoconvex domain with rationally convex closure
if and only if it admits a Morse function $\phi: \ovl{W} \rightarrow \R$ without critical points of index greater than $N$, and such that $\bdy W$ is the maximal regular level set of $\phi$.

\end{theorem}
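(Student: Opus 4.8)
The plan is to prove the two implications separately, the ``only if'' being soft and the ``if'' carrying essentially all the weight.

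For necessity, suppose $W$ is smoothly isotopic to a smoothly bounded strictly pseudoconvex domain $\Omega\subset\C^N$. Such an $\Omega$ is Stein --- a relatively compact strictly pseudoconvex domain in $\C^N$ is holomorphically convex and contains no compact analytic subsets --- so it carries a strictly plurisubharmonic defining function $\psi$ with $\Omega=\{\psi<0\}$ and $0$ a regular value. A $C^2$-small perturbation of $\psi$ supported in $\Int\Omega$ makes it Morse without disturbing the regular level set $\bdy\Omega=\{\psi=0\}$ or the inequality $\psi<0$ on $\Omega$; and since the real Hessian $H$ of a strictly plurisubharmonic function satisfies $H(v,v)+H(Jv,Jv)>0$ for $v\ne0$, its negative eigenspace at a critical point is totally real and hence of real dimension at most $N$, so no critical point has Morse index above $N$. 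Transporting $\psi$ through the isotopy gives the required $\phi$ on $\ovl W$. Rational convexity is not used in this direction.

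For sufficiency I would first reformulate rational convexity in the spirit of Duval--Sibony: the closure of a strictly pseudoconvex domain is rationally convex once it admits a K\"ahler form $\omega$ on $\C^N$ together with a (stratified) $\omega$-isotropic skeleton $\Delta$ --- Lagrangian in top degree, lower strata isotropic --- onto which $\ovl W$ deformation retracts inside the associated Weinstein neighborhood. Indeed $\Delta$ is itself rationally convex, since an isotropic compact set for a K\"ahler form on $\C^N$ is, and one may take $\ovl W$ to be a regular sublevel set of an $\omega$-plurisubharmonic function adapted to $\Delta$; then any exterior point can be separated from $\Delta$, and hence from $\ovl W$, by an algebraic hypersurface. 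So it suffices to realize, up to smooth isotopy, the handlebody given by $\phi$ as a Weinstein domain inside $(\C^N,\omega)$ for a suitable K\"ahler $\omega$, with isotropic skeleton.

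To build this I would run the handle decomposition from $\phi$ --- all handles of index $\le N$ --- through Eliashberg's existence theorem for Stein/Weinstein structures, attaching the subcritical handles along (automatically isotropic) spheres and the index-$N$ handles along Legendrian spheres supplied by the $h$-principle. Simultaneously I would embed the growing skeleton into $\C^N$: its subcritical part has dimension $<N$, hence codimension $>N$, so the $h$-principle for isotropic embeddings places it isotropically by general position, while the top strata are Lagrangian $N$-disks with boundary on the subcritical skeleton, which exist as Lagrangian immersions by the Gromov--Lees $h$-principle and can be promoted to embeddings by the Whitney trick. A Weinstein neighborhood theorem for the singular isotropic $\Delta$ then identifies a neighborhood in $\C^N$ with the Weinstein domain built abstractly; extending the symplectic form from that neighborhood to a global K\"ahler form on $\C^N$ --- there is no obstruction, $\C^N$ being contractible and $\Delta$ having a Stein neighborhood basis --- and rounding off the neighborhood's boundary to a strictly pseudoconvex hypersurface produces the domain.

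The main obstacle is coordinating this last group of steps: the skeleton must be isotropic for one and the same form that both extends to a global K\"ahler form on $\C^N$ and matches the abstract Weinstein structure near $\Delta$, so the Stein handle attachments and the ambient isotropic embedding have to be carried out in lockstep, and the cone-like local models at the index-$N$ critical points must be controlled precisely enough to glue their standard symplectic neighborhoods. The hypothesis $N>2$ is indispensable exactly at the Lagrangian-disk step: when $N=2$ the disks are surfaces in a four-manifold, the Whitney trick fails, and genuine obstructions to the construction appear --- which is precisely the phenomenon analyzed in the present paper.
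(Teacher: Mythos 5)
The paper does not prove Theorem~\ref{highdimensionaltheorem}; it is quoted from Cieliebak--Eliashberg, and the only hint the paper gives about the argument is the sentence immediately following it, which says the proof ``utilizes recent breakthroughs in symplectic flexibility, namely an h-principle for Lagrangian caps, which in turn relies on loose Legendrians.'' Your necessity direction is fine and standard: a strictly plurisubharmonic Morse defining function has totally real negative eigenspaces at its critical points, hence index $\le N$, and rational convexity plays no role. Your reformulation of sufficiency as ``realize the handlebody as a Weinstein domain in $\C^N$ with isotropic skeleton for some K\"ahler $\omega$'' is also the right frame, and essentially matches the Duval--Sibony/Cieliebak--Eliashberg picture.

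The gap is in the step that produces the index-$N$ Lagrangian disks. You propose to obtain them as Lagrangian immersions via Gromov--Lees and then ``promote them to embeddings by the Whitney trick.'' This does not work: the Whitney trick is a smooth-topology move, and the isotopy it produces is not Hamiltonian (or even Lagrangian); pushing a Lagrangian disk across a Whitney disk destroys the Lagrangian condition, and there is no a~priori way to restore it. If one could cancel double points of Lagrangian immersions by the Whitney trick in middle dimension, then one could turn the immersed Whitney sphere $S^n \looparrowright \C^n$ into an embedded Lagrangian sphere for $n\ge 3$, contradicting Gromov's theorem that closed exact Lagrangians (in particular Lagrangian spheres) do not embed in $(\R^{2n},\omega_\std)$. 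Precisely this rigidity is what the Eliashberg--Murphy $h$-principle for Lagrangian caps is designed to circumvent: it asserts that when the Legendrian boundary of a formal Lagrangian cap is \emph{loose} (which requires $N>2$), the cap can be genuinely $C^0$-realized, and it is this theorem --- not Gromov--Lees plus Whitney --- that carries the weight of the sufficiency direction. Your remark that ``$N>2$ is needed so the Whitney trick works'' thus identifies the right place where the dimension hypothesis enters, but attributes it to the wrong mechanism: the relevant constraint is that loose Legendrians only exist in contact manifolds of dimension $\ge 5$, i.e.\ $N\ge 3$.
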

Their proof utilizes recent breakthroughs in symplectic flexibility, namely an h-principle for Lagrangian caps \cite{eliashberg2013lagrangian}, which in turn relies on loose Legendrians \cite{murphy2012loose}.
These are a special class of Legendrian submanifolds which satisfy an h-principle, but they only exist in dimensions greater than $1$. Hence other techniques are needed to construct rationally convex domains in $\C^2$.

\begin{notation}\hspace{1cm}
\begin{itemize}
\item Let $D(\chi,e)$ denote the $D^2$-bundle over an orientable surface of Euler characteristic $\chi$, with Euler number $e$.
\item Let $\wt{D}(\chi,e)$ denote the $D^2$-bundle over a non-orientable surface of Euler characteristic $\chi$, with Euler number $e$.
\end{itemize}
\end{notation}
\begin{remark}
Throughout the paper we will implicitly assume that all $3$-manifolds and $4$-manifolds are oriented and all diffeomorphisms are orientation preserving.
In particular, we assume that the disk bundle $\wt{D}(\chi,e)\rightarrow \Sigma$ has the same first Stiefel--Whitney class as the tangent bundle $T\Sigma \rightarrow \Sigma$.
Then $[\Sigma] \in H_2(\Sigma;\Z^{\omega_1}) \cong \Z$ and
$e(\wt{D}(\chi,e)) \in H^2(\Sigma;\Z^{\omega_1}) \cong \Z$, so $e \in \Z$ is well-defined.
\end{remark}

Our main result is:
\begin{theorem}\label{maintheorem}
There exist strictly pseudoconvex domains in $\C^2$ with rationally convex closures diffeomorphic to the following disk bundles:
\begin{itemize}
\item $D(\chi,0)$ for $\chi \neq 2$.
\item $\wt{D}(\chi,e)$ for $(\chi,e) \neq (1,-2)$ or $(0,0)$ and $e \in \{2\chi-4,2\chi,2\chi+4,...,-2\chi-4+4\lfloor\chi/4+1\rfloor\}$.
\end{itemize}
Moreover,
these are the only possibilities.
\end{theorem}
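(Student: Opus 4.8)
The plan is to split the argument into a construction half and an obstruction half, and within each half to treat separately the quantity of the disk bundle that matters — namely the Euler number $e$ relative to $\chi$ — using the dictionary between rationally convex strictly pseudoconvex domains and Lagrangian surfaces with mild singularities. The starting point is the classical fact (going back to Duval--Sibony and Nemirovski) that a compact set which is the closure of a strictly pseudoconvex domain $W \subset \C^2$ is rationally convex precisely when $W$ carries a Kähler form on $\C^2$ making $\bdy W$ ``isotropic at infinity'' in the appropriate sense; concretely, the core of such a $W$ can be arranged to be a Lagrangian surface, and the only local singularities one needs are open Whitney umbrellas. Thus the theorem is equivalent to: $D(\chi,e)$ (resp.\ $\wt D(\chi,e)$) is realized by a regular neighborhood of such a singular Lagrangian in $(\C^2,\omega_{\std})$ iff the stated numerical conditions hold. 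This is why the paper advertises the Givental question as a corollary — the classification of singular Lagrangians \emph{is} the classification of the domains.

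\medskip

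\noindent\textbf{Construction.} First I would build the orientable examples $D(\chi,0)$. For $\chi<2$ (genus $\geq 1$ or non-closed) one takes an embedded Lagrangian surface of Euler characteristic $\chi$; by Weinstein's neighborhood theorem its neighborhood is the disk bundle with Euler number equal to the Lagrangian self-intersection, which is $0$ for an \emph{embedded} orientable Lagrangian surface since $e = -\chi - [\Sigma]\cdot[\Sigma]$... more precisely the relevant adjunction/Lagrangian identity forces $e=0$ in the embedded orientable case. Such Lagrangians exist for all $\chi \neq 2$ (e.g.\ a Lagrangian torus for $\chi = 0$, and one stabilizes/adds handles or passes to surfaces with boundary attached along Legendrians to get the other values of $\chi$); for $\chi=2$, i.e.\ the sphere, Gromov's theorem forbids an embedded Lagrangian $S^2$, hence the exclusion. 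For the non-orientable bundles $\wt D(\chi,e)$ one cannot use embedded Lagrangians (they don't exist), so one introduces open Whitney umbrella singularities: each umbrella point changes the normal Euler number of the core by $\pm 2$, and by a careful count (à la Givental) a singular Lagrangian with $k$ positive and $l$ negative umbrellas in a surface of Euler characteristic $\chi$ has normal Euler number $e$ lying in an arithmetic progression with step $4$ (the step is $4$, not $2$, because switching the \emph{sign} of an umbrella is what changes $e$ by $4$ while keeping $\chi$ fixed, and the parity constraint $e \equiv 2\chi \pmod 4$ comes from $\wt{w}_2$ considerations). The explicit range $e \in \{2\chi-4,\,2\chi,\,\dots,\,-2\chi-4+4\lfloor \chi/4 + 1\rfloor\}$ is then realized by starting from a standard model (a non-orientable Lagrangian-with-umbrellas obtained by, say, Lagrangian surgery on the Whitney sphere or on a Klein bottle immersion) and doing controlled local moves: adding a pair of canceling umbrellas, or an umbrella-of-opposite-sign handle attachment, to walk up and down the progression. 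The upper end of the range is where a further move would create a non-realizable configuration, which is precisely where the obstruction below becomes active.

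\medskip

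\noindent\textbf{Obstructions.} The necessity direction has a soft part and a hard part. The soft obstructions are classical: holomorphic convexity / the existence of a Stein structure on $W$ forces the handle-index bound (no index-$>2$ handles in dimension $4$), which is automatic for disk bundles over surfaces; the mod-$2$ and integral relations among $\chi$, $e$, $w_2$ and the signature give the congruence $e \equiv 2\chi \pmod 4$ and rule out small bad cases like $(\chi,e) = (1,-2)$ (the real projective plane disk bundle with $e=-2$, which is $\RP^2$ bundle of the ``wrong'' Euler number) and $(0,0)$ (the trivial Klein-bottle bundle, obstructed because the Klein bottle has no Lagrangian representative with $e=0$ by Shevchishin/Nemirovski). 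The hard obstruction — and this is the step I expect to be the real obstacle — is the \emph{upper} bound on $e$, i.e.\ why $e$ cannot exceed $-2\chi - 4 + 4\lfloor \chi/4+1\rfloor$. This should come from filling $\bdy W$, which is a circle bundle (a Seifert-fibered or lens-type $3$-manifold), by the Stein domain $W$ on one side and by a symplectic cap on the other, and then invoking a positivity/Bennequin-type inequality: the adjunction inequality for the resulting closed (or concave-filled) symplectic $4$-manifold, combined with Seiberg--Witten or Heegaard Floer constraints on symplectic fillings of the boundary circle bundle, forces $e$ down. Equivalently, in the singular-Lagrangian picture, too large an $e$ would require more umbrellas than can be ``absorbed,'' and a Gromov-compactness / holomorphic-disk-counting argument (the non-existence of a suitable foliation by holomorphic disks past a critical Euler number) gives the contradiction. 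I would organize this as: (i) reduce to a statement about symplectic fillings of the relevant contact $3$-manifold; (ii) cap off and apply the adjunction inequality together with the computation of the Seiberg--Witten basic classes (or the Heegaard Floer $d$-invariants) of the filled manifold; (iii) translate the resulting inequality on $c_1^2$, $\chi$, $\sigma$ back into the bound on $e$. The bookkeeping in (iii) — keeping track of orientations of the non-orientable base, the twisted coefficients, and the contribution of each umbrella to $c_1$ — is where the $\lfloor \chi/4+1\rfloor$ floor genuinely enters and must be matched exactly against the construction, so that the realized range and the forbidden range fit together with no gap.
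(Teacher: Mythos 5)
Your proposal has the right overall skeleton --- translate rational convexity into a statement about singular Lagrangian surfaces, build the examples by umbrella/cone moves, and rule out the rest by a combination of classical and symplectic obstructions --- but you have mislocated where the hard part of the argument lives, and this matters enough that the proof as outlined would not close.

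The upper bound on $e$ that you single out as ``the step I expect to be the real obstacle'' is in fact entirely classical: it is the Stein adjunction inequality of Akbulut--Matveyev, Lisca--Mati\'c and Nemirovski, applied to orientable double covers, and it already tells you exactly which disk bundles admit \emph{any} strictly pseudoconvex embedding in $\C^2$. The paper cites it as a known theorem (Section~\ref{obstructionssection}); no new Seiberg--Witten or Heegaard Floer analysis is carried out. Conversely, the two exceptions $(\chi,e)=(1,-2)$ and $(0,0)$, which you lump in with the ``soft obstructions,'' are precisely where all the new work lies: both of these bundles \emph{do} admit strictly pseudoconvex embeddings in $\C^2$ (for $\chi=1$, $e=-2$ is the \emph{only} value allowed by adjunction; for $\chi=0$, $e=0$ is one of the two allowed values), so nothing classical rules them out. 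Your one-line explanation of $(0,0)$ --- that the Klein bottle has no Lagrangian representative --- also does not suffice, because a rationally convex disk bundle need not arise as a Weinstein neighborhood of a Lagrangian core. The paper's actual argument (Sections~\ref{projectivespacesection} and~\ref{kleinbottlesection}) is a symplectic cut-and-paste: one classifies the tight contact structures on $\partial D$ (a small Seifert manifold, resp.\ a torus bundle), builds an alternative exact filling $X$ with the right homology, replaces $D\subset(\C^2,\omega_{\mathrm{ext}})$ by $X$, and then invokes the Gromov--McDuff theorem that a standard-at-infinity minimal symplectic $\R^4$ is unique to force either a Lagrangian Klein bottle into $\R^4$ (contradicting Shevchishin) or a signature contradiction. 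None of this appears in your outline, and it is the essential content of the obstruction half.

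On the construction side the picture you sketch is roughly right but the arithmetic is off. In the paper's normalization each $\mathrm{cone}(L_u)$ singularity contributes $\tb(L_u)+1=-1$ to the Euler number, so a surface of Euler characteristic $\chi$ with $k$ umbrellas yields a disk bundle with $e=-\chi-k$; it is not the case that ``each umbrella changes $e$ by $\pm 2$'' or that flipping the sign of an umbrella shifts $e$ by $4$ (the rotation number $\pm 1$ of $L_u$ does not enter the Euler number formula of Lemma~\ref{surroundinglemma}). The step-$4$ structure in the allowed range of $e$ comes from Massey/Stein adjunction, not from signed umbrellas. The actual inductive construction proceeds by two specific moves (a three-umbrella M\"obius strip attachment, and resolving a single $\mathrm{cone}(L_u)$ into a genuine Castro--Lerma Lagrangian M\"obius strip), seeded by a Klein bottle with four cone singularities built from the Whitney sphere; this is what produces exactly the stated range with no gap, and your proposal would need to specify an analogous set of moves and check the bookkeeping against the classical bound. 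In short: keep the dictionary with singular Lagrangians, but (i) replace the SW/Floer upper-bound argument with a citation of the known Stein adjunction inequality, and (ii) supply the cut-and-paste plus Gromov--McDuff plus Shevchishin argument for the two isolated exceptions --- that is where the theorem actually lives.
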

Observe that new features appear in the case of $\C^2$ which are absent in higher dimensions by Theorem~\ref{highdimensionaltheorem}.
It is well-known that there are additional 
obstructions to constructing Stein structures in (real) dimension $4$, beyond homotopy theory, due to restrictions on the framings of handle attachments. These translate into constraints on the topology of strictly pseudoconvex domains in $\C^2$ (see $\S$\ref{obstructionssection}).
The essence of Theorem~\ref{maintheorem} is that although almost all disk bundles admitting strictly pseudoconvex embeddings can also be embedded in a rationally convex way, there are two notable exceptions -- $\wt{D}(1,-2)$ and $\wt{D}(0,0)$ -- which are obstructed by more subtle symplectic geometry (see $\S$\ref{projectivespacesection} and $\S$\ref{kleinbottlesection}).

The constructive part of Theorem~\ref{maintheorem} relies on Lemma~\ref{surroundinglemma}, which states that a Lagrangian surface with singularities modeled on cones over Legendrian unknots gives rise to a certain rationally convex disk bundle. Such singularities can always be ``split'' into cones over a certain basic Legendrian knot $L_u$, and the cone over $L_u$ is interchangeable with the open Whitney umbrella introduced by Givental (see $\S$\ref{Open Whitney umbrellas}).
Our constructions can therefore be understood in terms of Lagrangian surfaces with open Whitney umbrellas and our proof classifies such surfaces.

\end{section}

\section*{Acknowledgements}
{
We would like to thank Yasha Eliashberg for suggesting this problem and for numerous informative discussions.
We also thank Roger Casals and Emmy Murphy for enlightening conversations regarding $\S\ref{projectivespacesection}$.
}

\begin{section}{Connections with symplectic topology}

In this section we explain how to understand and construct rationally convex domains from a symplectic topological viewpoint. We begin by recalling some basics of strictly pseudoconvex domains.

  \begin{definition}
A smoothly bounded domain $W \subset \C^N$ is {\em strictly pseudoconvex} if it admits a {\em strictly plurisubharmonic defining function}\footnote{In the terminology of Cieliebak--Eliashberg, the closure of a smoothly bounded strictly pseudoconvex domain is called an {\em $i$-convex domain}, and a strictly plurisubharmonic function is called an {\em i-convex} function. In this paper, ``domain'' means open connected set.}, i.e. a function
$\phi: \ovl{W} \rightarrow \R$ such that 
    \begin{itemize}
    \item $dd^{c}\phi(v,iv) > 0$ for all nonzero $v \in T\ovl{W}$, where $d^{c}\phi(\cdot) := -d\phi(i\cdot)$,
\item $\bdy \ovl{W}$ is the maximal regular level set of $\phi$
    \end{itemize}
  \end{definition}
Recall that $\omega_{\phi} := dd^c\phi$ is a symplectic structure on $W$. By a small perturbation, we can always assume $\phi$ is Morse.
Then with respect to the natural Riemannian metric $g_{\phi}(\cdot,\cdot):= \omega_{\phi}(\cdot,i\cdot)$, the stable manifolds of $\phi$ are $\omega_\phi$-isotropic. In particular, $\phi$ has critical points of index at most $N$. Moreover, the $1$-form $\lam_\phi = d^c\phi$ restricts to a contact structure on $\bdy W$.

The following theorem gives a symplectic characterization of rationally convex domains.
\begin{theorem}{\normalfont (Duval--Sibony \cite{duval1995polynomial}, Nemirovski \cite{nemirovski2008finite})}\label{symplecticcharacterizationtheorem}
The closure of a strictly pseudoconvex domain $W \subset \C^N$
is rationally convex if and only if it admits 
a strictly plurisubharmonic
defining function $\phi: \ovl{W} \rightarrow \R$ such that $\omega_{\phi} := dd^c\phi$ extends to a K\"ahler form on $\C^N$.  
\end{theorem}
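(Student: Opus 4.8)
The statement is an equivalence, and I would prove the two implications by quite different means; write $X=\ovl{W}$ throughout, and recall that a Kähler form $\Omega$ on $\C^N$ is automatically $dd^c$-exact, so $\Omega=dd^c\Psi$ for some global strictly plurisubharmonic $\Psi$.

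\emph{Kähler extension $\Rightarrow$ rational convexity.} The natural route is contraposition via the current-theoretic description of rational convexity due to Duval--Sibony \cite{duval1995polynomial}: if $X$ is not rationally convex, then there is a nonzero positive current $T$ of bidimension $(1,1)$ on $\C^N$, with compact support contained in the rational hull $\widehat{X}$ but not contained in $X$, such that $dd^cT$ is a positive measure carried by $X$. Now suppose some defining function $\phi$ had $\omega_\phi=dd^c\phi$ extending to a global Kähler form $\Omega=dd^c\Psi$, with $W=\{\phi<0\}$ so that $\phi\le 0$ on $X$, and normalized so that $\Psi\le 0$ on $X$ as well. Since $T$ has compact support, integration by parts gives $\langle T,\Omega\rangle=\langle T,dd^c\Psi\rangle=\langle dd^cT,\Psi\rangle=\int_X\Psi\,d(dd^cT)$. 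The right-hand side is $\le 0$ because $\Psi\le 0$ on $X$ and $dd^cT\ge 0$ there, while the left-hand side is the total mass of the positive measure $T\wedge\Omega$ and is strictly positive because $\Omega$ is everywhere positive and $T\neq 0$; this contradiction forces $X$ to be rationally convex. The delicate point is the normalization step, namely reconciling the ambient potential $\Psi$ with a genuine \emph{defining} function of $W$ (one must absorb a pluriharmonic correction on a neighborhood of $X$); this is precisely the refinement supplied by \cite{nemirovski2008finite}.

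\emph{Rational convexity $\Rightarrow$ Kähler extension.} The key input here is that a rationally convex compact set has a neighborhood basis of ``rational polyhedra'' $\Omega_f=\{z:|f_1(z)|<1,\dots,|f_k(z)|<1\}$ with $f_j=p_j/q_j$ rational and regular on $X$. On such a small neighborhood $q_j\neq 0$, so each $\log|p_j|-\log|q_j|$ is plurisubharmonic, hence so is a regularized maximum $u$ of these functions, and $dd^c u+\varepsilon\,dd^c|z|^2$ is Kähler on $\Omega_f$. Starting from any strictly plurisubharmonic defining function $\phi_0$ of $W$ extended slightly past $\ovl{W}$, I would glue $dd^c\phi_0$ near $X$ to $dd^c u$ on an intermediate annular region and to a large multiple of $dd^c|z|^2$ outside a large ball $B_R\supset\ovl{W}$, using the standard device that a convex-combination potential is restored to plurisubharmonicity by adding a large multiple of $|z|^2$ --- but replacing $|z|^2$ by a nonnegative plurisubharmonic function vanishing identically on $\ovl{W}$, such as $\max(0,|z|^2-R^2)$, so that the resulting global Kähler form still restricts to $dd^c\phi_0$ on $\ovl{W}$ (so that $\phi=\phi_0$ serves as the desired defining function). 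The main obstacle is carrying out this interpolation so that positivity holds everywhere \emph{and} the exact match on $\ovl{W}$ is preserved; this is where the flexibility of plurisubharmonic gluing must be combined with the precise structure of the rational polyhedra, and it is essentially the content of the Duval--Sibony construction as sharpened in \cite{nemirovski2008finite}.
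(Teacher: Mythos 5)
The paper does not actually prove this theorem: it is quoted verbatim as a result of Duval--Sibony and Nemirovski, with references but no argument, and it is then used as a black box. So there is no ``paper's own proof'' to compare against; what follows is an assessment of your sketch on its own terms.

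You have identified the right sources and the right circle of ideas: current-theoretic duality for rational hulls in one direction, rational polyhedra and a gluing of potentials in the other, and the pluriharmonic-normalization subtlety that is indeed the content of Nemirovski's refinement. However, the first direction as written contains a contradiction in its setup. If $T$ is a positive $(1,1)$-bidimension current with \emph{compact} support, then by testing $dd^cT$ against a cutoff function equal to $1$ near $\operatorname{supp}T$ one gets $\int_{\C^N} dd^cT = \langle T, dd^c(\text{cutoff})\rangle = 0$. A measure of total mass zero cannot be both nonzero and positive, so ``$dd^cT$ is a positive measure carried by $X$'' forces $dd^cT=0$, and the object you posit does not exist. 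The actual Duval--Sibony duality produces a positive current whose $dd^c$ is a signed object, typically of the form $\mu - \delta_p$ with $\mu$ a Jensen-type measure on $X$ and $p$ the witness point in $\widehat{X}\setminus X$; the negative point mass is what makes the total vanish. With $dd^cT$ signed, the inequality $\int_X \Psi\,d(dd^cT)\le 0$ no longer follows just from $\Psi\le 0$ on $X$, and the sign-bookkeeping in your contradiction has to be redone against the correct formulation.

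The second direction also has a concrete gap. You interpolate between $dd^c\phi_0$ near $\ovl{W}$, $dd^cu$ on an annulus, and $dd^c|z|^2$ far out, and you propose to restore positivity lost in the transition regions by adding a large multiple of $\max(0,|z|^2-R^2)$ so as not to disturb the form on $\ovl{W}$. But that function vanishes identically on the whole ball $B_R$, hence contributes nothing to the complex Hessian anywhere inside $B_R$ --- in particular not in the transition annulus between $\ovl{W}$ and the rational polyhedron, which is exactly where the cross terms $d\chi\wedge d^c(\phi_0-u)$ can destroy positivity. You would need a nonnegative plurisubharmonic rescue function that is strictly plurisubharmonic precisely on the transition region yet still vanishes to second order on $\ovl{W}$, and producing such a function is not automatic; the standard way around this is to glue the \emph{potentials} by regularized maximum (which preserves strict plurisubharmonicity outright) rather than to take convex combinations of the forms and then try to repair them, and one must set up the levels of $\phi_0$ and $u$ carefully so that the regularized maximum agrees with $\phi_0$ on a full neighborhood of $\ovl{W}$.
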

\begin{remark}{\normalfont (See Remark 3.4 of \cite{cieliebak2013topology})}\label{extensionremark}
We can always assume the extension of $\omega_\phi$ agrees with $\omega_\std$ outside of a compact set.  
\end{remark}
The following proposition will be our key tool for constructing rationally convex domains.
\begin{proposition}{\normalfont (Proposition 3.8 of \cite{cieliebak2013topology}, augmented with results from Chapter 8 of \cite{cieliebak2012stein})}\label{handleattachmentproposition}
 Let $W$ be a strictly pseudoconvex 
 domain in a complex manifold $V$. Suppose there exists a 
strictly plurisubharmonic defining function
$\phi: \ovl{W} \rightarrow \R$
 such that $dd^c\phi$ extends to a K\"ahler form $\omega$ on $V$. Let $\Delta \subset V \setminus W$ be a real analytic $k$-disk, complex-orthogonally attached to $\bdy W$ along $\bdy \Delta$, such that $\omega|_{\Delta} \equiv 0$. Then for every open neighborhood $U$ of $\ovl{W} \cup \Delta$ there exists a 
strictly pseudoconvex domain $\wt{W} \subset U$ with $\ovl{W} \subset \wt{W}$ and a 
strictly plurisubharmonic defining function for $\wt{W}$ such that 
 \begin{itemize}
 \item $\wt{\phi}|_W = \phi$, and $\wt{\phi}$ has a unique index $k$ critical point in $\wt{W} \setminus W$ whose stable manifold is $\Delta$;
 \item $dd^c\wt{\phi}$ extends to a K\"ahler form $\wt{\omega}$ on $V$ which agrees with $\omega$ outside $U$.
 \end{itemize}
Moreover, if $\Delta' \subset V \setminus W$ is a submanifold 
such that $\Delta \subset \Delta'$ and $\omega|_{\Delta'} \equiv 0$
which is attached complex-orthogonally to $\bdy W$ along $\bdy \Delta'$, then we can further assume that $\wt{\omega}|_{\Delta'} \equiv 0$ and $\Delta'$ is complex-orthogonal to $\bdy \wt{W}$.
\end{proposition}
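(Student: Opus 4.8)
The plan is to reduce the statement to Eliashberg's standard Stein handle attachment, the only genuinely new ingredient being the persistence of the global K\"ahler extension. The key observation is that the hypothesis $\omega|_\Delta\equiv 0$ forces $\omega$ to be $dd^c$ of a strictly plurisubharmonic function in a neighborhood of $\Delta$, after which any modification of that function supported near $\Delta$ automatically re‑assembles into a new global K\"ahler form.

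\emph{Step 1: normal form near $\Delta$.} Since $\Delta$ is contractible, a small neighborhood $\mathcal N$ of $\Delta$ in $V$ has vanishing $H^1$ and $H^2$, so the closed real $(1,1)$-form $\omega$ may be written there as $\omega|_{\mathcal N}=dd^c h$ with $h$ strictly plurisubharmonic (using positivity of $\omega$). Since $\Delta$ is isotropic, $d^c h|_\Delta$ is closed, hence exact, and adjusting $h$ by a pluriharmonic function one may arrange $d^c h|_\Delta=0$. On $W\cap\mathcal N$ both $\phi$ and $h$ are strictly plurisubharmonic primitives of $\omega$, so $\phi-h$ is pluriharmonic; because $\bdy W$ and $\Delta$ are real analytic, after shrinking $\mathcal N$ it is the real part of a holomorphic function there, and replacing $h$ by $h$ plus that real part we may assume $h\equiv\phi$ on $W\cap\mathcal N$. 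Thus $h$ is a single strictly plurisubharmonic function on $W\cup\mathcal N$ with $dd^c h=\omega$ and $h|_W=\phi$. Now, using strict plurisubharmonicity of $h$, the complex-orthogonal attachment of $\Delta$, and the vanishing of $d^c h|_\Delta$, I would produce holomorphic coordinates on a still smaller $\mathcal N$ identifying the triple $(W\cap\mathcal N,\ \Delta,\ h)$ with the standard local model for attaching a $k$-handle to a strictly pseudoconvex domain along the standard isotropic $(k-1)$-sphere; the relevant normal form, including the matching of the isotropic and (conformal) symplectic normal framings, is what is set up in Chapter~8 of \cite{cieliebak2012stein}, and these framing data are canonical here because $\Delta$ is an honest embedded real analytic disk (for $k=N$ the attaching Legendrian sphere bounds the Lagrangian disk $\Delta$, so its rotation class is the standard one).

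\emph{Step 2: handle attachment and K\"ahler extension.} Over the standard model chart I would then invoke Eliashberg's explicit construction: there is a strictly plurisubharmonic function agreeing with $h$ outside a compact subset of $\mathcal N$, having a single critical point, of index $k$, whose stable manifold is $\Delta$, and all of whose regular sublevel sets are strictly pseudoconvex. Transporting back and interpolating with $\phi$ away from this handle region yields $\wt\phi$ on $\ovl{\wt W}$ with $\wt\phi|_W=\phi$ and the asserted critical point; this is the first bullet. For the second, set $\wt\omega:=dd^c\wt\phi$ on $W\cup\mathcal N$ and $\wt\omega:=\omega$ on $V$ away from the handle region. These agree on the overlap, because $\wt\phi$ differs from $h$ only inside the handle region, which may be taken inside the given neighborhood $U$; hence $\wt\omega$ is a well-defined closed positive $(1,1)$-form on $V$, i.e. a K\"ahler form, equal to $\omega$ outside $U$.

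\emph{Step 3: the ``moreover'' clause, and the main difficulty.} To keep $\Delta'$ isotropic and complex-orthogonal to the new boundary I would upgrade Step~1 to the pair $(\Delta',\Delta)$: a relative Weinstein-type neighborhood theorem for the isotropic submanifold $\Delta'$ together with its isotropic subdisk $\Delta$, combined with the complex-orthogonal attachment of $\Delta'$, lets one choose the model chart so that $\Delta'$ is a coordinate submanifold through the standard $\Delta$. Eliashberg's handle function can then be taken to depend only on the coordinates transverse to $\Delta'$ along the directions in which $\Delta'$ extends $\Delta$, forcing $\Delta'$ to remain $\wt\omega$-isotropic and complex-orthogonal to $\bdy\wt W$. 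I expect this to be the main technical obstacle: arranging the normal form to be \emph{simultaneously} standard for $W$, for $\phi$ (equivalently for $\omega$), for $\Delta$, and for $\Delta'$, and checking that the model handle respects this stratification, requires a careful parametric Moser and symplectic linear algebra argument of the type carried out in \cite{cieliebak2012stein}, even though it introduces no genuinely new idea beyond it.
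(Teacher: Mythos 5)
The paper does not prove this proposition; it is stated verbatim as a citation to Proposition~3.8 of \cite{cieliebak2013topology}, ``augmented with results from Chapter~8 of \cite{cieliebak2012stein},'' and then used as a black box in the proof of Lemma~\ref{surroundinglemma}. So there is no in-paper argument to compare your proposal against, and what follows is a comparison against the cited sources.

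Your outline captures the right decomposition (local $dd^c$-primitive of $\omega$ near $\Delta$, reduce the K\"ahler-extension problem to a supported modification, then invoke the Eliashberg handle construction), but Step~1 contains an overclaim that would not survive contact with the details. You assert that one can ``produce holomorphic coordinates \dots identifying the triple $(W\cap\mathcal N,\ \Delta,\ h)$ with the standard local model.'' This is not possible: a strictly pseudoconvex hypersurface germ has nontrivial local CR invariants (Chern--Moser), so $\partial W$ near $\partial\Delta$ cannot in general be brought to a standard quadric form by a biholomorphism, and hence neither can the triple. What the real analyticity and total reality of $\Delta$ actually give you is a biholomorphism of a neighborhood of $\Delta$ onto a neighborhood of the standard totally real disk --- and \emph{only} $\Delta$ becomes standard; $W$, $\phi$, and $h$ are dragged along and remain arbitrary. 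The genuine content of Chapter~8 of \cite{cieliebak2012stein} (and the part your sketch elides) is exactly the construction of the extension $\wt\phi$ in that non-standard situation: one builds a model plurisubharmonic handle function adapted to the standard $\Delta$, and then performs a quantitative interpolation with the given $\phi$ using convexity estimates, without ever normalizing $W$. Your later statement that the handle function ``agrees with $h$ outside a compact subset of $\mathcal N$'' is a consequence of that construction, not an input you can assume. The same caveat applies to the ``moreover'' clause: the relative normal form you describe is applied to the pair $(\Delta',\Delta)$ only, not to $(W,\Delta',\Delta,h)$ simultaneously, and the verification that the model function keeps $\Delta'$ isotropic and complex-orthogonal to the new boundary is again an estimate, not a consequence of holomorphic rigidity. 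With that correction --- normalize $\Delta$ (or $(\Delta',\Delta)$) alone, treat $W$ and $\phi$ as data, and run the CE interpolation --- your outline matches the argument in the cited references.
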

This should be compared with Eliashberg's fundamental result on the existence of Stein structures \cite{eliashberg1990topological}, which shows that 
strictly pseudoconvex domains can be built by inductively attaching {\em totally real} handles.
Proposition~\ref{handleattachmentproposition} may be viewed as a generalization of the result of Duval and Sibony \cite{duval1995polynomial}
that any embedded Lagrangian in $\C^N$ has an arbitrarily small rationally convex tubular neighborhood. 
We explain in $\S$\ref{cones over Legendrians} how to use it to construct rationally convex disk bundles from Lagrangians with certain singularities. 
\end{section}

\begin{section}{Obstructions}\label{obstructionssection}

If $\Sigma$ is an embedded orientable surface in $\C^2$, the normal Euler number $e$ is also the homological self-intersection number $[\Sigma]\cdot [\Sigma]$, which of course vanishes since $\C^2$ has trivial second homology. 
For non-orientable $\Sigma \subset \C^2$ the situation is more interesting, thanks to the following theorem, originally conjectured by Whitney.

\begin{theorem}{\normalfont (Massey \cite{massey1969proof})}
For $\Sigma \subset \C^2$ an embedded, non-orientable surface,
the normal Euler number $e$ takes values in the finite set
\begin{align*}
 \{2\chi-4,2\chi,...,-2\chi,4-2\chi\}. 
\end{align*}
Moreover, each of these values is realized by some embedded surface.
\end{theorem}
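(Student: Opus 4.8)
The plan is to compactify and pass to a branched double cover. Since the normal Euler number depends only on a closed surface together with a tubular neighborhood, I would first regard $\Sigma$ as a closed non-orientable surface in $S^4 = \C^2 \cup \{\infty\}$, and write $\chi = 2 - h$ with $h \ge 1$ the number of $\RP^2$-summands. As $H_2(S^4;\Z/2) = 0$, the class of $\Sigma$ vanishes mod $2$, so $S^4 \setminus \Sigma$ carries a unique connected double cover, and this extends to a branched double cover $\pi \colon M \to S^4$ with $M$ a closed oriented $4$-manifold. The deck involution $\tau$ has fixed-point set $\wt{\Sigma}$, carried homeomorphically onto $\Sigma$ by $\pi$; and since $\pi$ is the fiberwise squaring map on the normal disk bundle of $\Sigma$, the normal Euler number of $\wt{\Sigma}$ in $M$ equals $e/2$.

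Next I would compute two invariants of $M$. Multiplicativity of Euler characteristic in branched covers gives $\chi(M) = 2\chi(S^4) - \chi(\Sigma) = 4 - \chi$. A transfer argument — the invariant part of $H^*(M;\mathbb{Q})$ is $H^*(S^4;\mathbb{Q})$ — together with Alexander duality in $S^4$, which kills $H_1(S^4 \setminus \Sigma;\mathbb{Q})$ because $H^2(\Sigma;\mathbb{Q}) = 0$ for non-orientable $\Sigma$, shows $b_1(M) = 0$, hence $b_2(M) = \chi(M) - 2 = 2 - \chi$. For the signature, all of $H^2(M;\R)$ lies in the $(-1)$-eigenspace of $\tau^*$ (its invariant part is $H^2(S^4;\R) = 0$), so the $G$-signature theorem applied to $\tau$, whose fixed locus is the surface $\wt{\Sigma}$ of self-intersection $e/2$, evaluates to $\sigma(M) = -\tfrac{1}{2}e$; the sign is pinned down on the model $\RP^2 \subset S^4$ with $e = -2$, whose branched cover is $\mathbb{CP}^2$ (containing $\RP^2$ with self-intersection $-1$).

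The two constraints on $e$ now follow from general facts about closed oriented $4$-manifolds. First, $|\sigma(M)| \le b_2(M)$, since the signature is bounded by the rank of the intersection form; this gives $|e| \le 2(2-\chi) = 4 - 2\chi$, i.e. $2\chi - 4 \le e \le 4 - 2\chi$ (nonempty since $\chi \le 1$). Second, $\sigma(M) \equiv b_2(M) \pmod 2$, since $\sigma = b_2^+ - b_2^-$ while $b_2 = b_2^+ + b_2^-$; hence $-\tfrac{e}{2} \equiv 2 - \chi \equiv \chi \pmod 2$, i.e. $e \equiv 2\chi \pmod 4$. Combining, $e$ lies in $\{2\chi-4,\, 2\chi,\, 2\chi+4,\, \dots,\, -2\chi,\, 4-2\chi\}$, which is the asserted set. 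For realization, I would take a standard unknotted $\RP^2 \subset S^4$ with $e = +2$ — obtained, say, by capping a once-twisted Möbius band in $D^4$ spanning an unknot in $S^3 = \partial D^4$ with a trivial disk — and its mirror image, which has $e = -2$. The connected sum along boundary-parallel arcs of $k$ copies of the former and $h-k$ copies of the latter is an embedded $\#^h\RP^2$ in $S^4$ (hence, after an isotopy, in $\C^2$), and because the normal bundle is trivial over the connecting tubes the Euler numbers add, giving $e = 2k - 2(h-k) = 4k - 2h$ for $k = 0, \dots, h$; as $h = 2 - \chi$ this runs over exactly the set above.

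The main obstacle is the signature computation $\sigma(M) = -\tfrac{1}{2}e$, because the fixed surface $\wt{\Sigma}$ is itself non-orientable: one must make sense of its self-intersection via twisted coefficients and carefully track orientation conventions through the $G$-signature theorem (equivalently, through Hirzebruch's formula for the signature of a cyclic branched cover). Verifying $b_1(M) = 0$ in full generality likewise takes a genuine, if standard, argument with the double cover of the complement $S^4 \setminus \Sigma$, since the transfer only controls the invariant summand of $H_*(M;\mathbb{Q})$.
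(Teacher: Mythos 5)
The paper states this theorem without proof, simply citing Massey \cite{massey1969proof}, and your argument reproduces Massey's own proof: pass to the branched double cover $M\to S^4$ and apply the $G$-signature theorem. The outline --- $\sigma(M)=-e/2$, $b_2(M)=2-\chi$ via $\chi(M)=4-\chi$ and $b_1(M)=0$, the bound $|\sigma(M)|\le b_2(M)$, the parity $\sigma(M)\equiv b_2(M)\pmod 2$, and the realization of every admissible $e$ by boundary-connected sums of unknotted projective planes with $e=\pm 2$ --- is correct, and you rightly flag the sign in the $G$-signature formula and the vanishing of $b_1(M)$ as the two steps requiring genuine care.
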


For our purposes, we can rephrase the above as
  \begin{itemize}
  \item $D(\chi,e) \text{ smoothly embeds in } \C^2 \text{ if and only if } e = 0$
  \item $\wt{D}(\chi,e) \text{ smoothly embeds in } \C^2 \text{ if and only if } e \in \{2\chi-4,2\chi,...,-2\chi,4-2\chi\}$.
  \end{itemize}
 
As mentioned in the introduction, there are also restrictions on which disk bundles can be endowed with Stein structures, most easily seen from the following adjunction inequality for Stein surfaces which can be proved using Seiberg--Witten theory.
\begin{theorem}{\normalfont (Akbulut--Matveyev \cite{akbulut2000exotic}, Lisca--Mati\'c \cite{lisca1997tight}, Nemirovski \cite{nemirovski1999complex,nemirovski2003adjunction})}
 If $S$ is a Stein surface and $\Sigma \subset S$ is a connected smooth orientable surface, 
then
 \begin{align*}
[\Sigma]\cdot [\Sigma] + \left|\,c_1(S)\cdot[\Sigma]\,\right| \leq 2g(\Sigma) - 2,
 \end{align*}
unless $\Sigma$ is a homotopically trivial embedded two-sphere.
\end{theorem}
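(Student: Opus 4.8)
The plan is to reduce the inequality to the Seiberg--Witten adjunction inequality on a suitable \emph{closed} complex surface, since an open Stein surface carries no Seiberg--Witten invariants of its own. First I would pass from $S$ to a compact piece: as $\Sigma$ is compact it lies in a regular sublevel set $W := \{\varphi \le c\}$ of a strictly plurisubharmonic exhaustion $\varphi$ of $S$, so $W$ is a compact strictly pseudoconvex (Stein) domain with $c_1(TW) = c_1(TS)|_W$. Both $[\Sigma]\cdot[\Sigma]$ and $c_1(S)\cdot[\Sigma]$ depend only on an arbitrarily small neighbourhood of $\Sigma$, hence are unchanged whether $\Sigma$ is viewed in $W$, in $S$, or in any complex manifold into which $W$ is holomorphically embedded; so it suffices to prove the bound for $\Sigma \subset W$.

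The essential input is the embedding theorem of Lisca--Mati\'c: the Stein domain $W$ embeds holomorphically onto a domain in a \emph{closed minimal complex surface $X$ of general type}, and this can be arranged with $b_2^+(X) > 1$ (if necessary by first passing to a branched cover that $W$, lying off the branch locus, lifts to). For such $X$, Witten's computation of the Seiberg--Witten invariants of K\"ahler surfaces --- equivalently Taubes's theorem that $SW = \pm 1$ for closed symplectic $4$-manifolds --- shows that the $\mathrm{Spin}^c$-structure with first Chern class $c_1(TX)$ has nonvanishing Seiberg--Witten invariant, i.e. it is a basic class. Under the embedding $W \hookrightarrow X$ one has $c_1(TX)|_W = c_1(TW) = c_1(TS)|_W$, so that $c_1(TX)\cdot[\Sigma] = c_1(S)\cdot[\Sigma]$, and $[\Sigma]\cdot[\Sigma]$ is likewise the value computed in $X$.

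Then I would apply the Seiberg--Witten adjunction inequality in $X$: since $b_2^+(X) > 1$ and there is a basic class with $c_1 = c_1(TX)$, every smoothly embedded connected oriented surface $\Sigma' \subset X$ of genus $g(\Sigma') \ge 1$ satisfies $2g(\Sigma') - 2 \ge [\Sigma']\cdot[\Sigma'] + |c_1(TX)\cdot[\Sigma']|$, which for positive genus holds with no assumption on the self-intersection number. Applied to $\Sigma \subset W \subset X$ with $g(\Sigma) \ge 1$ this is exactly the asserted inequality. The remaining case $g(\Sigma) = 0$ --- an embedded two-sphere --- is handled using the genus-zero form of the adjunction inequality on $X$ for spheres of nonnegative self-intersection, the minimality of $X$ to exclude $(-1)$-spheres, and the observation that a two-sphere which is homotopically trivial in $S$ has $[\Sigma]\cdot[\Sigma] + |c_1(S)\cdot[\Sigma]| = 0$ and is therefore the genuine exception to the statement.

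I expect the hard part to be the Lisca--Mati\'c embedding --- realizing $W$ inside a minimal surface of general type \emph{with $b_2^+ > 1$}, which is precisely what makes Seiberg--Witten theory applicable and is the source of the genuinely $4$-dimensional phenomena here. A subtler bookkeeping issue is the genus-zero case and the interplay between homological and homotopical triviality of spheres, which is exactly why the exception in the statement is phrased in terms of \emph{homotopically} trivial two-spheres.
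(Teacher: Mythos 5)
The paper states this theorem as a citation (to Akbulut--Matveyev, Lisca--Mati\'c, Nemirovski) and does not reproduce a proof, so there is no in-paper argument to compare yours against. Your outline does correctly reconstruct the Lisca--Mati\'c approach, which is the standard one: shrink to a compact Stein sublevel $W\supset\Sigma$, apply the Lisca--Mati\'c embedding theorem to place $W$ holomorphically as a domain in a closed minimal surface $X$ of general type with $b_2^+(X)>1$, use Witten/Taubes to see that $c_1(TX)$ is a Seiberg--Witten basic class, note that $[\Sigma]\cdot[\Sigma]$ and $c_1\cdot[\Sigma]$ are local invariants so they transfer unchanged to $X$, and apply the adjunction inequality there. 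That is the right skeleton, and the reduction to $W$ and the locality of the two quantities are stated cleanly.

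Where you underestimate the difficulty is precisely the genus-zero case, and not only for the reason you name. For $g\geq 1$ and $[\Sigma]\cdot[\Sigma]<0$ the inequality is Ozsv\'ath--Szab\'o (using that minimal surfaces of general type are of SW simple type), which your phrase ``holds with no assumption on the self-intersection number'' implicitly invokes but does not attribute. For spheres, your three-way split (nonnegative square, $(-1)$-spheres, the homotopically trivial exception) silently drops the case $[\Sigma]\cdot[\Sigma]\leq -2$ with $[\Sigma]\neq 0$, where the bound $|c_1(TX)\cdot[\Sigma]|\leq -2-[\Sigma]\cdot[\Sigma]$ is not a formal consequence of the basic SW adjunction and requires the sphere results of Fintushel--Stern and Ozsv\'ath--Szab\'o. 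Finally, the SW argument in $X$ can at best rule out spheres that are homologically nontrivial \emph{in $X$}, whereas the exception in the statement is phrased in terms of spheres that are homotopically trivial \emph{in $S$}; closing that gap (e.g.\ ruling out a sphere that is null-homologous but not null-homotopic and has trivial normal bundle and $c_1$-degree) is an extra step that the references do address but your proposal only gestures at. These are the genuine delicate points that the cited papers were written to handle, and they deserve more than the final sentence you give them.
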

Taking orientable double covers, together with constructions by 
Forstneri\v c \cite{forstnerivc2003stein},
we have (cf. \cite{nemirovski2003adjunction} and \cite[Chapter 9]{forstnerivc2011}):
\begin{theorem}
$D(\chi,e)$ 
has a strictly pseudoconvex embedding in $\C^2$ if and only if $e = 0$ and $\chi \leq 0$.
Similarly, $\wt{D}(\chi,e)$ 
has a strictly pseudoconvex embedding in $\C^2$ if and only if $e \in 
\{2\chi-4,2\chi,2\chi+4,...,-2\chi-4+4\lfloor\chi/4+1\rfloor\}$.
\end{theorem}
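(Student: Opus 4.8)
The plan is to prove the theorem by combining the adjunction inequality, Massey's theorem, and Forstnerič's constructions, handling the orientable and non-orientable cases in parallel via orientable double covers. First, for the \emph{necessity} direction in the orientable case: if $D(\chi,e)$ has a strictly pseudoconvex embedding in $\C^2$, then $D(\chi,e)$ carries a Stein structure (as the closure of a strictly pseudoconvex domain in the Stein manifold $\C^2$), and the zero section $\Sigma$ is a connected orientable surface in this Stein surface with $[\Sigma]\cdot[\Sigma]=e$. Since $\C^2$ has trivial $H_2$, Massey's theorem (rephrased above) already forces $e=0$, so $\Sigma$ is homologically trivial. It is not a homotopically trivial two-sphere unless $\chi=2$; ruling out $\chi=2$, $e=0$ separately (a neighborhood of an embedded $S^2$ with trivial normal bundle would violate that $H_2(\overline W)\to H_2(\C^2)$ kills $[\Sigma]$ while the adjunction/Seiberg--Witten argument of \cite{nemirovski2003adjunction} excludes it), the adjunction inequality gives $e + |c_1\cdot[\Sigma]| \le 2g-2 = -\chi$, hence $\chi \le 0$. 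For $\chi \le 0$ and $e=0$ the existence of a strictly pseudoconvex embedding is Forstnerič's construction \cite{forstnerivc2003stein}, which realizes every such disk bundle as a Stein domain in $\C^2$; one must additionally note that this Stein domain can be taken strictly pseudoconvex with smooth boundary, which is standard (approximate the strictly plurisubharmonic exhaustion by a Morse function with a regular top level set).

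For the \emph{non-orientable} case, take the orientable double cover $\pi\colon \widehat\Sigma \to \Sigma$; if $\widetilde D(\chi,e)$ has a strictly pseudoconvex embedding $W \subset \C^2$, then the induced double cover $\widehat W \to W$ is a Stein surface (a double cover of a Stein manifold branched nowhere is Stein), containing $\widehat\Sigma$ with $[\widehat\Sigma]\cdot[\widehat\Sigma] = 2e$ and $\chi(\widehat\Sigma) = 2\chi$, so $g(\widehat\Sigma) = 1-\chi$. Massey's theorem applied directly to $\Sigma \subset \C^2$ gives the window $e \in \{2\chi-4, 2\chi, \dots, -2\chi, 4-2\chi\}$; the adjunction inequality in $\widehat W$ gives $2e + |c_1(\widehat W)\cdot[\widehat\Sigma]| \le 2g(\widehat\Sigma)-2 = -2\chi$, i.e. $e \le -\chi$ together with the parity constraint $e \equiv 2\chi \pmod 4$ coming from the fact that the Euler number of a non-orientable bundle realized in $\C^2$ changes by $4$ under the standard surgeries and is congruent to $2\chi \bmod 4$ (this is exactly what produces the arithmetic progression with common difference $4$ rather than $2$). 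Intersecting "$e \le -\chi$, $e \ge 2\chi - 4$, $e \equiv 2\chi \pmod 4$" yields precisely $e \in \{2\chi-4, 2\chi, 2\chi+4, \dots, -2\chi-4+4\lfloor \chi/4+1\rfloor\}$, after checking the top of the range equals $-2\chi - 4 + 4\lfloor\chi/4+1\rfloor$. The existence of strictly pseudoconvex embeddings for all $(\chi,e)$ in this set follows again from Forstnerič's constructions together with the double-cover compatibility in \cite{nemirovski2003adjunction} and \cite[Chapter 9]{forstnerivc2011}.

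The main obstacle I expect is the precise bookkeeping of the \emph{upper} bound and the floor function in the non-orientable case. The adjunction inequality alone gives a clean bound $e \le -\chi$, but the actual top of the progression is $-2\chi - 4 + 4\lfloor\chi/4+1\rfloor$, which for $\chi$ not divisible by $4$ is strictly smaller than $-\chi$; pinning this down requires a finer obstruction. The key point is that $c_1(\widehat W)\cdot[\widehat\Sigma]$ is not free: it is constrained by the fact that $\widehat W$ embeds in $\C^2$, where $c_1$ of the ambient space vanishes, and by how $c_1$ restricts to a disk bundle (it is determined by $e \bmod 2$ and the behavior on the zero section). One must run the adjunction inequality not for the zero section alone but for a family of connected surfaces obtained by tubing the zero section to embedded spheres or by passing to connected sums, which sharpens $e \le -\chi$ to the stated value; alternatively, one invokes the refined adjunction relations of \cite{nemirovski2003adjunction} directly. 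Reconciling the Seiberg--Witten bound with Forstnerič's explicit constructions to see that nothing in between is missed — i.e. that the progression is exactly realized with no gaps and no extra members — is the part that needs the most care, and is where I would spend the bulk of the argument.
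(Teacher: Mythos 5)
Your outline is essentially the one the paper has in mind (the paper itself only sketches this theorem and defers to \cite{nemirovski2003adjunction} and \cite[Chapter 9]{forstnerivc2011}): pass to the orientable double cover, apply the adjunction inequality there, combine with Massey's arithmetic progression, and quote Forstneri\v c's constructions for the converse. The necessity argument you give is correct, with one phrasing issue: $\widehat W$ does not \emph{embed} in $\C^2$ — it is an unbranched double cover of a Stein domain $W\subset\C^2$, hence itself Stein, with $c_1(\widehat W)=\pi^*c_1(W)=0$ because the Stein structure on $\widehat W$ is the pulled-back complex structure. That is all that is needed to conclude $c_1(\widehat W)\cdot[\widehat\Sigma]=0$.

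The last paragraph of your proposal introduces a worry that is not actually there, and in fact the proposed ``fix'' (tubing to spheres, refined adjunction relations) is unnecessary. The set $\{2\chi-4,\,2\chi,\,\dots,\,-2\chi-4+4\lfloor\chi/4+1\rfloor\}$ is \emph{exactly} the intersection of Massey's progression $\{2\chi-4+4k:\ k=0,\dots,2-\chi\}$ with the half-line $e\le -\chi$. Indeed, writing the top of the progression in the theorem as $-2\chi+4\lfloor\chi/4\rfloor$, one checks for $\chi=4q+r$, $r\in\{0,1,2,3\}$, that this equals $2\chi-4+4\lfloor 1-3\chi/4\rfloor$, which is by definition the largest element of Massey's progression that is $\le -\chi$; and it is indeed $\le -\chi$ while the next element $-2\chi+4\lfloor\chi/4\rfloor+4$ exceeds $-\chi$. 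The ``parity constraint $e\equiv 2\chi\pmod 4$'' you invoke is already built into Massey's set — it is not a separate input — so once you have $2e\le -2\chi$ from adjunction on the double cover and the lower half of Massey's range, you are done. No finer obstruction beyond the adjunction bound $e\le-\chi$ is required, and the floor function is pure arithmetic bookkeeping, not evidence of a missing geometric ingredient.

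For the orientable case your reasoning is also fine, though the appeal to Massey is misplaced: Massey's theorem is about non-orientable surfaces, and for orientable $\Sigma\subset\C^2$ one gets $e=[\Sigma]\cdot[\Sigma]=0$ immediately from $H_2(\C^2)=0$. Excluding $(\chi,e)=(2,0)$ then follows from adjunction applied in $W=D(2,0)$ itself: the zero section $S^2$ generates $H_2(W)\cong\Z$, so it is not homotopically trivial in $W$, and $0+0\le -2$ is absurd. The existence direction for all remaining $(\chi,e)$ is, as you say, Forstneri\v c's construction, which can be arranged to produce a smoothly bounded strictly pseudoconvex domain.
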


A special case of these embeddings is given by tubular neighborhoods of totally real surfaces.
Recall that, for a totally real surface $\Sigma \subset \C^2$, the normal Euler number $e$ is equal to $-\chi(\Sigma)$. 
This implies the well-known fact that $\TT^2$ is the only orientable totally real (in particular Lagrangian) surface in $\C^2$.
If $\Sigma$ is non-orientable and totally real, we must have $-\chi \equiv 2\chi \;(\text{mod}\;4)$, i.e $\chi \equiv 0\;(\text{mod}\;4)$.
All of these values are indeed realized by a version of Gromov's h-principle for totally real embeddings due to Kharlamov and Eliashberg, see \cite{forstnerivc1992complex}.

If the surface $\Sigma$ is {\em Lagrangian}, it is a fortiori rationally convex by \cite{duval1995polynomial}.
Using generating functions, Givental \cite{givental1986lagrangian} gave a beautiful construction of Lagrangian embeddings in $\C^2$ for all non-orientable surfaces with non-zero $\chi \equiv 0\;(\text{mod}\;4)$, and he conjectured that the Klein bottle $\K^2$ admits no such embedding.
This was finally settled by Shevchishin (see also \cite{nemirovski2009lagrangian}):
\begin{theorem}{\normalfont (Shevchishin \cite{shevchishin2009lagrangian})}\label{lagrangiankleinbottletheorem}
The Klein bottle does not admit a Lagrangian embedding into $(\R^4,\omega_\std)$.
\end{theorem}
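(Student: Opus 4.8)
The plan is to argue by contradiction, assuming a Lagrangian embedding $\iota\colon \K^2 \hookrightarrow (\R^4,\omega_\std)$, and to extract a contradiction from the theory of pseudoholomorphic disks with boundary on $\iota(\K^2)$. First I would record the relevant soft data: since $\omega_\std = d\lambda_\std$ is exact, $\iota^*\lambda_\std$ is a closed $1$-form on $\K^2$, and it cannot be exact (otherwise $\iota(\K^2)$ would be an exact Lagrangian in $\C^2$, contradicting Gromov's theorem that every closed Lagrangian in $\C^n$ bounds a nonconstant holomorphic disk of positive area), so $[\iota^*\lambda_\std]$ is a nonzero class in $H^1(\K^2;\R)\cong\R$. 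Next I would fix a generic $\omega_\std$-compatible almost complex structure $J$ that is standard outside a large ball; Gromov's theorem then produces a nonconstant $J$-holomorphic disk with boundary on $\iota(\K^2)$, and since $J$ is standard near infinity all such disks of bounded area remain in a fixed compact set, so one may meaningfully speak of disks of \emph{minimal} positive $\omega_\std$-area. (Equivalently one can compactify $\C^2$ to a closed symplectic four-manifold such as $\mathbb{CP}^2$ and work there.)

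The topological heart of the matter is the $2$-torsion in $H_1(\K^2;\Z)\cong \Z\oplus\Z/2$: writing $\pi_1(\K^2)=\langle a,b\mid bab^{-1}=a^{-1}\rangle$, the element $a$ generates the $\Z/2$ factor in homology, and transporting the circle $a$ once around the base circle $b$ reverses its coorientation inside $\K^2$. I would therefore restrict attention to $J$-holomorphic disks $u\colon(D^2,\partial D^2)\to(\C^2,\iota(\K^2))$ of minimal positive area whose boundary represents $\pm a\in\pi_1(\K^2)$, and which (as one can arrange) have Maslov index two. Adding one boundary marked point, the corresponding moduli space $\mathcal M$ has expected dimension two, carries an evaluation map $\mathrm{ev}\colon\mathcal M\to\K^2$, and — by automatic transversality for disks in dimension four — is a smooth surface near its regular locus. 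Gromov compactness applies: sphere bubbling is impossible because $\pi_2(\C^2)=0$, and disk bubbling is excluded by minimality of the area. The outcome I would aim for is a structure theorem saying that the boundaries $\{\,\partial u : u\in\mathcal M\,\}$ sweep out a (possibly singular) foliation of $\K^2$ by embedded circles, each carrying the coorientation determined by the complex orientation of its disk.

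Granting such a structure theorem, the contradiction comes from non-orientability: following the leaves of this coherently cooriented foliation once around the base direction of the Klein bottle reverses the coorientation, which is absurd, and the only escape — degeneration of the family — has already been ruled out by the compactness and bubbling analysis. Packaged numerically, this is a parity mismatch: the degree of $\mathrm{ev}$, which is forced on the holomorphic side and by the $\Z/2$-class of the boundary, cannot equal the relevant self-intersection/Euler number on the $\K^2$ side, and it is precisely the factor $\Z/2\subset H_1(\K^2)$ that makes the two parities disagree.

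The hard part will be establishing the structure theorem for $\mathcal M$ in full rigor: one must control multiply covered disks, disks of larger Maslov index appearing in limits, and non-transverse degenerations, and show that none of these destroys the global foliation picture. This analysis — the core of Shevchishin's proof — is genuinely delicate and far from formal bookkeeping. If it proved intractable, an alternative route I would keep in reserve is to exploit the fact that a Lagrangian $\K^2$ produces a (Weinstein) symplectic filling of its unit cotangent bundle, the flat $3$-manifold $\partial\wt{D}(0,0)$, and then to obstruct such a filling via Seiberg--Witten theory or holomorphic-curve invariants of the filling; but I expect this approach to run into analytic difficulties of comparable severity.
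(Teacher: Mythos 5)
The paper does not contain a proof of this statement: it is cited to Shevchishin \cite{shevchishin2009lagrangian}, with \cite{nemirovski2009lagrangian} noted as an alternative reference, and then used as a black box in $\S$\ref{projectivespacesection} and $\S$\ref{kleinbottlesection}. There is therefore no in-paper argument to compare your attempt against, so let me assess the sketch on its own terms.

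Your main route is a Gromov-style holomorphic-disk foliation argument, and the genuine gap --- which you flag but do not close --- is precisely the ``structure theorem.'' Nothing in your setup forces minimal-area disks with boundary in the class $a$ to have Maslov index two: a Lagrangian Klein bottle in $\C^2$ need not be monotone, so area and Maslov index need not be correlated, and the Maslov class is not a priori controlled. Even if one conditions on Maslov-two disks, there is no reason the boundary circles should be embedded, pairwise disjoint, and sweep out a coherently cooriented foliation of $\K^2$; multiple covers, boundary self-intersections, and failure of $\mathrm{ev}$ to be a submersion all have to be controlled, and that is exactly where the real analytic content lives. The final ``parity mismatch'' is at present a heuristic rather than a computation, and I do not see how to make it one without the foliation picture already in hand. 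Historically this is a genuinely hard point: the problem resisted solution for roughly two decades, an earlier announced proof along holomorphic-curve lines was found to have a gap, and Shevchishin's eventual argument proceeds quite differently, via Lefschetz pencils and combinatorial properties of mapping class groups rather than via a disk foliation of the Lagrangian.

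Your closing alternative --- passing to a Weinstein neighborhood diffeomorphic to $\wt{D}(0,0)$, studying the induced contact structure on the flat torus bundle $\partial\wt{D}(0,0)$, and obstructing the resulting filling/cap decomposition of $\R^4$ --- is much closer in spirit both to Nemirovski's alternative proof \cite{nemirovski2009lagrangian} and to the cut-and-paste argument this paper itself runs in $\S$\ref{kleinbottlesection} (there to obstruct rational convexity of $\wt{D}(0,0)$, taking the present theorem as input, but by the same mechanism: classify the tight contact structures on the boundary torus bundle, swap in a standard symplectic piece, and invoke Gromov--McDuff recognition of $(\R^4,\omega_\std)$ together with signature/homology bookkeeping). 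If you were to develop one of your two routes into a rigorous proof, that is the one I would pursue.
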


\end{section}

\begin{section}{Constructions}\label{constructionssection}

  \begin{subsection}{Legendrian links and Lagrangian cobordisms}\label{non-singular lagrangians}
Recall that a closed curve (possibly with multiple components) in $\R^2_{xz}$ with cusps and without vertical tangencies, as in Figure~\ref{LegFront}, gives rise to a Legendrian link in $(\R^3_{xyz},\alpha_\std = dz - ydx)$ by setting $y = dz/dx$.
Two such curves give rise to Legendrian isotopic links if and only if they are related by a sequence of {\em Legendrian Reideimester moves}, shown in Figure~\ref{ReidemeisterMoves} (see for example \cite{etnyre2005legendrian}).
Similarly, a surface with cusps and without vertical tangencies in $\R^3_{x_1x_2z}$, as in Figure~\ref{WhitneySphere}, gives rise to a Lagrangian surface in $(\R^4_{x_1x_2y_1y_2},\omega_\std = dx_1\wedge dy_1 + dx_2\wedge dy_2)$ by setting $y_1 = dz/dx_1$ and $y_2 = dz/dx_2$ (and forgetting the $z$ component). We call the curve or surface a {\em wavefront} for the corresponding Legendrian or Lagrangian. Note that Figure~\ref{WhitneySphere} corresponds to an immersed Lagrangian sphere with one self-intersection point, known as the Whitney sphere.
\begin{figure}
 \centering
 \includegraphics[scale=.8]{./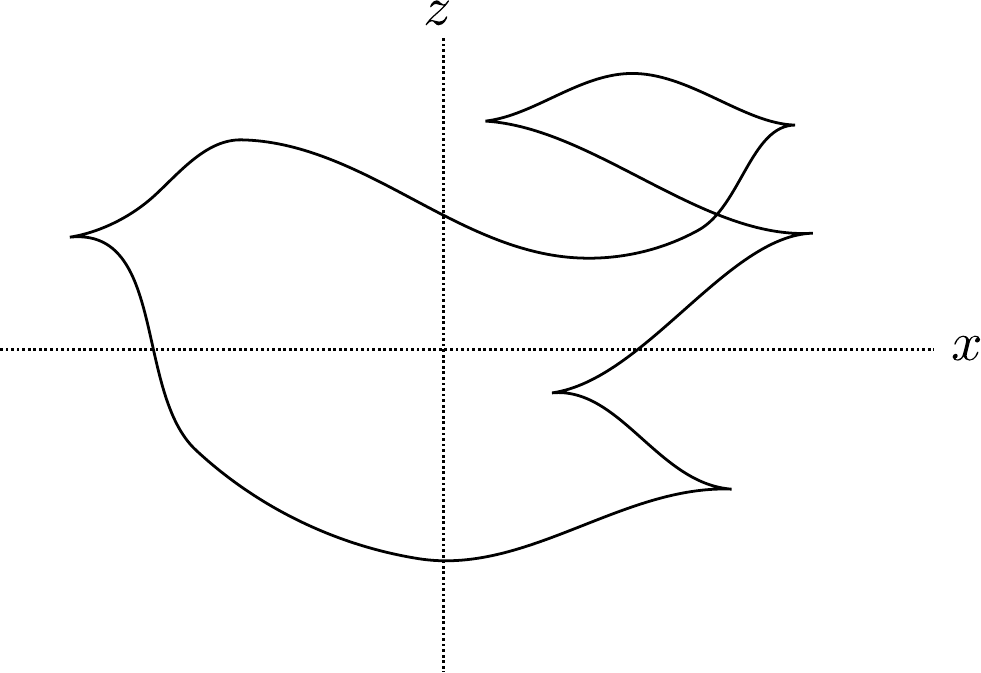}
 \caption{A Legendrian wavefront diagram.}
\label{LegFront}
\end{figure}  
\begin{figure}
 \centering
 \includegraphics[scale=.8]{./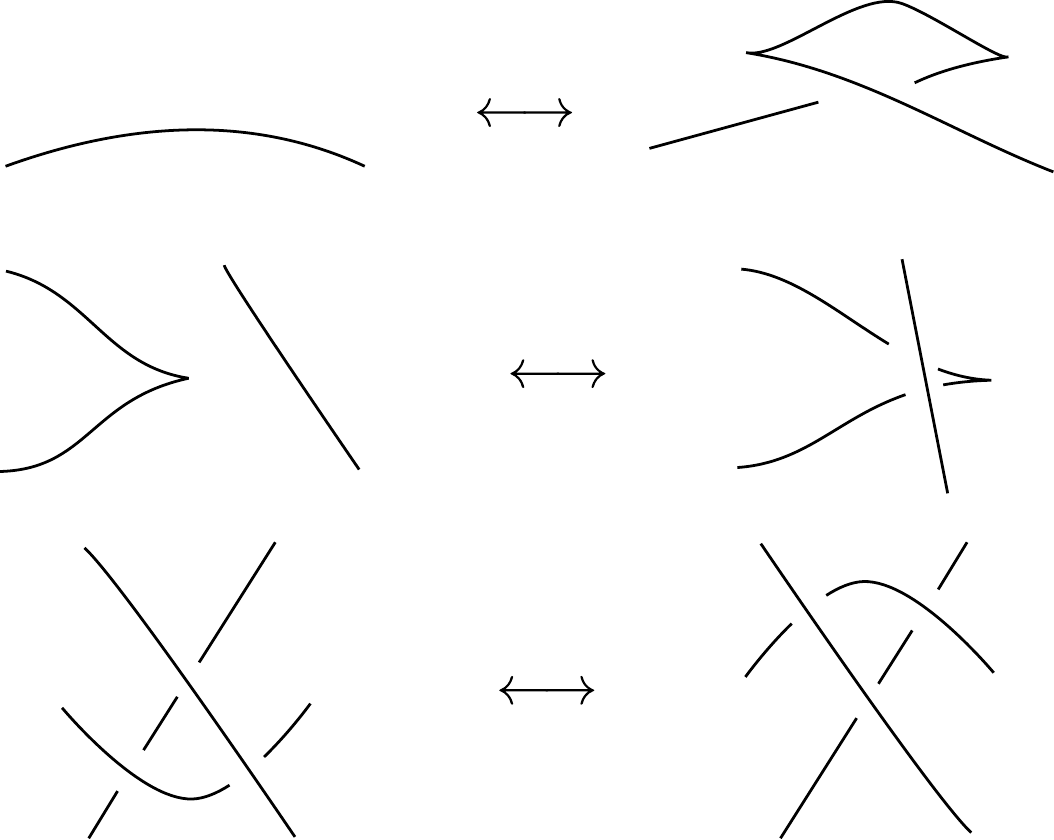}
 \caption{Legendrian Reidemeister moves (we also include the 180 degrees rotations of each of these diagrams about the coordinate axes).}
\label{ReidemeisterMoves}
\end{figure}  
\begin{figure}
 \centering
 \includegraphics[scale=.8]{./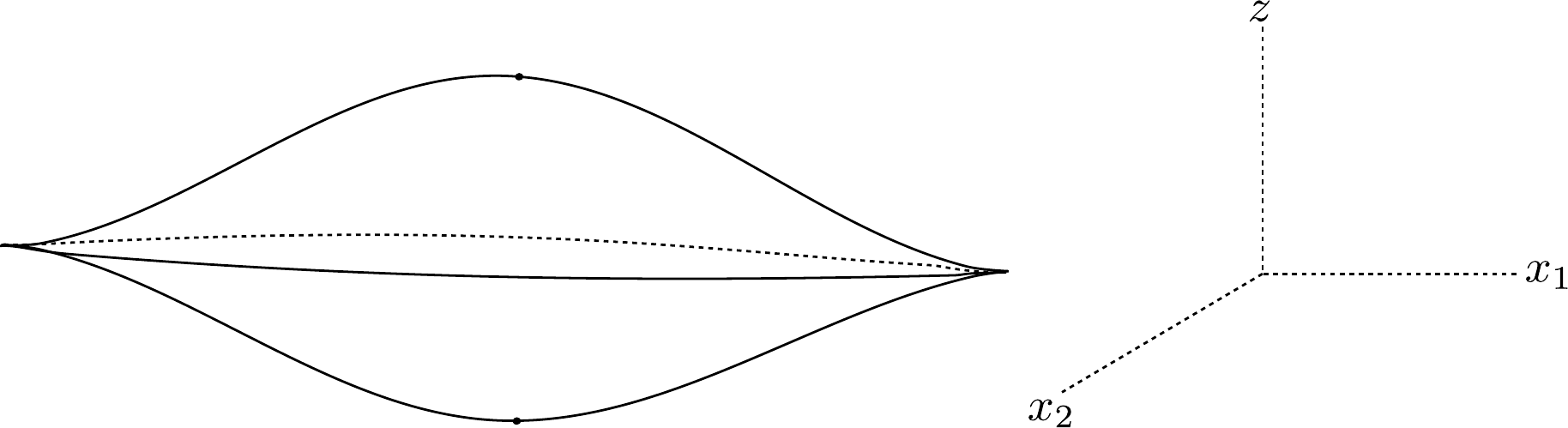}
 \caption{A wavefront diagram for a Lagrangian Whitney sphere.}
\label{WhitneySphere}
\end{figure} 

For later use, we recall two basic tools for constructing Lagrangian cobordisms via Legendrian knot theory.
Let $\R_t \times \R_{xyz}^3$ denote the symplectization of $(\R^3,\xi_\std)$, endowed with its natural symplectic form $\omega_s := d(e^t(dz - ydx))$.
Although we state the following results in terms of symplectizations, they can easily be translated into results about Lagrangians in $(\R^4_{x_1y_1x_2y_2}, \omega_\std)$ using the symplectomorphism 
\begin{align*}
&\Phi: \R \times \R^3 \rightarrow \R_+^4 = \{(x_1,y_1,x_2,y_2)\;:\; x_2 > 0\}\\
&\Phi(t,x,y,z) = (x,e^ty,e^t,z).
\end{align*}
For Legendrian links $L_1,L_2 \subset (\R^3,\xi_\std)$, 
a {\em Lagrangian cobordism from $L_1$ to $L_2$} is a compact Lagrangian submanifold $\Sigma \subset [a,b] \times \R^3 \subset \R \times \R^3$ which coincides with $[a,b] \times L_1$ on $[a,b] \times \R^3$ and with $[c,d] \times L_2$ on $[c,d] \times \R^3$, for some $a < b < c < d$.

\begin{enumerate}
\item Suppose $L_1,L_2 \subset (\R^3,\xi_\std)$ are Legendrian links which are Legendrian isotopic. Then there is a Lagrangian cobordism from $L_1$ to $L_2$  which is diffeomorphic to $\R \times [0,1]$. (See \cite[Theorem 1.2]{chantraine2010lagrangian} or \cite[\S 4.2.3]{eliashberg1998lagrangian}.)

\item Suppose $L \subset (\R^3,\xi_\std)$ is a Legendrian link, and $L'$ another Legendrian link whose wavefront diagram is obtained from that of $L$ by connecting two inward facing cusps as in Figure ~\ref{CuspConnectedSum}. Then there is a Lagrangian cobordism from $L$ to $L'$ which is diffeomorphic to the result of attaching a one-handle to $L \times [0,1]$ along the two cusps (see for example \cite{ekholm2012legendrian}).
\end{enumerate}

We note in (2) that if the two cusps lie on the same connected component of $L$, the framing on the one-handle is compatible with an orientation on $L$ if and only if the local orientations near the cusps point in the opposite vertical directions (i.e. one up and one down). Otherwise, the one-handle is disorienting and the resulting cobordism necessarily is non-orientable.
In particular, if $L$ is connected, the cobordism is diffeomorphic to either a real projective plane with two disks removed or a two-sphere with three disks removed.

\begin{figure}
 \centering
 \includegraphics[scale=.8]{./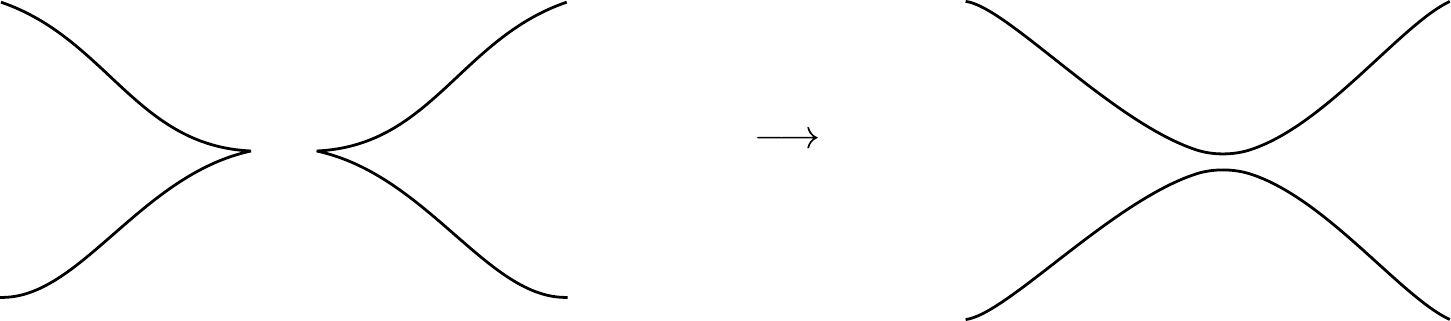}
 \caption{A local picture showing a cusp connected sum of two Legendrians, which gives rise to an elementary Lagrangian cobordism.}
\label{CuspConnectedSum}
\end{figure} 

\end{subsection}

\begin{subsection}{Cones over Legendrian knots}\label{cones over Legendrians}
For a Legendrian knot\footnote{We reserve the term knot for links with a single connected component.} $L \subset (S^3,\xi_\std)$, let $\cone(L) \subset (B^4,\omega_\std)$ be given by
\begin{align*}
  \cone(L) = \{tx\;:\; x \in L, \;0 \leq t \leq 1\}.
\end{align*}
Note that $\cone(L) \setminus \{0\}$ is Lagrangian.
\begin{definition}
We say that $\Sigma \subset \C^2$ has a singularity at a point $p \in \Sigma$ {\em modeled on $\cone(L)$} if $p$ has a small neighborhood $U$ in $\C^2$ which is K\"ahler isomorphic to the ball centered at the origin of radius $r$ in $(\C^2,\omega_\std)$, for some small $r > 0$, and such that $U \cap \Sigma$ is mapped to 
$r\cone(L') = \{tx\;:\; x \in L', \; 0 \leq t \leq r\}$, where $L' \subset (S^3,\xi_\std)$ is a Legendrian knot which is Legendrian isotopic to $L$.
We will sometimes refer to $U$ as a {\em model neighborhood} of the singularity at $p$.
\end{definition}

\begin{lemma}\label{surroundinglemma}
Let $\Sigma \subset \C^2$ be an embedded Lagrangian apart from a finite list of singularities modeled on $\cone(L_1),...,\cone(L_n)$, where $L_1,...,L_n \subset (S^3,\xi_\std)$ are Legendrian unknots.
Then there exists a strictly pseudoconvex domain in $\C^2$ with rationally convex closure diffeomorphic to the disk bundle over $\Sigma$ with Euler number
\begin{align}
  e = -\chi(\Sigma) + \sum_{i=1}^n(\tb(L_i) + 1). \label{eformula}
\end{align}
\end{lemma}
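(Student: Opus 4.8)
The plan is to build the domain by attaching handles to a tubular neighborhood of a smoothed version of $\Sigma$, using Proposition~\ref{handleattachmentproposition} to preserve the existence of a K\"ahler extension at every stage. First I would address the singularities one at a time. Near a singular point $p$ modeled on $\cone(L_i)$ with $L_i$ a Legendrian unknot, work in the model neighborhood $U \cong B^4_r$. The cone $r\cone(L_i')$ over a Legendrian unknot is, up to Legendrian isotopy of $L_i'$, the cone over the standard Legendrian unknot with maximal Thurston--Bennequin invariant $\tb=-1$, but we should retain a general unknot to track the $\tb(L_i)+1$ contribution. The key local observation is that the cone over a Legendrian unknot can be capped off: truncating the cone at a small radius $\epsilon$ leaves $\Sigma$ minus a small disk around $p$, together with a copy of $L_i$ sitting as a Legendrian on the sphere $S^3_\epsilon = \bdy B^4_\epsilon$; since $L_i$ is an unknot it bounds a Lagrangian disk in the complementary ball $B^4_\epsilon$ (the cone itself, or a perturbation thereof), and this exhibits the germ of $\Sigma$ at $p$ as obtained from a smooth Lagrangian disk by a ``cone'' modification whose effect on the normal Euler number of the resulting closed surface is precisely $\tb(L_i)+1$. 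Concretely, I would first replace $\Sigma$ near each $p_i$ by the smooth Lagrangian disk bounded by $L_i$ inside $B^4_\epsilon$, producing an honest embedded (immersed-free, since $\Sigma$ was embedded away from the $p_i$ and the disks are small) Lagrangian surface $\Sigma_0$; a totally real tubular neighborhood of $\Sigma_0$ is a strictly pseudoconvex domain with rationally convex closure by Duval--Sibony \cite{duval1995polynomial}, i.e.\ it carries a strictly plurisubharmonic defining function whose $dd^c$ extends K\"ahler on $\C^2$.

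Next I would compute the Euler number of this initial neighborhood and then correct it. Since $\Sigma_0$ is Lagrangian (hence totally real), its normal Euler number equals $-\chi(\Sigma_0)$. But $\Sigma_0$ and $\Sigma$ are diffeomorphic (we only replaced punctured neighborhoods of points by disks), so $\chi(\Sigma_0)=\chi(\Sigma)$, giving normal Euler number $-\chi(\Sigma)$ for the tubular neighborhood $D\Sigma_0$. The discrepancy with \eqref{eformula} is the term $\sum_i(\tb(L_i)+1)$, which I would now install by handle attachments in reverse: for each $i$, I would undo the capping, i.e.\ reattach the cone structure at $p_i$, by attaching a suitable isotropic handle. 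More precisely, the local model shows that $r\cone(L_i)$ is obtained from the trivial Lagrangian disk plus the collar $[\epsilon,r]\times L_i$, and Proposition~\ref{handleattachmentproposition} applied with $\Delta$ a Lagrangian disk complex-orthogonally attached along the Legendrian $L_i \subset S^3_\epsilon$ (which is isotropic and on which $\omega$ restricts to zero) changes the topology of the domain by a $2$-handle whose framing relative to the surface framing differs from the Legendrian framing by $\tb(L_i)$; since the unknot bounds, this $2$-handle attachment along $\Sigma_0 \subset D\Sigma_0$ precisely modifies the normal Euler number by $\tb(L_i)+1$, while keeping the K\"ahler extension (Remark~\ref{extensionremark} lets us keep agreement with $\omega_\std$ at infinity). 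Doing this for $i=1,\dots,n$ yields the disk bundle over $\Sigma$ with Euler number $-\chi(\Sigma) + \sum_i(\tb(L_i)+1)$ as a strictly pseudoconvex, rationally convexly embedded domain.

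The main obstacle I anticipate is verifying the two local claims rigorously: (a) that the germ of $\Sigma$ at a singularity modeled on $\cone(L_i)$ with $L_i$ an unknot can be matched, inside a single model ball, with the germ of the smooth Lagrangian disk bounded by $L_i$ together with a ``cone correction'', in a way compatible with the hypotheses of Proposition~\ref{handleattachmentproposition} (real analyticity of $\Delta$, complex-orthogonal attachment along $\bdy\Delta$, and $\omega|_\Delta \equiv 0$); and (b) the bookkeeping that the net change in normal Euler number from each such correction is exactly $\tb(L_i)+1$. For (b), the cleanest route is to compute both sides by capping: glue the disk bundle $D(\chi,e)$ abstractly and recognize $e$ as a relative self-intersection of a pushed-off copy of $\Sigma$; the difference between the Lagrangian (totally real) framing $-\chi$ and the framing obtained after the cone modification is, near each $p_i$, the rotation number of the normal framing around $L_i$, which is $\tb(L_i)+1$ by the standard formula relating the Thurston--Bennequin invariant to the framing of a Lagrangian filling (see \cite{chantraine2010lagrangian,ekholm2012legendrian}). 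I would also need to check that the handle attachments for different $i$ do not interfere, which is automatic since they take place in disjoint model neighborhoods, and that the hypothesis ``$L_i$ is an unknot'' is exactly what guarantees each $\Delta$ exists as an embedded Lagrangian disk; for a non-trivial knot one would not be able to cap and the lemma would fail.
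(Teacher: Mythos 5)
Your plan hinges on the claim that, since each $L_i$ is a Legendrian unknot, it ``bounds a Lagrangian disk in the complementary ball $B^4_\epsilon$ (the cone itself, or a perturbation thereof),'' and you use this both to build the smooth initial surface $\Sigma_0$ and to justify the handle $\Delta$. This is false whenever $\tb(L_i) < -1$. An embedded Lagrangian filling of a Legendrian $L$ in $(B^4,\omega_\std)$ satisfies $\tb(L) = -\chi(\text{filling})$, so a Legendrian unknot bounds an embedded Lagrangian disk if and only if $\tb = -1$; the interesting singularities here (e.g.\ $\cone(L_u)$ with $\tb(L_u) = -2$) are precisely the ones that cannot be capped. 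The cone itself is not a smooth disk (it has a conical apex), and no Lagrangian perturbation makes it one. This is not incidental: the paper devotes a whole subsection to the observation that $L_u$ does \emph{not} bound a smooth Lagrangian disk or even Möbius strip in $B^4$, and builds a non-exact Möbius strip only as a limit. So $\Sigma_0$ does not exist and Duval--Sibony cannot be applied to it. The secondary step of ``reattaching the cone'' is also problematic: attaching a $2$-handle to a tubular neighborhood of a surface changes its homotopy type and does not produce another disk bundle over the same surface, so the Euler-number bookkeeping has nothing to attach to.

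The paper's proof goes the other way around and thereby avoids this obstruction entirely. It triangulates $\Sigma$ with the singular points among the $0$-cells, starts with a disjoint union of small round balls $U_i$ around all $0$-cells (automatically rationally convex with the standard Kähler form, no Lagrangian filling needed), invokes Lemma~\ref{realanalyticitylemma} to make $\Sigma\setminus U$ real analytic and complex-orthogonal to $\partial U$, and then grows the domain outward by applying Proposition~\ref{handleattachmentproposition} along the $1$- and $2$-cells of the triangulation, which are isotropic pieces of $\Sigma$. The smooth disk $D_i\subset U_i$ appearing in the paper is a merely smooth (not Lagrangian) disk used only to identify the diffeomorphism type of the resulting domain as a disk bundle; the Euler number is then computed by extending a normal vector field that equals the contact framing along each $L_i$ over $D_i$, picking up $\tb(L_i)$ zeroes per singularity and $-\chi+n$ zeroes from $\Sigma\setminus U$. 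Your arithmetic for the final formula is consistent with this count, but the geometric construction you propose does not get off the ground because the Lagrangian caps you need do not exist.
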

Recall that the Thurston--Bennequin number $\tb(L)$ of a Legendrian $L \subset S^3$ is defined to be the self-linking number of $L$ with respect to the contact framing. It can be computed from a wavefront diagram by the formula $\tb = \text{writhe} - \frac{1}{2}\#\text{cusps}$, where writhe is a certain signed count of crossings (see \cite[\S 2.6]{etnyre2005legendrian}).

Before proving Lemma~\ref{surroundinglemma}, we need a technical lemma to achieve the real analyticity condition of Proposition~\ref{handleattachmentproposition}.

\begin{lemma}\label{realanalyticitylemma}
Let $\Sigma \subset \C^2$ be as in Lemma~\ref{surroundinglemma}, and let $\Sigma'$ denote the surface obtained by removing $n$ open disks around the singularities from $\Sigma$.
Then we can find disjoint open balls $U_1,...,U_n \subset \C^2$ and a real analytic Lagrangian embedding $\wt{\Sigma} \subset \C^2 \setminus \left(U_1 \cup ... \cup U_n \right)$, smoothly isotopic to $\Sigma'$, such that 
\begin{enumerate}
\item for $1 \leq i \leq n$, $\wt{\Sigma} \cap \bdy U_i$ is a real analytic Legendrian in $\bdy U_i$ which is Legendrian isotopic to $L_i$ (viewed as living in $\bdy U_i$)
\item $\wt{\Sigma}$ is complex orthogonal to the boundaries of the balls.
\end{enumerate}
\end{lemma}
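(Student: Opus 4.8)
The plan is to argue that the smoothly embedded Lagrangian $\Sigma'$ — which is properly embedded in $\C^2$ with cylindrical Legendrian ends $L_i$ near each excised singularity — can be perturbed to a real analytic Lagrangian with real analytic Legendrian boundary, meeting the boundary spheres complex-orthogonally. First I would set up precise normal forms near the singularities. By the definition of a singularity modeled on $\cone(L_i)$, inside a model neighborhood the surface already looks like the cone $r\cone(L_i')$ over a Legendrian $L_i'$ Legendrian isotopic to $L_i$; shrinking slightly and using the Legendrian isotopy (and the fact that Legendrian knots admit real analytic representatives, e.g.\ by approximating a wavefront by an analytic one using a Legendrian version of Weierstrass approximation, keeping the front generic), I would first isotope $\Sigma$ through Lagrangians so that near each singularity the surface coincides with the cone over a \emph{real analytic} Legendrian unknot. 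Then removing the balls $U_i$ of radius slightly less than $r$ produces a surface whose intersection with each $\bdy U_i$ is already a real analytic Legendrian Legendrian isotopic to $L_i$, and whose collar is the real analytic cone $\{tx : x\in L_i',\ \rho\le t\le r\}$ — in particular complex-orthogonal to $\bdy U_i$, since the cone direction is the radial direction, which is $J$-orthogonal to the contact hyperplane field on the sphere. This handles the behavior near the boundary; call this modified surface $\Sigma''$, which agrees with the desired collar structure near $\bdy U_i$.

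Next I would approximate the interior. The complement $\Sigma'' \setminus (\bigcup_i \overline{U_i})$ is a compact smooth Lagrangian submanifold with (real analytic) Legendrian boundary on the $\bdy U_i$. The standard tool here is the Weinstein Lagrangian neighborhood theorem combined with relative real analytic approximation: a $C^1$-small perturbation of a Lagrangian is obtained as the graph of the differential $dh$ of a small function $h$ on $\Sigma''$ (in Weinstein coordinates $T^*\Sigma'' \hookrightarrow \C^2$), and such a perturbation stays Lagrangian automatically. Since $\Sigma''$ is compact with boundary and the ambient structure is real analytic near that boundary, I can choose $h$ real analytic (approximating the zero function, or rather using an analytic chart structure on a neighborhood), vanishing together with its first derivatives near $\bdy \Sigma''$ so that the perturbation does not disturb the already-analytic collar, and small enough that the result remains embedded and disjoint from the $U_i$. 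Real analyticity of the Weinstein identification near the analytic pieces, plus the Grauert–Morrey theorem (real analytic approximation of smooth functions on a real analytic manifold, here done relatively), gives a real analytic Lagrangian $\wt\Sigma$, smoothly isotopic to $\Sigma'$ (the isotopy being the concatenation of the Lagrangian isotopy near the singularities and the small graph-type perturbation), satisfying (1) and (2).

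The main obstacle I expect is the relative/real-analytic bookkeeping: making the interior approximation real analytic \emph{while} keeping it exactly equal to the prescribed real analytic cone in a neighborhood of each $\bdy U_i$, and simultaneously preserving the Lagrangian condition and embeddedness. The Lagrangian condition is the one that could be delicate — a naive real analytic $C^\infty$-approximation of $\Sigma''$ as a submanifold need not be Lagrangian — which is why the argument must be routed through approximating the \emph{generating function} $h$ on $\Sigma''$ rather than the submanifold directly; then Lagrangianness is automatic and only $C^1$-smallness (for embeddedness) and the matching condition along $\bdy \Sigma''$ need care. A secondary technical point is verifying that a real analytic Legendrian unknot Legendrian isotopic to $L_i$ actually exists and that the Legendrian isotopy can be realized by an ambient Lagrangian isotopy of $\Sigma$ supported in the model neighborhood — but this follows from the existence of Lagrangian cobordisms associated to Legendrian isotopies recalled in item (1) of \S\ref{non-singular lagrangians}, together with elementary real analytic approximation of wavefronts.
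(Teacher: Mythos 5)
Your overall structure (normalize near the singularities, then approximate the compact interior through a Weinstein neighborhood by a small graph) is reasonable, and you correctly identify that the delicate point is making the result real analytic \emph{and} Lagrangian simultaneously. But your proposed resolution has a genuine gap: you try to take the graph of (the differential of) a ``real analytic'' function $h$ over $\Sigma''$ inside a Weinstein neighborhood $T^*\Sigma'' \hookrightarrow \C^2$, and conclude the image is a real analytic submanifold of $\C^2$. This does not follow. The base $\Sigma''$ is only a \emph{smooth} Lagrangian in $\C^2$, and the Weinstein identification is also only smooth. ``Real analytic'' has no invariant meaning for a function on a merely smooth submanifold, and even if you fix an abstract analytic atlas on $\Sigma''$, the graph of $dh$ pushed into $\C^2$ through a smooth embedding will generically be no better than smooth. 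The phrase ``using an analytic chart structure on a neighborhood'' is exactly where the argument breaks down: you need the \emph{zero section} and the Weinstein embedding to be real analytic as objects in $\C^2$, not just abstractly.

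The paper's proof supplies precisely the missing ingredient: it first constructs an auxiliary real analytic Lagrangian $\wt S_+$ that is $\calC^\infty$-close to (a slight enlargement $S_+$ of) the smooth surface, and only then graphs over $\wt S_+$. Producing $\wt S_+$ is itself nontrivial and involves a chain you omit entirely: a $\calC^\infty$-small ambient diffeomorphism $\phi$ making $S' := \phi^{-1}(S_+)$ real analytic (so $S'$ is Lagrangian for $\phi^*\omega_\std$, not $\omega_\std$), real analytic approximation of $\phi^*\lam_\std$ by a $1$-form $\beta$ tangent to $S'$ in the right sense, and then a Moser argument interpolating between $d\beta$ and $\omega_\std$ to push $S'$ to a genuinely $\omega_\std$-Lagrangian real analytic $\wt S_+$. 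Once $\wt S_+$ exists, the paper views the original $S$ as the graph of a closed $1$-form $\theta$ in the (now real analytic) Weinstein neighborhood of $\wt S_+$, splits $\theta = \wt\theta + df$ with $\wt\theta$ real analytic and closed, and applies a \emph{relative} real analytic approximation theorem for functions (with prescribed $2$-jet along the boundary) to $f$. Two further points where your sketch is imprecise compared to this: (i) a $C^1$-small Lagrangian perturbation is the graph of a \emph{closed}, not necessarily exact, $1$-form, so using only $dh$ implicitly assumes the class is zero -- the paper handles the nonexact part separately via $\wt\theta$; (ii) the boundary-matching condition needs a real analytic approximation theorem with prescribed jets along $\bdy\Pi(\Sigma')$, which is stronger than a generic Grauert--Morrey approximation and is the reason the paper cites the specific relative version (Theorem 5.53 of Cieliebak--Eliashberg).
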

\begin{proof}

Firstly, Corollary 6.25 in \cite{cieliebak2012stein} states that each $L_i$ is Legendrian isotopic to a real analytic Legendrian $\wt{L}_i \subset \bdy U_i$.
Using the results of $\S$\ref{non-singular lagrangians}, we can therefore find a (not necessarily real analytic) Lagrangian $S \subset \C^2 \setminus \left( U_1\cup ... \cup U_n\right)$ satisfying the two conditions of the lemma.

Let $S_+ \supset S$ be a slight extension of $S$, still compact, which includes a finite part of $\cone(\wt{L}_i) \subset U_i$ for each $i$.
Following a similar outline to the proof of Corollary 6.25 in \cite{cieliebak2012stein}, we can find a real analytic Lagrangian $\wt{S}_+$ which is $\calC^\infty$-close to $S_+$.
In more detail, let $\phi: \C^2 \rightarrow \C^2$ be a $\calC^\infty$-small diffeomorphism such that $S' := \phi^{-1}(S_+)$ is a real analytic submanifold.
Note that $S'$ is Lagrangian with respect to $\phi^*\omega_\std$,
and $\phi^*\lam_\std$ induces a smooth (but not necessarily real analytic) section of the real analytic vector bundle $T^*\C^2 \rightarrow \C^2$, where $\lam_\std$ is a standard primitive $1$-form for $\omega_\std$.
Let $\beta$ be a real analytic closed $1$-form on $S'$ which is $\calC^\infty$-close to the closed $1$-form $(\phi^*\lam_\std)|_{S'}$.
Since $TS' \subset T\C^2|_{S'}$ is a real analytic subbundle, we can extend $\beta$ to a real analytic $1$-form on $\C^2$ (still denoted by $\beta$) which is $\calC^\infty$-close to $\phi^*\lam_\std$. In particular, we can assume that  $d\beta$ is non-degenerate. By construction, $S'$ is Lagrangian with respect to $d\beta$ and $d\beta$ is real analytic.

We can now apply Moser's theorem to the family of real analytic symplectic forms $\omega_t := (1-t)d\beta + t\omega_\std$,
noting that the time dependent vector field we must integrate in order to perform Moser's trick is in this case real analytic and $\calC^\infty$-small.
The result is a family $\phi_t: \C^2 \rightarrow \C^2$ of $\calC^\infty$-small real analytic diffeomorphisms such that $\phi_t^*\omega_t = d\beta$. Then 
$\wt{S}_+ := \phi_1(S')$ is real analytic, $\calC^\infty$-close to $S_+$, and Lagrangian with respect to $\omega_\std$.
  
Now since $\wt{S}_+$ is both real analytic and Lagragian, we can find a Weinstein neighborhood which is real analytic (with respect to the natural real analytic structure on $T^*\wt{S}_+$ which makes the canonical symplectic form real analytic). 
In particular, we can view $S$ as a Lagrangian submanifold of $T^*\wt{S}_+$ with real analytic boundary. Since $S$ is also $\calC^\infty$-close to the zero section $\wt{S}_+ \subset T^*\wt{S}_+$, we can view it as the graph of a closed $1$-form $\theta$ defined on the image of $S$ under the projection $\Pi: T^*\wt{S}_+ \rightarrow \wt{S}_+$.
Assume we have a decomposition $\theta = \wt{\theta} + df$, where $\wt{\theta}$ is a real analytic closed $1$-form and $f: \Pi(S) \rightarrow \R$ is a smooth function (this follows for example by Hodge theory). 
By Theorem 5.53 of \cite{cieliebak2012stein}, we can a find real analytic approximation $\wt{f}$ of $f$ which has the same $2$-jet as $f$ along $\bdy\Pi(\Sigma')$. Then $\wt{\Sigma} := \text{graph}(\wt{\theta} + d\wt{f})$ satisfies the two conditions of the lemma (since $S$ does) and is real analytic and Lagrangian by construction.
\end{proof}

\sss

\noindent \textit{Proof of Lemma~\ref{surroundinglemma}.} 
Using a triangulation of $\Sigma$ such that each singular point is a $0$-cell, we can start with a rationally convex neighborhood $U$ consisting of a small ball $U_i$ around each $0$-cell (see \cite{nemirovski2008finite}),
and by Lemma~\ref{realanalyticitylemma} we can assume the complementary part of $\Sigma$ is real analytic and complex orthogonal to the boundary of $U$.
Now use Proposition~\ref{handleattachmentproposition} to surround the $1$-cells and $2$-cells. Let $V$ denote the resulting domain in $\C^2$.

Observe that $V$ is diffeomorphic to a disk bundle over $\Sigma$. Indeed, as a smooth manifold we can identify $V\setminus U$ with a smooth disk bundle over $\Sigma \setminus U$. Since each singularity is modeled on a cone over a smooth unknot, we can find a smoothly embedded disk $D_i \subset U_i$ which coincides with $\Sigma$ near $\bdy U_i$. We can therefore smoothly identify $U_i$ with a disk bundle over $D_i$ in such a way that the disk bundle structures on $U$ and $V \setminus U$ fit together to give $V$ the structure of a disk bundle over $\Sigma$.

To compute the Euler number $e$ of the resulting disk bundle, pick a vector field on $\Sigma \setminus U$ which has $\chi(\Sigma) - n$ non-degenerate zeroes (counted with appropriate signs) and is tangent to each $L_i$.
Equivalently, using complex multiplication to identify the tangent bundle with the normal bundle, we find a section $v$ of the normal bundle of $\Sigma \setminus U$,
with $-\chi + n$ zeroes\footnote{The sign discrepancy arises because, for an oriented Lagrangian plane $P \subset \C^2$, we have $P \oplus iP = \C^2$, but the induced orientation on $P \oplus iP$ as a direct sum is the opposite of the orientation on $\C^2$ as a complex vector space.}, such that $v$ gives the contact framing along each $L_i$. 
Using the definition of $\tb$ as a linking number, we observe that each $L_i$ bounds a smooth disk in its corresponding ball, and $v$ extends over this disk with $\tb(L_i)$ zeroes. Since the count of zeroes of $v$ computes $e$, ~(\ref{eformula}) follows. $\hfill\Box$
\begin{remark}\label{smoothdiskremark}
Note that the cone over the trivial Legendrian knot (with $\tb = -1$ and rotation number $\rot = 0$) is equivalent to just a smooth Lagrangian disk.
More precisely, if $\Sigma \subset \C^2$ has a singularity modeled on the cone over the trivial Legendrian knot, we can remove from $\Sigma$ a small model neighborhood of the singularity and glue in a smoothly embedded Lagrangian disk. Here we are relying on (1) from $\S$\ref{non-singular lagrangians} to know that only the Legendrian isotopy class is relevant, together with the fact that the boundary of any Lagrangian plane in $(B^4,\omega_\std)$ is, up to Legendrian isotopy, the trivial Legendrian knot.
\end{remark}

\begin{remark}\label{SplittingSingularitiesRemark}
Let $L_u$ be the Legendrian unknot with $\tb = -2$ and $\rot = \pm 1$, as in Figure~\ref{umbrella}. 
The cone over this Legendrian can be thought of as a fundamental building block for more complicated singularities of Lagrangian surfaces. 
Indeed, by the Eliashberg--Fraser theorem \cite{eliashberg2009topologically} any Legendrian unknot $L$ in $(S^3,\xi_\std)$ can be viewed as a connected sum of $n= -1 - \tb(L)$ copies of $L_u$. 
Using $\S$\ref{non-singular lagrangians}(1), this shows that there is a Lagrangian disk in $(B^4,\omega_\std)$ with boundary $L \subset S^3$ which is smoothly embedded apart from $n$ $\cone(L_u)$ singularities.
That is, we can always replace a $\cone(L)$ singularity by $n$ $\cone(L_u)$ singularities.  
\end{remark}
\begin{figure}
 \centering
 \includegraphics[scale=.8]{./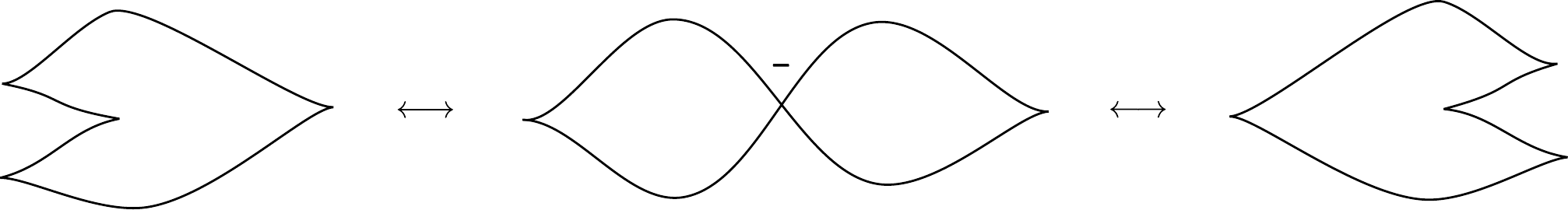}
 \caption{Three different wavefront diagrams of the Legendrian unknot $L_u$ with $\tb = -2$ and $\rot = \pm 1$.}
\label{umbrella}
\end{figure}

\end{subsection}

\begin{subsection}{The cone over $L_u$ and its non-exact smoothing}

Our goal in this subsection is to explain how a $\cone(L_u)$ singularity on a {\em non-orientable} surface can be ``smoothed'', removing a small neighborhood of the singular point and gluing in a smoothly embedded Lagrangian M\"obius strip.
A natural first guess is that $L_u$ bounds a smoothly embedded Lagrangian M\"obius strip in $(B^4,\omega_\std)$, which would certainly give such a smoothing procedure.
This turns out to be false, as can be seen for example using Legendrian contact homology (see \cite{ekholm2012rational} for the relevant results in our context).
Namely, such a Lagrangian filling would necessarily be exact, since the first real homology group of the M\"obius strip is generated by its boundary cycle.
This implies that the contact homology differential graded algebra of $L_u$ admits an augmentation. On the other hand, $L_u$ is a stabilized Legendrian knot, and the differential graded algebra is known to be trivial for stabilized Legendrians. In particular, it admits no augmentation.

Nevertheless, this guess is actually very close to being true: we can find a Lagrangian M\"obius strip in $(B^4,\omega_\std)$ whose (non-Legendrian) boundary is arbitrarily $\calC^\infty$-close to $L_u$.
Explicitly, for $A > 0$ let $\Gamma_A: \R^2 \rightarrow \C^2$ be given by
\begin{align*}
\Gamma_A(s,T) = A\left(-\dfrac{i}{\sqrt{2}}\sqrt{1+T^2} e^{2 i s}, T e^{-i s}\right).
\end{align*}
This is, up to a change of variables, a special case of a family of Lagrangian immersions discovered by Castro--Lerma, denoted by $\Upsilon_{1,2}
$ in \cite{castro2010hamiltonian}.
A straightforward computation reveals that
\begin{itemize}
\item $\Gamma_A(s+\pi,-T) = \Gamma_A(s,T)$, and $\Gamma_A$ descends to a Lagrangian embedding of the unbounded M\"obius strip $(\R \times \R)/\left((s,T)\sim (s+\pi,-T)\right)$.
\item For $A < \sqrt{2}$ and $T_A := \sqrt{\dfrac{2}{3}\left(\dfrac{1}{A^2} - \dfrac{1}{2}\right)}$, the image $\Gamma_A([0,\pi)\times[-T_A,T_A])$ is a Lagrangian M\"obius strip in the unit ball with boundary transverse to the standard contact structure on the unit sphere. 
\item As $A \rightarrow 0$, the image $\Gamma_A(\R \times (\R \setminus \{0\}))$ converges in $\calC^\infty$ to the Lagrangian cone over a fixed parametrization of the Legendrian knot $L_u$.
\end{itemize}
To clarify the last point, we can perform the rescaling $T \mapsto T/A$ (which does not affect the image) and, for $T \neq 0$, the parametrization
\begin{align*}
\Gamma_A(s,T/A) =\left(-\dfrac{i}{\sqrt{2}}\sqrt{A^2+T^2} e^{2 i s}, T e^{-i s}\right).
\end{align*}
converges to 
\begin{align*}
G(s,T) = \left(-\dfrac{i}{\sqrt{2}}|T| e^{2 i s}, T e^{-i s}\right).
\end{align*}
Here $G(s,T)$ parametrizes the Lagrangian cone over a Legendrian unknot $L$, with $L$ parametrized by
\begin{align*}
s \mapsto \left( -\frac{i}{\sqrt{3}}e^{2 i s}, \frac{\sqrt{2}}{\sqrt{3}}e^{-i s}\right),\;\;\;\;\; s \in [0,2\pi).
\end{align*}

To identify this Legendrian as $L_u$ (up to Legendrian isotopy), 
observe that $L$ bounds the surface 
\begin{align*}
\{(z,w) \in \C^2\;:\; 2z = -i\sqrt{3}\ovl{w}^2\}
\end{align*}
with the orientation opposite to that induced by the projection to the $w$-axis. 
This surface is a disk with one negative hyperbolic complex point at the origin, see \cite[\S 9.2]{forstnerivc2011}. For such a surface $S$ in the unit ball with Legendrian boundary and generic complex points, there is a relative version of Lai's formulas \cite{lai1972characteristic} that is obtained by a suitable modification of the arguments in \cite[\S 9.4]{forstnerivc2011}. Namely, 
\begin{align*}
\tb(\bdy S) + \chi(S) = e_+ + e_- - h_+ - h_-\\
\rot(\bdy S) = e_+ - e_- - h_+ + h_-,
\end{align*}
where $e_\pm$ and $h_\pm$ are the numbers of positive/negative elliptic and hyperbolic complex points on $S$. It follows that $\tb(L) = -2$ and $\rot(L) = 1$.
Hence $L$ is Legendrian isotopic to $L_u$ by the Eliashberg--Fraser theorem \cite{eliashberg2009topologically}.

Figure~\ref{MobiusWavefront} shows a wavefront illustration of the M\"obius strip $\Gamma_A$ for a small value of $A > 0$, along with the limiting case as $A \rightarrow 0$.
Note that the left wavefront does not close up since $\Gamma_A$ is not exact.

Now suppose $\Sigma$ is a non-orientable Lagrangian surface with isolated singularities in a symplectic four-manifold, and let $p \in \Sigma$ be a $\cone(L_u)$ singularity which we wish to smooth.
We can assume $p$ has a neighborhood $U$ such that the pair $(U,\Sigma\cap U)$ is symplectomorphic to the cone over $L_u$ in a small ball $B_r \subset (\R^4,\omega_\std)$ of radius $r$. 
Let $V$ be a small neighborhood of the other singular points of $\Sigma$. Then $\Sigma' := \Sigma \setminus (B_{r/2} \cup V)$ is a smooth surface with one boundary component for each singularity, where $B_{r/2}$ denotes the ball of radius $r/2$ in $B_r \subset U$.
In particular, by Weinstein's Lagrangian neighborhood theorem we can symplectically identify a small tubular neighborhood of $\Sigma'$ with a neighborhood of $\Sigma'$ in $T^*\Sigma'$. 

Using $\Gamma_A$, we can find a Lagrangian M\"obius strip $M \subset B_r$ whose intersection with the annular region $A_r := B_r \setminus B_{r/2}$ is arbitrarily $\calC^{\infty}$-close to $\cone(L_u) \cap A_r = \Sigma' \cap A_r$.
In particular, we can assume $M \cap A_r$ is given by the graph of a $\calC^\infty$-small closed $1$-form $\theta$ on $\Sigma' \cap A_r$.
Since $H^2(\Sigma',\bdy\Sigma';\R) = 0$ for a non-orientable $\Sigma'$, we can find a $\calC^\infty$-small closed $1$-form $\wt{\theta}$ on $\Sigma'$ which agrees with $\theta$ on $\Sigma' \cap A_r$ and which vanishes near the other boundary components of $\Sigma'$.
Since the graph of $\wt{\theta}$ over $\Sigma'$ lies in the given Weinstein neighborhood of $\Sigma'$, it glues smoothly to $M$ and agrees with $\Sigma'$ near the other singularities, i.e. we can replace $\Sigma \cap U$ with $M$ and leave the other singularities unchanged.

In summary:
\begin{proposition}\label{NonExactSmoothingProp}
Given a non-orientable Lagrangian surface $\Sigma$ with isolated singularities in a symplectic four-manifold, we can replace a $\cone(L_u)$ singularity with a smoothly embedded Lagrangian M\"obius strip, so that the resulting Lagrangian surface agrees with $\Sigma$ near the other singular points.
\end{proposition}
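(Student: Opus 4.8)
The plan is to excise a small neighborhood of the $\cone(L_u)$ point from $\Sigma$ and glue in the explicit Castro--Lerma M\"obius strip $\Gamma_A$, with the one genuinely nontrivial step being a cohomological patching that works precisely because $\Sigma$ is non-orientable.

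First I would fix the local model: since the singularity at the point $p$ we wish to smooth is modeled on $\cone(L_u)$, choose a neighborhood $U$ of $p$ in which $(U,\Sigma\cap U)$ is symplectomorphic to $\cone(L_u)$ inside a ball $B_r \subset (\R^4,\omega_\std)$, and let $V$ be a union of small balls around the remaining singular points. Then $\Sigma' := \Sigma \setminus (B_{r/2} \cup V)$ is a compact smooth connected \emph{non-orientable} surface with one boundary circle per singularity, and by Weinstein's Lagrangian neighborhood theorem I may symplectically identify a tubular neighborhood of $\Sigma'$ with a neighborhood of the zero section in $T^*\Sigma'$.

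Next I would insert the M\"obius strip. By the third of the listed properties of $\Gamma_A$, as $A\to 0$ the image $\Gamma_A(\R\times(\R\setminus\{0\}))$ converges in $\calC^\infty$ to $\cone(L_u)$; hence for $A$ small enough the Lagrangian M\"obius strip $M = \Gamma_A([0,\pi)\times[-T_A,T_A]) \subset B_r$ meets the annulus $A_r := B_r \setminus B_{r/2}$ in a Lagrangian that is $\calC^\infty$-close to $\cone(L_u)\cap A_r = \Sigma'\cap A_r$, and so is the graph over $\Sigma'\cap A_r$ of a $\calC^\infty$-small closed $1$-form $\theta$ lying in the chosen Weinstein neighborhood. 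The crucial step --- and the one I expect to be the main obstacle, since it is exactly where the non-exactness of $L_u$ (and the failure of a literal gluing along a Legendrian boundary) shows up --- is to extend $\theta$ to a $\calC^\infty$-small closed $1$-form $\wt\theta$ on all of $\Sigma'$ which equals $\theta$ on $\Sigma'\cap A_r$ and vanishes near the remaining boundary components. A closed $1$-form prescribed near $\bdy\Sigma'$ extends over $\Sigma'$ if and only if its class in $H^1(\bdy\Sigma';\R)$ lies in the image of the restriction $H^1(\Sigma';\R)\to H^1(\bdy\Sigma';\R)$, and by the long exact sequence of the pair the cokernel of this restriction injects into $H^2(\Sigma',\bdy\Sigma';\R)$; by Lefschetz duality this last group is $H_0$ of $\Sigma'$ with coefficients in the orientation local system, which vanishes because $\Sigma'$ is connected and non-orientable. (For orientable $\Sigma'$ the group is $\R$ and the extension can genuinely fail, consistent with the Legendrian contact homology obstruction to exact Lagrangian M\"obius fillings of $L_u$ recalled above.) Once $\wt\theta$ is found, its graph over $\Sigma'$ lies in the Weinstein neighborhood, glues smoothly to $M$ along $A_r$, and agrees with $\Sigma'$ near the other singularities; replacing $\Sigma\cap U$ by $M$ and leaving everything else unchanged yields the required Lagrangian surface. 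Beyond this cohomological step, the remaining work --- arranging the model neighborhood, keeping all the $1$-forms $\calC^\infty$-small, and checking that the glued object is a smooth embedded Lagrangian --- is routine.
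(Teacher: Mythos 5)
Your proposal is correct and follows essentially the same approach as the paper: excise $B_{r/2}$ and neighborhoods of the other singular points, write $M\cap A_r$ as the graph of a small closed $1$-form over $\Sigma'\cap A_r$ in a Weinstein neighborhood, and use the vanishing of $H^2(\Sigma',\bdy\Sigma';\R)$ in the non-orientable case to extend it to a small closed $1$-form on all of $\Sigma'$ that dies near the remaining boundary. The added Lefschetz-duality explanation for why that relative $H^2$ vanishes is a welcome elaboration but does not change the argument.
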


\begin{remark}
If $\Sigma$ is orientable, the above argument does not quite go through, since $H^2(\Sigma',\bdy\Sigma';\R) \neq 0$ for $\Sigma'$ an orientable surface. 
In fact, the obstruction to extending a closed $1$-form $\theta$ from $\bdy \Sigma'$ to $\Sigma'$ is precisely the integral of $\theta$ over $\bdy\Sigma'$ (oriented as the boundary of $\Sigma'$).
Therefore the same argument shows that we can replace a pair of $\cone(L_u)$ singularities with a pair of smoothly embedded Lagrangian M\"obius strips if we assume that small loops (with the induced orientations) around the two singularities have opposite Maslov indices. 
Indeed, after possibly removing a model neighborhood of one of the singularities and gluing in another cone modeled on the same Legendrian isotopy class (cf. Remark ~\ref{smoothdiskremark}), we can assume that the two singularities have symplectomorphic model neighborhoods.
Because of this symmetry, we can arrange that the integrals of $\theta$ around the boundary circles of the model neighborhoods are either equal or opposite, depending on the induced orientations on these circles. As explained in $\S 4$ of ~\cite{givental1986lagrangian} in the context of open Whitney umbrellas (see $\S\ref{Open Whitney umbrellas}$ for the connection with $\cone(L_u)$ singularities), the Maslov index around each singularity is $\pm 2$, and the sign flips when we reverse the orientation of the corresponding boundary circle.
We also note that if $\Sigma$ in $(\R^4,\omega_\std)$ is orientable with only $\cone(L_u)$ singularities, the Maslov indices of the singularities must sum to zero (cf. Corollary 2 in \cite{givental1986lagrangian}).
\end{remark}

\begin{figure}
 \centering
 \includegraphics[scale=.8]{./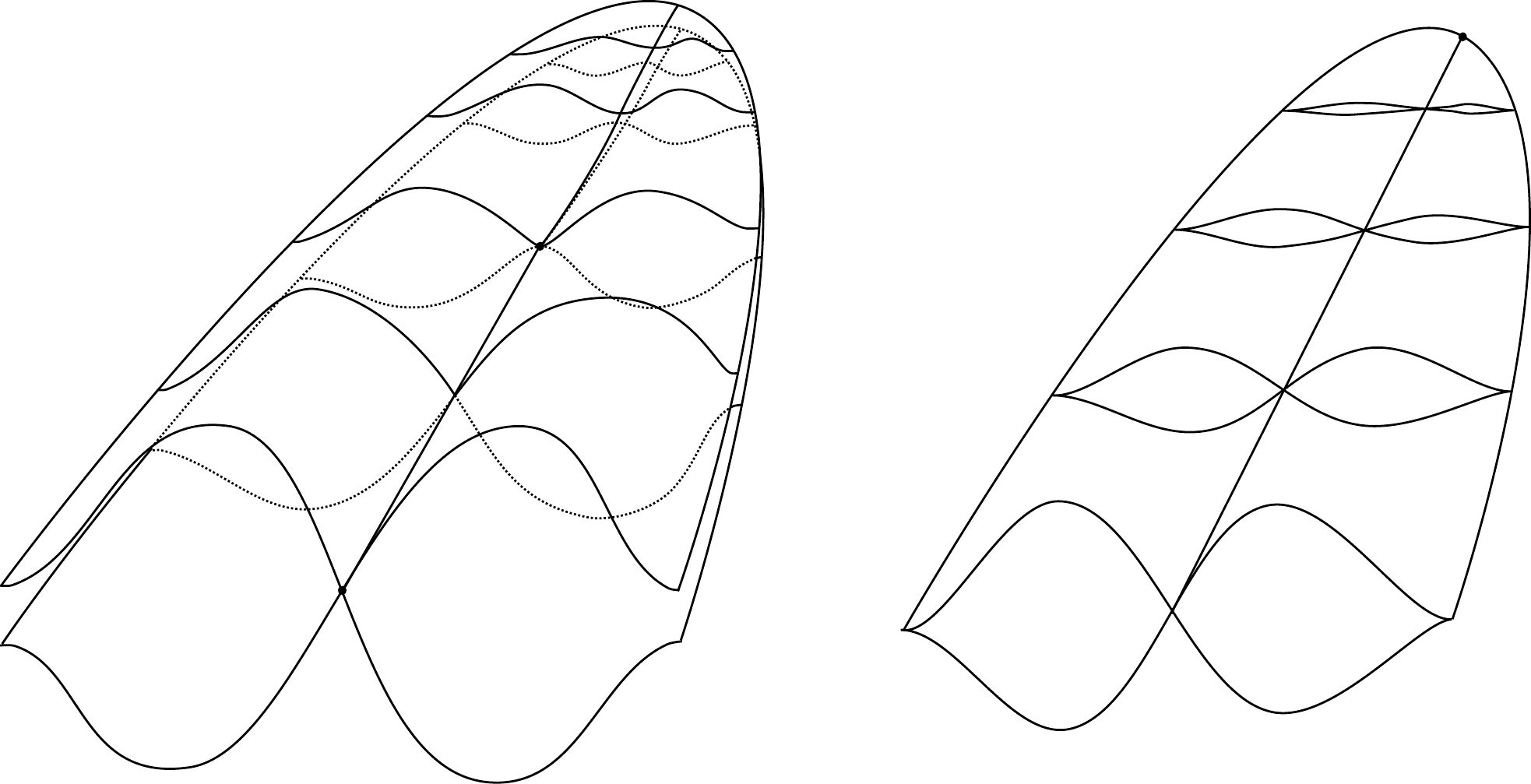}
 \caption{Left: A wavefront diagram for a (non-exact) Lagrangian M\"obius strip with (non-Legendrian) boundary approximating $L_u$. Note that the upper and lower sheets are glued together along the outer parabola-shaped arcs. Right: the limiting case, the cone over the Legendrian $L_u$.}
\label{MobiusWavefront}
\end{figure}
  
\end{subsection}

\begin{subsection}{Combinations of cones over $L_u$}\label{combinationsofcones}
  
In this subsection we introduce three useful ways of combining $\cone(L_u)$ singularities. Together with Proposition~\ref{NonExactSmoothingProp} these will form the core of the constructive part of the proof of Theorem~\ref{maintheorem}.

\sss

\noindent (a) Figure~\ref{Combining2Umbrellas} illustrates how to construct a Lagrangian cylinder with two $\cone(L_u)$ singularities and with boundary a pair of unlinked trivial Legendrian knots.
Note that this is essentially equivalent to the disorienting handle in Figure 3 of \cite{givental1986lagrangian}.

\sss

\noindent (b) Figure~\ref{Combining3Umbrellas} illustrates how to construct a Lagrangian M\"obius strip with three $\cone(L_u)$ singularities and with boundary the trivial Legendrian knot.

\sss

\noindent (c) Figure~\ref{Combining4Umbrellas} illustrates how to construct a Lagrangian cylinder with four $\cone(L_u)$ singularities and with boundary a pair of once linked trivial Legendrian knots.

\begin{figure}
 \centering
 \includegraphics[scale=.8]{./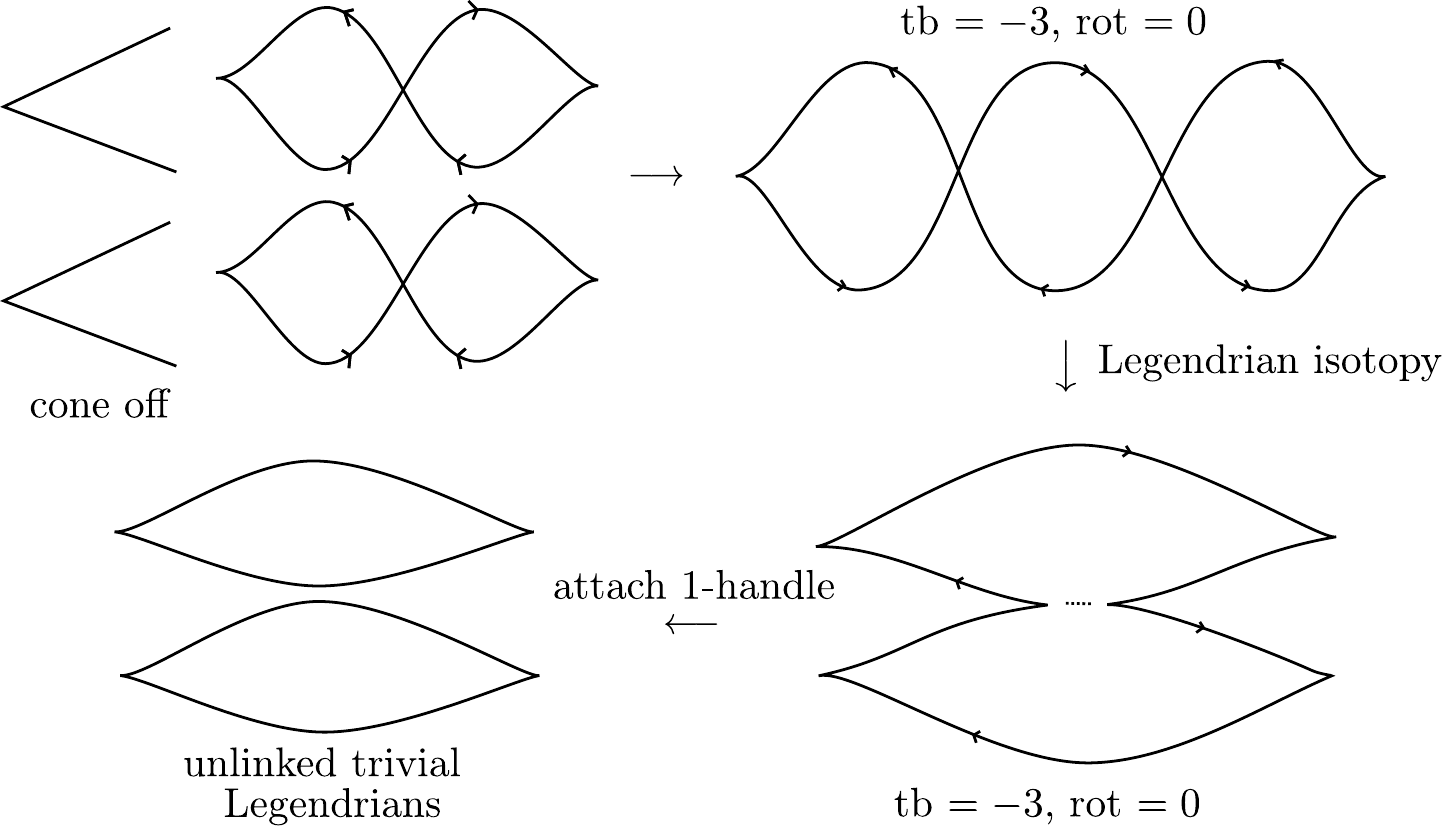}
 \caption{A Lagrangian cylinder. A connected sum of two copies of $L_u$ yields the Legendrian unknot with $\tb = -3$ and $\rot = 0$, which bounds a Lagrangian disk with two $\cone(L_u)$ singularities. We then attach a $1$-handle along the two cusps indicated, which results in two unlinked trivial Legendrians.}
\label{Combining2Umbrellas}
\end{figure}
\begin{figure}
 \centering
 \includegraphics[scale=.8]{./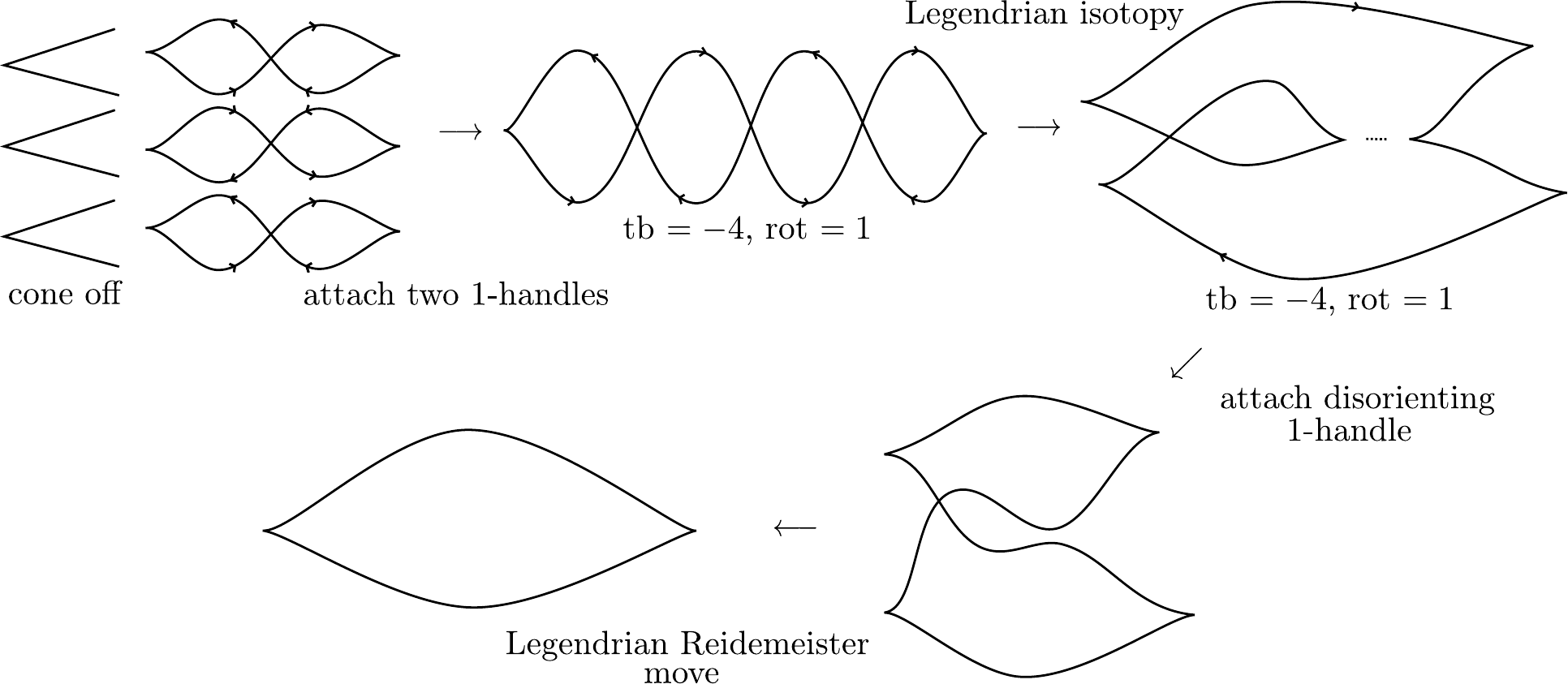}
 \caption{A Lagrangian M\"obius strip. A connected sum of three copies of $L_u$ yields the Legendrian unknot with $\tb = -4$ and $\rot = \pm 1$, which bounds a Lagrangian disk with three $\cone(L_u)$ singularities. We then attach a $1$-handle along the two cusps indicated, which results in the trivial Legendrian.}
\label{Combining3Umbrellas}
\end{figure}
\begin{figure}
 \centering
 \includegraphics[scale=.8]{./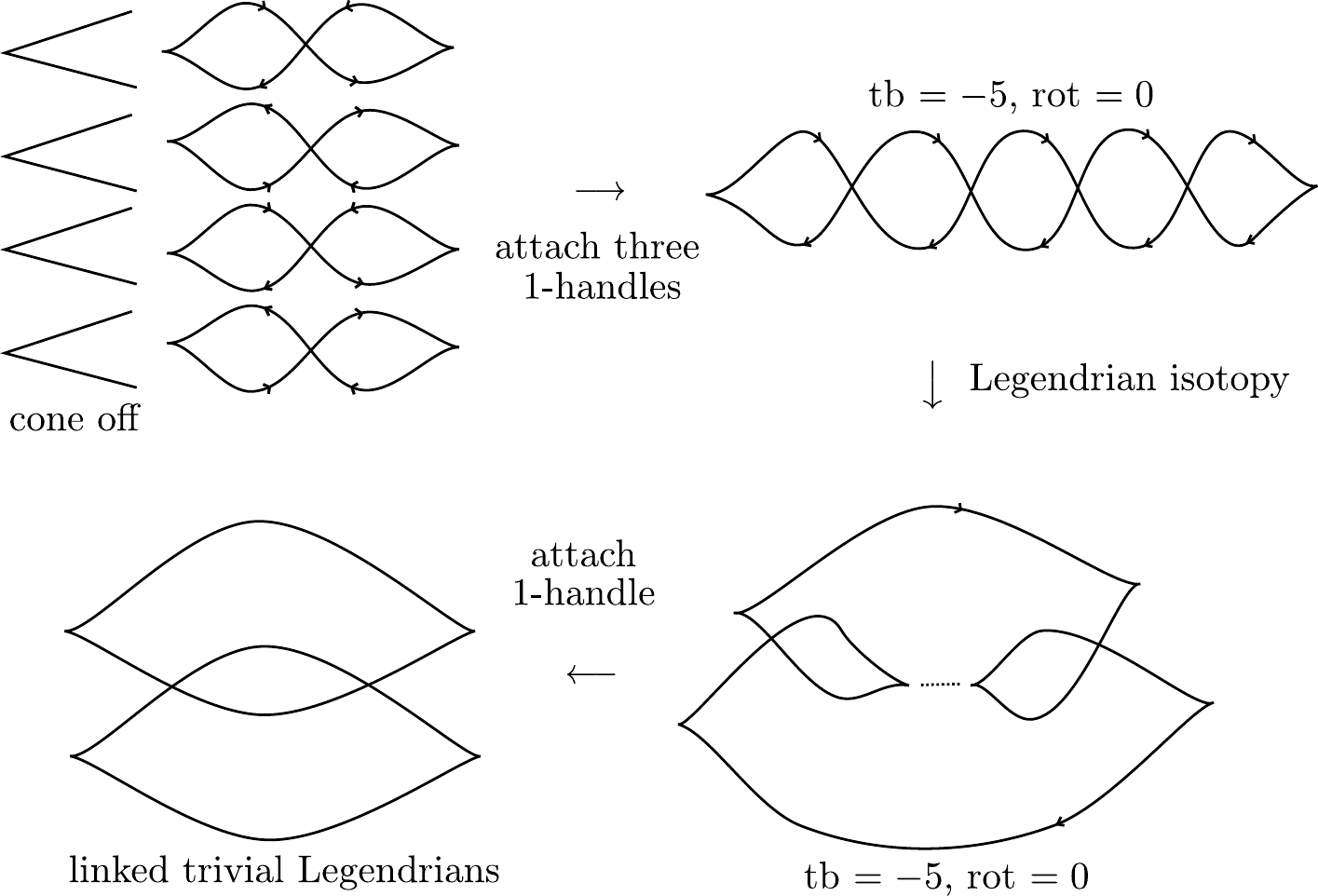}
 \caption{Another Lagrangian cylinder. A connected sum of four copies of $L_u$ yields the Legendrian unknot with $\tb = -5$ and $\rot = 0$. We then attach a $1$-handle along the two cusps indicated, which results in a pair of once linked trivial Legendrians.}
\label{Combining4Umbrellas}
\end{figure}

\end{subsection}

\begin{subsection}{Constructing singular surfaces and rationally convex disk bundles}\label{ConstructingSingularSurfacesSection}

In this subsection we complete the constructive part of the proof of Theorem~\ref{maintheorem} by constructing Lagrangian surfaces with $\cone(L_u)$ singularities and invoking Lemma~\ref{surroundinglemma}. 
  
For the orientable case, any smoothly embedded Lagrangian torus in $\C^2$ (for example the Clifford torus) gives rise to a $D(0,0)$ as in Theorem~\ref{maintheorem}. 
Similarly, using $\S$\ref{combinationsofcones} (a), we can connect any collection of $g\geq 1$ disjointly embedded Lagrangian tori in $\C^2$ to form an orientable Lagrangian surface of genus $g$ with $2g-2$ $\cone(L_u)$ singularities, and hence a $D(2-2g,0)$ as in Theorem~\ref{maintheorem}.

For the non-orientable case, we need one additional example.

\begin{example}\label{kleinbottleexample}
Consider the Whitney sphere $S_W \subset \C^2$ as in Figure~\ref{WhitneySphere}. We can find a small Darboux ball around the double point in which $S_W$ looks like the intersection of two Lagrangian planes, with boundary a pair of once-linked trivial Legendrian knots. By rescaling, we get a Lagrangian cap in $\C^2 \setminus B^4$ of the linked trivial Legendrians in $(S^3,\xi_\std)$.
We can similarly view the cylinder from $\S$\ref{combinationsofcones}(c) as lying in $B^4$, and therefore these two pieces glue together to give an embedded Lagrangian in $\C^2$ with four $\cone(L_u)$ singularities.
This Lagrangian has vanishing Euler characteristic and therefore cannot be orientable, since then we could find a smooth embedding of $D(0,-4)$ in $\C^2$, which does not exist (see $\S$\ref{obstructionssection}).
The result is therefore a Lagrangian Klein bottle with four $\cone(L_u)$ singularities and hence a $\tD(0,-4)$ as in Theorem~\ref{maintheorem}.
\end{example}

Starting with Example~\ref{kleinbottleexample}, we can sequentially create and resolve singularities using $\S$\ref{combinationsofcones}(b) and Proposition~\ref{NonExactSmoothingProp} to construct the rest of the non-orientable examples. This is illustrated in Table~\ref{table}.
 \begin{table}
   \centering 
\begin{align*}   
\xymatrix@=3em
{
\wt{D}(0,-4)\ar@{->}[d]\ar@{->}@/^/[dr]\\
\wt{D}(-1,-6)\ar@{->}[d]\ar@{->}@/^/[dr] & \wt{D}(-1,-2)\ar@{->}[d]\ar@{->}@/^/[dr]\\
\wt{D}(-2,-8)\ar@{->}[d]\ar@{->}@/^/[dr] & \wt{D}(-2,-4)\ar@{->}[d]\ar@{->}@/^/[dr] & \wt{D}(-2,0)\ar@{->}[d]\ar@{->}@/^/[dr]\\
\wt{D}(-3,-10)\ar@{->}[d]\ar@{->}@/^/[dr] & \wt{D}(-3,-6)\ar@{->}[d]\ar@{->}@/^/[dr] & \wt{D}(-3,-2)\ar@{->}[d]\ar@{->}@/^/[dr] & \wt{D}(-3,2)\ar@{->}[d]\ar@{->}@/^/[dr]\\
\wt{D}(-4,-12)\ar@{->}[d]\ar@{->}@/^/[dr] & \wt{D}(-4,-8)\ar@{->}[d]\ar@{->}@/^/[dr] & \wt{D}(-4,-4)\ar@{->}[d]\ar@{->}@/^/[dr] & \wt{D}(-4,0)\ar@{->}[d]\ar@{->}@/^/[dr] & \wt{D}(-4,4)\ar@{->}[d]\\
\wt{D}(-5,-14)\ar@{->}[d] & \wt{D}(-5,-10)\ar@{->}[d] & \wt{D}(-5,-6)\ar@{->}[d] & \wt{D}(-5,-2)\ar@{->}[d] & \wt{D}(-5,2)\ar@{->}[d]\\
\rotatebox{90}{...}&\rotatebox{90}{...}&\rotatebox{90}{...}&\rotatebox{90}{...}&\rotatebox{90}{...}&
}
\end{align*}
\caption{Realizing the non-orientable constructions of Theorem~\ref{maintheorem}, starting with Example~\ref{kleinbottleexample}. The vertical arrows represent gluing in a M\"obius strip with three $\cone(L_u)$ singularities via $\S$\ref{combinationsofcones}(b), and the diagonal arrows represent resolving a $\cone(L_u)$ singularity via Proposition~\ref{NonExactSmoothingProp}.}
\label{table}
 \end{table}

\end{subsection}

\end{section}

\begin{section}{$\tD(1,-2)$ is not rationally convex}\label{projectivespacesection}

In this section we prove that $\tD(1,-2)$, the disk bundle with Euler number $-2$ over the real projective plane, is not diffeomorphic to a strictly pseudoconvex domain in $\C^2$ with rationally convex closure.
Suppose by contradiction that $D \subset \C^2$ is such a domain.
We first observe that Proposition~\ref{NonExactSmoothingProp} and Theorem~\ref{lagrangiankleinbottletheorem} immediately imply the following theorem, since resolving the singularity would yield a Lagrangian Klein bottle in $\C^2$.
\begin{theorem}\label{lagrangianrealprojectiveplanetheorem}
There is no Lagrangian embedding of $\RP^2$ with one $\cone(L_u)$ singularity  in $(\R^4,\omega_\std)$.  
\end{theorem}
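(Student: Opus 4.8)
The plan is to derive this as an immediate corollary of Proposition~\ref{NonExactSmoothingProp} together with Shevchishin's theorem (Theorem~\ref{lagrangiankleinbottletheorem}), arguing by contradiction. Suppose $\Sigma \subset (\R^4,\omega_\std)$ were a Lagrangian embedding of $\RP^2$ with exactly one $\cone(L_u)$ singularity. Since $\RP^2$ is non-orientable, Proposition~\ref{NonExactSmoothingProp} applies: we may remove a model neighborhood of the singular point and glue in a smoothly embedded Lagrangian M\"obius strip, producing a genuinely smoothly embedded Lagrangian surface $\Sigma'$ in $(\R^4,\omega_\std)$.

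The second step is to identify $\Sigma'$ topologically. Removing an open disk neighborhood of the singular point from $\RP^2$ yields a M\"obius strip, and gluing in another M\"obius strip along the common boundary circle produces a closed non-orientable surface with Euler characteristic $\chi(\Sigma') = \chi(\RP^2) - 1 + 0 = 0$ (each M\"obius strip contributes $0$, and the disk we removed had $\chi = 1$); the unique closed non-orientable surface with $\chi = 0$ is the Klein bottle $\K^2$. Hence $\Sigma'$ is a smoothly and \emph{Lagrangianly} embedded Klein bottle in $(\R^4,\omega_\std)$, contradicting Theorem~\ref{lagrangiankleinbottletheorem}. This contradiction establishes the theorem.

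I expect the only point requiring any care is the verification that Proposition~\ref{NonExactSmoothingProp} genuinely applies in this setting — i.e.\ that a ``Lagrangian embedding of $\RP^2$ with one $\cone(L_u)$ singularity'' is precisely an instance of the ``non-orientable Lagrangian surface with isolated singularities'' hypothesis of that proposition, with the singularity being of the required $\cone(L_u)$ type. Since there is a single singular point, the set $V$ of ``other singular points'' in the statement of Proposition~\ref{NonExactSmoothingProp} is empty, so the smoothing is performed globally and the output is a closed smooth Lagrangian surface with no remaining singularities; this is exactly what is needed. The rest is the elementary surface-classification computation above, so no genuine obstacle arises — the real content has been offloaded to Proposition~\ref{NonExactSmoothingProp} (whose proof in turn relies on the Castro--Lerma family $\Gamma_A$ and the vanishing of $H^2(\Sigma',\bdy\Sigma';\R)$ in the non-orientable case) and to Shevchishin's theorem.
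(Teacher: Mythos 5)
Your proof is correct and follows exactly the paper's own argument: the paper also obtains Theorem~\ref{lagrangianrealprojectiveplanetheorem} as an immediate consequence of Proposition~\ref{NonExactSmoothingProp} and Shevchishin's theorem, noting that resolving the single $\cone(L_u)$ singularity yields a smoothly embedded Lagrangian Klein bottle in $\C^2$. Your topological bookkeeping (two M\"obius strips glued along a circle give $\K^2$) and your observation that the ``other singularities'' set is empty are both accurate.
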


By Theorem~\ref{symplecticcharacterizationtheorem} and Remark~\ref{extensionremark} we can assume that $D$ has 
a strictly plurisubharmonic defining function $\phi$
such that the symplectic form $\omega_{\phi}$ extends to a K\"ahler form $\omega_\ext$ on $\C^2$ which is standard outside of a compact set.
Using symplectic cut-and-paste techniques (see for example \cite[\S 7]{ozbagci2004surgery}), we will contradict the following theorem of Gromov, refined by McDuff \cite{gromov1985pseudo,mcduff1990structure} (see also chapter 9 of \cite{mcduff2012j}).
\begin{theorem}{\normalfont (Gromov--McDuff)}\label{gromovtheorem}
Let $(V,\omega)$ be a connected symplectic $4$-manifold which is symplectomorphic to $(\R^4,\omega_\std)$ outside of a compact set and which contains no symplectically embedded $2$-spheres with self-intersection number $-1$. Then $(V,\omega)$ is symplectomorphic to $(\R^4,\omega_\std)$.
\end{theorem}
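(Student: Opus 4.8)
The plan is to reduce the theorem to the classification of closed symplectic $4$-manifolds containing a symplectically embedded sphere of positive self-intersection, and then to go back from the closed model to $V$ by an exhaustion argument. \emph{First I would compactify.} Since $(V,\omega)$ is symplectomorphic to $(\R^4,\omega_\std)$ outside a compact set $K$, fix such an identification of $V\setminus K$ with $(\C^2\setminus\overline{B_R},\omega_\std)$. For every sufficiently large $\rho$ the truncation $V_\rho := K\cup\{R<|z|\le\rho\}$ is a compact Liouville domain with convex contact boundary $(S^3,\xi_\std)$ whose ``size'' is governed by $\rho$. A tubular neighborhood $N$ of the line at infinity $\mathbb{CP}^1_\infty\subset(\mathbb{CP}^2,a\,\omega_{\mathrm{FS}})$ is, because $\mathbb{CP}^1_\infty$ has square $+1$, a symplectic cap for $(S^3,\xi_\std)$ with concave boundary whose size is governed by $a$; choosing $a=a(\rho)$ suitably, a standard symplectic gluing produces a \emph{closed} symplectic $4$-manifold $(\overline V_\rho,\overline\omega_\rho)=V_\rho\cup_{S^3}N$ containing the symplectically embedded sphere $C:=\mathbb{CP}^1_\infty$ with $C\cdot C=+1$, and with $V_\rho$ the complement of $N$ in $\overline V_\rho$.

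\emph{Next I would invoke the pseudoholomorphic curve analysis of Gromov and McDuff.} Choose an $\overline\omega_\rho$-compatible almost complex structure $J$ for which $C$ is $J$-holomorphic and study the moduli space of $J$-holomorphic spheres in the class $[C]$. Because $C\cdot C=1$, positivity of intersections and Gromov compactness rule out bubbling, every such curve is embedded, the moduli space is regular by automatic transversality for spheres of square $\ge -1$, and these curves sweep out $\overline V_\rho$. By McDuff's structure theorem for rational and ruled symplectic surfaces (\cite{mcduff1990structure}; see also \cite[Ch.~9]{mcduff2012j}) it follows that $(\overline V_\rho,\overline\omega_\rho)$ is symplectomorphic to a (possibly iterated) symplectic blow-up of $(\mathbb{CP}^2,a'\,\omega_{\mathrm{FS}})$ in which $C$ corresponds to a symplectic line.

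\emph{Then I would use the no-$(-1)$-sphere hypothesis to kill the blow-ups.} Each exceptional class $E_i$ of such a blow-up is represented by a symplectically embedded $(-1)$-sphere, and since $[E_i]\cdot[C]=0$, positivity of intersections applied to $J$-holomorphic representatives together with a small isotopy lets us take this $(-1)$-sphere disjoint from a tubular neighborhood of $C$, hence contained in $V_{\rho'}\subset V$ for a slightly larger $\rho'$; this contradicts the hypothesis on $V$. Therefore there are no blow-ups, so $(\overline V_\rho,\overline\omega_\rho)\cong(\mathbb{CP}^2,a'\,\omega_{\mathrm{FS}})$ with $C$ a symplectic line. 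By Gromov's theorem that any two symplectically embedded spheres in the class of a line in $\mathbb{CP}^2$ are symplectically isotopic, an ambient symplectomorphism carries $C$ onto the standard $\mathbb{CP}^1_\infty$, and removing a tubular neighborhood shows that $V_\rho$ is symplectomorphic to a standard ball $B^4(c(\rho))$, with $c(\rho)\to\infty$ as $\rho\to\infty$.

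\emph{Finally I would pass to the limit.} We have exhibited $V=\bigcup_\rho V_\rho$ as an increasing union of standard balls of capacity tending to infinity, the inclusions $V_\rho\hookrightarrow V_{\rho'}$ being the obvious ones in the end coordinates. Using the uniqueness up to symplectomorphism of symplectic ball embeddings into a ball (McDuff), one corrects the identifications $V_\rho\cong B^4(c(\rho))$ to be mutually compatible and takes a Moser-style limit to obtain a symplectomorphism $V\cong\bigcup_n B^4(n)=(\R^4,\omega_\std)$. I expect the second step to be the main obstacle: it is the genuinely hard analytic core of the argument -- Gromov's and McDuff's classification of symplectic $4$-manifolds carrying a symplectic sphere of non-negative square, resting on Gromov compactness, automatic regularity in dimension $4$, and positivity of intersections -- while the compactification in the first step and the exhaustion in the last step are standard but must be set up with care.
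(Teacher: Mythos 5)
The paper does not prove this statement at all: Theorem~\ref{gromovtheorem} is quoted as a black box, with references to Gromov \cite{gromov1985pseudo}, McDuff \cite{mcduff1990structure}, and \cite[Ch.~9]{mcduff2012j}, so there is no in-paper argument to compare yours against. Your sketch is, in outline, the standard Gromov--McDuff proof of exactly this statement: cap off the standard end with a neighborhood of the line at infinity in $\mathbb{CP}^2$ to get a closed manifold containing a symplectic sphere $C$ with $C\cdot C=+1$, invoke the structure theorem for manifolds containing a sphere of nonnegative square, use positivity of intersections to push any exceptional sphere off $C$ (and hence, since the cap minus a smaller neighborhood of $C$ is a standard shell, into $V$), contradict the hypothesis, and conclude $\overline V_\rho\cong\mathbb{CP}^2$ with $C$ a line. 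That part is sound; one small point you leave implicit is why the ruled cases are excluded (a sphere section forces the base to be $S^2$, and a fiber has square $0$, so square $1$ indeed pins down the blown-up $\mathbb{CP}^2$ case with $C$ the line).

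Two places deserve more care, although neither is a fatal gap. First, ``removing a tubular neighborhood shows that $V_\rho$ is symplectomorphic to a standard ball'' is too quick as stated: the complement of a tubular neighborhood of a line is a standard closed ball only once the identification $\overline V_\rho\cong\mathbb{CP}^2$ has been corrected to be \emph{standard on the cap}, which requires Gromov's isotopy uniqueness of symplectic lines plus the symplectic neighborhood theorem (after shrinking the cap). Second, once this correction is made, your whole final exhaustion/limit step becomes unnecessary: a single truncation $V_\rho$ suffices, since the corrected symplectomorphism carries $V_\rho$ onto the standard ball $\{|z|\le\rho\}$ and is standard near the boundary, so it glues with the identity on the standard end to give $V\cong(\R^4,\omega_\std)$ directly. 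As you wrote it, the limit argument instead leans on McDuff's connectedness of spaces of symplectic ball embeddings (you cite ``uniqueness up to symplectomorphism,'' but what the telescoping construction really needs is uniqueness up to ambient isotopy); this can be made to work, but it imports another nontrivial theorem where a softer germ-matching argument already finishes the proof. With these adjustments your proposal is a correct rendering of the argument the paper is citing.
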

\begin{remark}
Note that if we drop the assumption about $2$-spheres, the theorem still implies that $(V,\omega)$ has negative definite intersection form, since we can always symplectically blow-down a collection of spheres to achieve minimality.
\end{remark}

The contact structure $\xi := \ker d^{c}\phi$ on $M := \bdy D$ is Stein fillable and therefore tight.
A straightforward exercise in Kirby Calculus or plumbing calculus \cite{neumann1981calculus} shows that $M$ is Seifert fibered over $S^2$ with three two-fold exceptional fibers.
For example, a single R2 move (with $\delta = \delta_1 = \delta_2 = -1$) from \cite{neumann1981calculus} shows that $M$ has a plumbing diagram with four vertices, the first of degree three and labeled by $-1$, and the latter three of degree one and labeled by $-2$ (each vertex corresponds to a circle bundle over $S^2$).
On the other hand,  we recall that Seifert fibered $3$-manifolds with base $S^2$ and exactly three singular fibers are called {\em small}, and the notation $M(e_0;r_1,r_2,r_3)$ with $r_1 \geq r_2 \geq r_3$ refers to the manifold with surgery diagram consisting of four circles labeled by $e_0,-\frac{1}{r_1},-\frac{1}{r_2},-\frac{1}{r_3}$ respectively. Here the latter three circles are each linked once with the first circle and are otherwise unlinked (see Figure 1 of \cite{ghiggini2007tight}), and hence we identify the plumbing picture of $M$ with $M(-1;\frac{1}{2},\frac{1}{2},\frac{1}{2})$.
One can also observe the Seifert structure on $M$ explicitly by identifying $M$ with the projectivized  cotangent bundle of $\RP^2$. The standard circle action on $S^2$ by rotations descends to $\RP^2$ and then lifts to the circle action on $M$.

\begin{remark}
As pointed out in \cite[\S 4]{ghiggini2007tight}, $M$ with the {\em opposite orientation} is diffeomorphic to the link of the $D_4$ singularity.
\end{remark}
By the results of that paper (see Corollary 4.11 and the remark at the end of $\S 4$), there is a unique positive tight contact structure on $M$ up to contactomorphism.
In particular, $(M,\xi)$ has an exact symplectic filling $X$ given by a suitable strictly pseudoconvex neighborhood of any Lagrangian $\RP^2$ with one $\cone(L_u)$ singularity embedded in a K\"ahler surface. Example \ref{RP2inKahlerSurface} gives one such construction.

\begin{example}\label{RP2inKahlerSurface}
Let $A$ be a compact M\"obius strip, and let $l$ denote the core circle of $A$.
Then the conormal bundle $\nu^*(l)$ of $l$ is a Lagrangian M\"obius strip in $D(T^*A)$ with Legendrian boundary in $S(T^*A)$. 
Now attach an abstract Morse--Bott symplectic handle (\cite{morsebott}, see also \cite[\S 3a]{abouzaid2010altering} for a concise description) with core $S^1 \times I$ to the disjoint union of $B^4$ and $D(T^*A)$ along $L_u$ and $\bdy L$ respectively.
Take $X$ to be the resulting Liouville domain obtained after smoothing the corners. Notice that the three pieces $\cone(L_u)$, $\nu^*(l)$, and $S^1 \times I$ combine to give an embedded Lagrangian $\RP^2$ with one $\cone(L_u)$ singularity.
\end{example}

We can now form a new symplectic manifold $\wt{\C^2} = (\C^2 \setminus D) \cup X$ by excising $D$ from $(\C^2,\omega_\ext)$ and symplectically gluing in $X$.
Since $X$ and $D$ are both diffeomorphic to $\tD(1,-2)$, which is a rational homology ball, it follows that $H_2(\wt{\C^2};\R) = 0$.
By Theorem~\ref{gromovtheorem}, $\wt{\C^2}$ is symplectomorphic to $(\R^4,\omega_\std)$, contradicting Theorem~\ref{lagrangianrealprojectiveplanetheorem}.

\end{section}

\begin{section}{$\tD(0,0)$ is not rationally convex}\label{kleinbottlesection}
In this section we complete the proof of Theorem~\ref{maintheorem} by showing that the (co-)tangent unit disk bundle of the Klein bottle is not diffeomorphic to a rationally convex strictly pseudoconvex domain in $\C^2$.
We follow a similar outline to the previous section.
Suppose by contradiction that $D \subset \C^2$ is a rationally convex disk bundle with $D \cong \wt{D}(0,0)$.
By Theorem~\ref{symplecticcharacterizationtheorem} and Remark~\ref{extensionremark} we can assume that $D$ has 
a strictly plurisubharmonic defining function $\phi$
such that the symplectic form $\omega_{\phi}$ extends to a K\"ahler form $\omega_\ext$ on $\C^2$ which is standard outside of a compact set.

As in the previous section, the contact structure $\xi := \ker d^{c}\phi$ on $M := \bdy D$ is Stein fillable and therefore tight.
Note that $M$ can be smoothly identified with the torus bundle over the circle with monodromy $-\id$, i.e.
\begin{align*}
\{(x_1,x_2,t) \in \R/\Z \times \R/\Z \times \R/(2\Z)\} / \langle (x_1,x_2,t) \mapsto (-x_1,-x_2,t+1)\rangle.
\end{align*}
By the work of Giroux and Honda, the tight contact structures on $M$ up to contactomorphism have been classified and lie in two infinite families.
\begin{theorem}{\normalfont (Giroux \cite{giroux1999infinite}, Honda \cite{honda2000classification})}
Up to contactomorphism, the tight contact structures on $M$ are given by
\begin{align*}
\alpha_m = \ker\left( \sin(\pi mt)dx_1 + \cos(\pi mt)dx_2\right),\;\;\;\;\; m = 1,3,5,7,...\\
\beta_n = \ker\left( \sin(2\pi nx_1)dx_2 + \cos(2\pi nx_1)dt\right),\;\;\;\;\; n = 1,2,3,4,...  
\end{align*}
\end{theorem}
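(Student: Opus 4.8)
The plan is to carry out the convex surface theory program that underlies both Giroux's and Honda's proofs. Write $M$ for the torus bundle over the circle with monodromy $-\id$ and fix a fiber $\Sigma \cong T^2$. Given a tight contact structure $\xi$ on $M$, I would first use Giroux flexibility to isotope $\xi$ so that $\Sigma$ is convex; since $\Sigma$ is incompressible and $\xi$ is tight, the dividing set $\Gamma_\Sigma$ is a nonempty family of parallel essential curves, and after an isotopy together with a fiberwise element of $\mathrm{SL}(2,\Z)$ (which acts on $M$ because it commutes with $-\id$) we may assume $\Gamma_\Sigma$ consists of exactly two curves of a fixed slope, say $\infty$. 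Cutting $M$ along $\Sigma$ then yields a tight contact structure on $T^2 \times [0,1]$ with convex boundary; because $-\id$ acts trivially on slopes, the two boundary dividing slopes coincide. Conversely, $\xi$ is reconstructed from such a structure on $T^2 \times [0,1]$ together with a choice of $-\id$-equivariant identification of the boundary contact germs, modulo the residual symmetries.

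Next I would feed in Honda's classification of tight contact structures on $T^2 \times [0,1]$ with two dividing curves of equal slope on each boundary component. The non-rotative ones (no Giroux torsion in the interior) are encoded by minimal edge-paths in the Farey graph from the slope to itself with a sign on each edge, modulo the shuffling relation permuting consecutive equal-sign edges; on top of any such structure one may insert $k \ge 0$ full torsion layers, each rotating the characteristic foliation by $2\pi$. Re-gluing by $-\id$, and carefully bookkeeping which cut-open structures admit a compatible equivariant identification of germs and which of them become contactomorphic after gluing, should produce exactly two families: the \emph{rotative} structures, where the characteristic foliation on $\Sigma \times \{t\}$ turns through a total angle $m\pi$ with $m$ odd as $t$ runs from $0$ to $1$, which are built from torsion layers and assemble into the $\alpha_m$; and the \emph{non-rotative} structures, with bounded twisting, indexed by the combinatorial type of the associated Farey path (equivalently a relative Euler class), which assemble into the $\beta_n$. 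One must simultaneously rule out, using tightness, every gluing that would create an overtwisted disc.

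It then remains to verify that all $\alpha_m$ and $\beta_n$ are tight and pairwise non-contactomorphic. For $\alpha_m$ with $m$ odd, pulling $M$ back along the degree-two cover $S^1 \to S^1$ gives $T^3$, and $\alpha_m$ lifts to Giroux's universally tight contact structure $\ker(\sin(\pi m t)\,dx_1 + \cos(\pi m t)\,dx_2)$ on $T^3$; a contact structure covered by a tight one is tight, so each $\alpha_m$ is (universally) tight. For $\beta_n$, I would exhibit $(M,\beta_n)$ as the strictly pseudoconvex boundary of a Stein domain, e.g.\ by an explicit Legendrian surgery presentation, so that tightness follows from fillability, or else verify tightness directly via Honda's state traversal. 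For distinctness: $\alpha_m$ is universally tight whereas $\beta_n$ is virtually overtwisted (the $n$-fold cover unwinding the $x_1$-direction contains a half-Lutz-type region, hence is overtwisted), so the two families are disjoint; within $\{\alpha_m\}$ the index $m$ is pinned down by the Giroux torsion, a contactomorphism invariant; and within $\{\beta_n\}$ the index $n$ is separated by the minimal order of a cover on which the structure becomes overtwisted, together with the homotopy class of the plane field and the relative Euler data recorded above.

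The main obstacle is the gluing-and-counting step: one must show that torsion layers are the \emph{only} mechanism producing infinitely many structures (this is precisely why the answer is infinite here, unlike for lens spaces), that the $\beta$-family is exactly as listed with no missing members and no unexpected coincidences under the $-\id$ identification, and that every gluing off the list yields an overtwisted disc. Handling Giroux torsion cleanly and proving tightness of the virtually overtwisted family $\beta_n$ — which cannot be obtained for free from any covering-space argument — are the delicate points.
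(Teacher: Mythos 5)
First, note that the paper does not prove this theorem: it is quoted from Giroux \cite{giroux1999infinite} and Honda \cite{honda2000classification}, so your proposal can only be measured against those proofs. Your general outline (make a fiber convex, normalize the dividing set using the $\mathrm{SL}(2,\Z)$-symmetry, cut to $T^2\times[0,1]$, invoke the classification there, and control the regluing) is indeed the strategy of the cited works, but as written it is a program rather than a proof: the completeness and counting step that you defer (``torsion layers are the only mechanism,'' ``no missing members, no unexpected coincidences,'' ruling out all other gluings) is precisely the content of the Giroux--Honda theorem, so nothing in the hard direction is actually established.

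More importantly, there is a concrete error in the part you do carry out. The structures $\beta_n$ are \emph{not} virtually overtwisted; they are universally tight. Indeed $\beta_1$ is the canonical contact structure on $S(T^*\K^2)$ (this is used in $\S$\ref{kleinbottlesection} of the paper), Stein filled by $D(T^*\K^2)$, and for every $n$ the pullback of $\beta_n$ under the double cover $\TT^3\rightarrow M$ is, after an affine change of coordinates on the torus, the rotating structure $\gamma_n$, which is universally tight by Giroux and Kanda. Unwinding the $x_1$-direction only spreads the rotation over a longer interval: a Giroux-torsion (``half-Lutz--type'') layer is tight, not overtwisted, so no finite cover of $(M,\beta_n)$ ever becomes overtwisted. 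Consequently your proposed dichotomy separating the two families fails, your mechanism for distinguishing different $\beta_n$ (``minimal order of a cover on which the structure becomes overtwisted'') is vacuous, and your worry that tightness of $\beta_n$ ``cannot be obtained from any covering-space argument'' is backwards --- tightness follows at once from tightness of the pullback to $\TT^3$ (an overtwisted disk would lift), or from fillability. The correct invariants here are of a different nature: both families are universally tight, $\alpha_m$ is detected by the rotation/torsion in the bundle direction $t$, and $\beta_n$ by the rotation (equivalently the dividing-curve count on convex representatives, or torsion) along the incompressible tori $\{x_1=\mathrm{const}\}$, which are not isotopic to the fiber. As it stands, the tightness of the $\beta$-family and the pairwise non-contactomorphism arguments in your proposal would not go through and need to be replaced by these considerations.
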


On the other hand, by the work of Kanda \cite{kanda1997classification} the contact structures on $\TT^3$, viewed as
\begin{align*}
\{(x_1,x_2,t) \in \R/\Z \times \R/\Z \times \R/\Z\},
\end{align*}
 are given (up to contactomorphism) by
\begin{align*}
\gamma_m = \ker\left( \sin(2\pi mt)dx_1 + \cos(2\pi mt)dx_2\right),\;\;\;\;\; m = 1,2,3,4,...,
\end{align*}
with only $\gamma_1$ admitting a Stein filling by a result of Eliashberg  \cite{eliashberg1996unique}.
Since we can pull back the Stein structure on $D$ by the double cover $D(T^*\TT^2) \rightarrow D(T^*\K^2)$, we see that $\xi$ pulls back to a Stein-fillable contact structure on $\TT^3$, i.e. $\gamma_1$. From this it easily follows that either $\xi \cong \alpha_1$ or $\xi \cong \beta_1$.

Observe that $\beta_1$ is the canonical contact structure on $S(T^*\K^2)$. Namely, using $(t,x_2)$ coordinates on $\R \times S^1$, the canonical contact structure on $S(T^*(\R \times S^1))$ is $\cos(2\pi x_1)dt + \sin(2\pi x_1)dx_2$, with $x_1$ the natural coordinate on the fiber. Viewing $\K^2$ as $\R \times S^1$ quotiented by the $\Z$-action generated by $(t,x_2) \mapsto (t+1,-x_2)$, this contact structure is invariant under the induced action on $S(T^*(\R\times S^1))$ and descends to precisely $\beta_1$.
Therefore in the case $\xi \cong \beta_1$ we can cut out $D$ from $(\C^2,\omega_\ext)$ and symplectically glue in $D(T^*\K^2)$.
By Novikov's signature additivity, the signature of the re-glued manifold is zero. 
Theorem~\ref{gromovtheorem} then implies that it is symplectomorphic to $(\C^2,\omega_\std)$, and yet it contains a Lagrangian $\K^2$ (since $D(T^*\K^2)$ does), contradicting Theorem~\ref{lagrangiankleinbottletheorem}.

In the case $\xi \cong \alpha_1$, we use the following proposition.
{\em Note: from now on all homology is taken with {\em real} coefficients.}
\begin{proposition}\label{steinfillingproposition}
The contact manifold $(M,\alpha_1)$ admits an exact symplectic filling $X$
such that $H_1(X) \cong 0$.
\end{proposition}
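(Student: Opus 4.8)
\noindent\emph{Proof strategy.}
The plan is to obtain $X$ as a symplectic resolution of a free $\Z/2$-quotient of the canonical Weinstein filling of $(\TT^3,\gamma_1)$, in which the four resulting orbifold points are smoothed by Lagrangian surgery (i.e.\ by gluing in copies of $D^*S^2$) rather than by a K\"ahler resolution, so that the total space stays exact. The starting point is that $(M,\alpha_1)$ is itself such a quotient. In the presentation $M=\big((\R/\Z)_{x_1}\times(\R/\Z)_{x_2}\times(\R/2\Z)_t\big)/\langle\tau_0\rangle$ with $\tau_0\colon(x_1,x_2,t)\mapsto(-x_1,-x_2,t+1)$, the connected double cover $\TT^2\times(\R/2\Z)_t$ carries the pullback of $\alpha_1$, and after setting $u=t/2$ this becomes $\sin(2\pi u)\,dx_1+\cos(2\pi u)\,dx_2$, i.e.\ $\gamma_1$; moreover $\tau_0$ becomes $(u,x_1,x_2)\mapsto(u+\tfrac12,-x_1,-x_2)$. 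Under the standard contactomorphism $(\TT^3,\gamma_1)\cong(S^*\TT^2,\xi_{\mathrm{can}})$ — which matches the $u$-circle with the cotangent fibre circle — this deck transformation is induced by the symplectic involution $\sigma\colon T^*\TT^2\to T^*\TT^2$, $\sigma(x,p)=(-x,-p)$. I would confirm that the quotient is $\alpha_1$ and not $\beta_1$ by observing that $\sigma$ translates (rotates by $\pi$) the cotangent fibre circle, which is exactly the circle along which the contact planes of $\gamma_1$ wind; equivalently, $\tau_0$ acts on $H_1(\TT^3;\Q)$ with the winding direction lying in the $(+1)$-eigenspace, whereas for $\beta_1$ it lies in a $(-1)$-eigenspace.

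Next I would run the construction. The form $\lambda_{\mathrm{can}}=\sum p_i\,dx_i$ is $\sigma$-invariant and $\sigma$ preserves the unit disk bundle $D^*\TT^2$, with fixed locus the four points of the zero section with $x\in\{0,\tfrac12\}^2$; near each of these $\sigma$ is conjugate, in symplectic coordinates, to $-\id$ on $(\C^2,\omega_\std)$, so $D^*\TT^2/\sigma$ is an exact symplectic orbifold with boundary $(M,\alpha_1)$ and four isolated $A_1$ singularities. I then excise a small orbifold ball about each singular point, whose boundary is $S^3/\{\pm\id\}=(\RP^3,\xi_\std)$ with its standard tight contact structure, identified as $(S^*S^2,\xi_{\mathrm{can}})$; this $\RP^3$ appears as a concave boundary component of the resulting manifold. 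Into each I glue a copy of the unit cotangent disk bundle $D^*S^2$, which is an exact (Weinstein) filling of $(S^*S^2,\xi_{\mathrm{can}})$, along its convex boundary. After rescaling the contact forms so they agree and interpolating the Liouville forms across the collars, the result is a smooth compact exact symplectic four-manifold $X$ with $\bdy X=(M,\alpha_1)$. Equivalently, $X$ is a Weinstein neighbourhood of a Lagrangian configuration of five two-spheres plumbed according to the $\widetilde{D}_4$ graph — the image of the zero section, which becomes a sphere, together with the four exceptional spheres — and one could alternatively assemble $X$ by Morse--Bott symplectic handle attachments in the spirit of Example~\ref{RP2inKahlerSurface}. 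Finally $H_1(X;\R)=0$: by the transfer isomorphism $H_1(D^*\TT^2/\sigma;\Q)\cong H_1(D^*\TT^2;\Q)^\sigma$, and $\sigma$ acts as $-\id$ on $H_1(\TT^2;\Q)$ so these invariants vanish; since also $H_1(D^*S^2;\R)=0$ and the pieces are glued along copies of $\RP^3$ with vanishing first real homology, a Mayer--Vietoris computation gives $H_1(X;\R)=0$.

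The step I expect to be the main obstacle is making the smoothing genuinely \emph{exact}: one must choose the Liouville vector fields on $D^*\TT^2/\sigma$ minus the four balls and on each glued-in $D^*S^2$ so that they agree on the overlap, using that both induce (up to scale) the standard contact form on $\RP^3$ and that the primitives can be interpolated without introducing periods; this is routine but must be done carefully, together with the orientation bookkeeping for the contact structures and the verification in the first paragraph. I also note that $H_2(X;\R)\neq0$ is unavoidable: since $H_1(X)=0$ while $H_1(M;\R)\neq0$, the long exact sequence of the pair $(X,\bdy X)$ together with Poincar\'e--Lefschetz duality forces $H^2(X;\R)\neq0$, so $X$ cannot be a rational homology ball — its second homology is necessarily carried by Lagrangian spheres, which is exactly what makes the exact structure possible.
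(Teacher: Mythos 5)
Your proposal is correct and follows essentially the same route as the paper: you quotient $D^*\TT^2$ by the symplectic involution $(x,p)\mapsto(-x,-p)$, identify the four $A_1$ singularities, excise invariant balls to expose concave $(\RP^3,\xi_\std)$ ends, and glue in copies of $D^*S^2$ to obtain the exact filling. The only cosmetic difference is in the $H_1$ computation — the paper observes that the singular quotient $D^*\TT^2/\sigma$ deformation retracts onto $\TT^2/(\pm\id)\cong S^2$, whereas you use the rational transfer isomorphism together with Mayer--Vietoris; both give $H_1(X;\R)=0$ for the same underlying reason ($\sigma$ acts by $-\id$ on $H_1(\TT^2;\R)$).
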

Deferring the proof of Proposition~\ref{steinfillingproposition} for a moment, 
let $\wt{\C^2}$ denote the symplectic manifold obtained by removing $D$ from $(\C^2,\omega_\ext)$ and symplectically gluing in $X$ as in the proposition.
Also, let $X' := \C^2 \setminus D$.
From the Meyer--Vietoris exact sequence
\begin{align*}
\xymatrix
{
H_3(\C^2) \ar[r]\ar@{}[d]|{\isoar} & H_2(\bdy D)
\ar@{}[d]|{\isoar} \ar[r] & H_2(D)\ar@{}[d]|{\isoar\hspace{1.5cm}} \oplus H_2(X') \ar[r]& H_2(\C^2)\ar@{}[d]|{\isoar}\\
0&\R&0\hspace{1.5cm}&0
}
\end{align*}
we see that $H_2(X') \cong \R$. Similarly,
\begin{align*}
 \xymatrix
{
H_2(\C^2) \ar[r]\ar@{}[d]|{\isoar} & H_1(\bdy D)
\ar@{}[d]|{\isoar} \ar[r] & H_1(D)\ar@{}[d]|{\isoar\hspace{1.5cm}} \oplus H_1(X') \ar[r]& H_1(\C^2)\ar@{}[d]|{\isoar}\\
0&\R&\R\hspace{1.5cm}&0
} 
\end{align*}
shows that $H_1(X') \cong 0$.
Let $d = \dim_\R H_2(X)$.
Then from the exact sequence
\begin{align*}
\xymatrix
{
H_2(\bdy X) \ar[r]\ar@{}[d]|{\isoar} & H_2(X)\ar@{}[d]|{\isoar\hspace{1.5cm}} \oplus H_2(X')\ar@{}[d]|{\hspace{1.5cm}\isoar} \ar[r]& H_2(\wt{\C^2})\ar[r]& H_1(\bdy X)\ar@{}[d]|{\isoar} \ar[r]& H_1(X) \oplus H_1(X')\ar@{}[d]|{\isoar}\\
\R & \R^d\hspace{1.2cm}\R &&\R&0 \oplus 0
}  
\end{align*}
we have $\dim_\R H_2(\wt{\C^2}) \geq 1 + d$.
It follows from Theorem~\ref{gromovtheorem} (and the remark following it) that $\wt{\C^2}$ has signature $\sigma(\wt{\C^2}) \leq -1 - d$.

On the other hand, we can compute the signature of $\wt{\C^2}$ using Novikov's additivity formula.
Namely, since $\sigma(D) = \sigma(\C^2) = 0$, we must have $\sigma(X') = 0$, and therefore $\sigma(\wt{\C^2}) = \sigma(X) + \sigma(X') \geq -\dim_\R H_2(X) \geq -d$, a contradiction. This concludes the proof of Theorem~\ref{maintheorem}, modulo the proof of Proposition~\ref{steinfillingproposition}.

\sss

\noindent \textit{ Proof of Proposition~\ref{steinfillingproposition}.}
We use the identifications
\begin{align*}
D(T^*\TT^2) = \{(x_1,x_2,r_1,r_2) \in \R/\Z \times \R/\Z \times \R \times \R\;:\; r_1^2 + r_2^2 \leq 1\}\\
S(T^*\TT^2) = \{(x_1,x_2,\sin(2\pi t),\cos(2\pi t))\;:\; t \in [0,1)\} \subset D(T^*\TT^2)
\end{align*}
and let $\lam = r_1dx_1 + r_2dx_2$ be the canonical Liouville $1$-form on $D(T^*\TT^2)$ with dual Liouville vector field $X_\lam = r_1\bdy_{r_1} + r_2\bdy_{r_2}$.
Note that $\lam$ restricted to $S(T^*\TT^2)$ induces the contact structure $\gamma_1$, and we can identify $(M,\alpha_1)$ with the quotient of $(\TT^3,\gamma_1)$ under the involution $I: S(T^*\TT^2) \rightarrow S(T^*\TT^2)$ given by
\begin{align*}
I(x_1,x_2,t) = (-x_1,-x_2,t+1/2).  
\end{align*}
Observe that $I$ naturally extends to an involution $\wt{I}: D(T^*\TT^2) \rightarrow D(T^*\TT^2)$,
\begin{align*}
\wt{I}(x_1,x_2,r_1,r_2) = (-x_1,-x_2,-r_1,-r_2),
\end{align*}
which preserves $\lam$, and therefore $\lam$ descends to the quotient  $X_s := D(T^*\TT^2)/\wt{I}$ as a primitive of a symplectic form with four $A_1$-type singularities, which we can resolve to get an exact filling.

More precisely, $\wt{I}$ has four isolated fixed points, 
\begin{align*}
\{(0,0,0,0),(1/2,0,0,0),(0,1/2,0,0),(1/2,1/2,0,0)\},
\end{align*}
 and the $\Z/2$-action becomes free if we remove a small invariant neighborhood of each fixed point.
Namely, taking representatives for $x_1$ and $x_2$ in $[-1/2,1/2)$, for sufficiently small $\epsilon$ we can identify the neighborhood 
\begin{align*}
U_\epsilon(0,0,0,0) := \{(x_1,x_2,r_1,r_2)\;:\; x_1^2+x_2^2+r_1^2+r_2^2 < \epsilon\}
\end{align*}
 of the fixed point $(0,0,0,0)$ with a four-dimensional ball whose interior and boundary are preserved by $\wt{I}$.
Moreover, the Liouville vector field $X_\lam$ is outwardly transverse along the boundary of $U_\epsilon(0,0,0,0)$, i.e. $\lam$ restricts to a positive contact structure on $\bdy U_\epsilon(0,0,0,0)$.
Let $U_\epsilon$ denote the union of $U_\epsilon(0,0,0,0)$ with similar neighborhoods of the other three fixed points of $\wt{I}$.
Then $(D(T^*\TT^2) \setminus U_\epsilon) / \wt{I}$ is naturally a smooth manifold and $\lam$ descends to a primitive of a symplectic form with one convex end, identified with $(M,\alpha_1)$, and four concave ends, each identified with the quotient of the tight contact structure on $S^3$ by $\Z/2$.
Each of the concave ends is therefore contactomorphic to the standard contact structure on $\RP^3$, which has $D(T^*S^2)$ as a Stein filling. We can therefore symplectically glue in a copy of $D(T^*S^2)$ along each concave end, and the result is an exact symplectic filling $X$ of $(M,\alpha_1)$.

Finally, we show that $H_1(X) \cong 0$.
It suffices to show that $H_1(X_s) \cong 0$, since removing a point and gluing in a copy of $D(T^*S^2)$ does not affect $H_1$.
The involution $\wt{I}$ commutes with the deformation retraction of $D(T^*\TT^2)$ onto its zero section given by
\begin{align*}
(x_1,x_2,r_1,r_2) \mapsto (x_1,x_2,\tau r_1,\tau r_2),\;\;\;\;\; \tau \in [0,1],
\end{align*}
and hence $X_s$ deformation retracts onto the quotient of the zero section by $\tilde{I}$, which is the quotient of $\TT^2$ by $x \mapsto -x$, i.e. $S^2$. It follows that $H_1(X_s) = 0$.
\end{section}

\begin{section}{Open Whitney umbrellas}\label{Open Whitney umbrellas}

The open Whitney umbrella is an isolated singularity of a Lagrangian surface, first discovered by Givental \cite{givental1986lagrangian} (see also \cite{audin1990quelques}).
A local model for it can be parametrized in $(\R_{q_1q_2p_1p_2}^4,\omega_\std = dq_1\wedge dp_1 + dq_2 \wedge dp_2)$ by
\begin{align*}
&F: \R^2 \rightarrow \R^4\\
&F(t,u) = (q_1,q_2,p_1,p_2) = (t^2,u,tu,\tfrac{2}{3}t^3).
\end{align*}
Equivalently, it has a wavefront diagram in $\R^3_{q_1q_2z}$ parametrized by
\begin{align*}
(q_1,q_2,z) = (t^2,u,\tfrac{2}{3}t^3u).  
\end{align*}
This singularity is particularly important because of the following result.
\begin{theorem}{\normalfont (Givental \cite{givental1986lagrangian})}
  Let $\Sigma$ be a surface. In the space of $\calC^\infty$ isotropic maps $\Sigma \rightarrow (\R^4,\omega_\std)$, the subset of immersions with open Whitney umbrellas is open.
\end{theorem}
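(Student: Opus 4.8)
\noindent The plan is to reduce the assertion to a local stability property of the standard model $F$, using that immersivity is an open condition. Fix the $\calC^\infty$ Whitney topology on maps $\Sigma\to\R^4$, and let $f_0$ be an isotropic map which is an immersion away from a discrete set $\{p_1,p_2,\dots\}$, each $p_i$ being an open Whitney umbrella. On the complement of fixed small disjoint coordinate balls $V_i\ni p_i$ the map $f_0$ is an immersion, and an isotropic immersion of a surface into $\R^4$ is automatically Lagrangian; since immersivity is open, every isotropic $f$ that is $\calC^\infty$-close to $f_0$ remains an immersion off $\bigcup_i V_i$. It therefore suffices to prove the following local statement: if $F\colon(\R^2,0)\to(\R^4,\omega_\std)$ is the standard open Whitney umbrella, then every isotropic germ $f$ which is $\calC^\infty$-close to $F$ near $0$ coincides with $F$ on a neighborhood of $0$ after a small diffeomorphism of the source and a small symplectomorphism of the target; in particular $f$ then has a single non-immersive point near $0$, and it is an open Whitney umbrella.

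\smallskip

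\noindent First I would locate the singular point. Inspecting the $2\times 2$ minors of $dF$, two of them vanish to first order like $t$ and like $u$ respectively, so together they cut out the corank locus $\Sigma^1(F)=\{0\}$ transversally; hence for $f$ near $F$ the corresponding minors define a submersion onto $\R^2$ near $0$, $\Sigma^1(f)$ is a single point $p_f$ with $p_f\to0$ as $f\to F$, and $f$ is an immersion on a punctured neighborhood of $p_f$. One checks moreover that the base projection $\pi\circ F=(t^2,u)$, $\pi\colon\R^4\to\R^2_q$ the standard projection, is a Whitney fold at $0$ (critical locus a smooth curve, kernel direction transverse to it), and that this persists for $f$ near $F$. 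Folds being stable, source and base diffeomorphisms close to the identity bring $\pi\circ f$ to the standard fold, and realizing the base diffeomorphism by a fibre-preserving symplectomorphism of $(\R^4,\omega_\std)$ (which preserves $\lambda=p_1\,dq_1+p_2\,dq_2$) we may assume $\pi\circ f=(t^2,u)$. Writing $f^*\lambda=dz$ with $z(p_f)=0$, isotropy forces $\partial_t z|_{t=0}\equiv0$ and then $f=\bigl(t^2,u,\tfrac{1}{2t}\partial_t z,\partial_u z\bigr)$, where $z$ is $\calC^\infty$-close to the model's generating function $z_F=\tfrac23 t^3 u$. The residual freedom consists of source diffeomorphisms preserving the fold, whose infinitesimal generators act on $z$ by $z\mapsto\tfrac{t}{2}g_1(t^2,u)\,\partial_t z+g_2(t^2,u)\,\partial_u z$, together with the symplectomorphisms $(q,p)\mapsto(q,p+dh(q))$, which act by $z\mapsto z+h(t^2,u)$.

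\smallskip

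\noindent The core of the argument, which I expect to be the main obstacle, is then a determinacy statement for functions: every admissible $z$ (smooth, $\partial_t z|_{t=0}\equiv0$) that is $\calC^\infty$-close to $z_F=\tfrac23 t^3 u$ is carried to $z_F$ by a small residual symmetry. I would prove it by the standard ``infinitesimal stability implies stability'' scheme --- linearize the residual action at $z_F$, show the orbit tangent space is all of the admissible perturbations, and integrate by a path/Moser method solving the homological equation at each time. The tangent-space computation is where the particular shape of $F$ enters: a Hadamard factorization writes any admissible perturbation as $\alpha(u)+t^2\beta(t,u)$, and splitting $\beta$ into its $t$-even and $t$-odd parts shows this equals $h(t^2,u)+t^3\Phi(t^2,u)$ for smooth $h,\Phi$; on the other hand, applying the fold-preserving source vector fields to $z_F=\tfrac23 t^3 u$ produces exactly the directions $t^3\Phi(t^2,u)$, and the $dh(q)$ symplectomorphisms produce exactly $h(t^2,u)$, so the orbit tangent space at $z_F$ is the whole space and the umbrella carries no moduli (as befits its genericity). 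The only genuinely technical issue is the passage from this infinitesimal statement to the local one in the smooth category, handled by the usual Mather-type reasoning: uniform solvability of the homological equation using the relevant division theorems, and integration of the resulting time-dependent vector field.

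\smallskip

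\noindent Assembling the pieces: a sufficiently small perturbation $f$ of $f_0$ is, on each $V_i$, symplectically equivalent near its unique non-immersive point to the standard open Whitney umbrella and an immersion elsewhere on $V_i$, while on $\Sigma\setminus\bigcup_i V_i$ it is an immersion; hence $f$ is again an isotropic immersion with open Whitney umbrellas, so the set of such maps contains a $\calC^\infty$-neighborhood of $f_0$. $\hfill\Box$
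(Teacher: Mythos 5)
This statement is cited to Givental \cite{givental1986lagrangian} and is not proved in the paper, so there is no in-paper argument to compare yours against; what follows is an assessment of the proposal on its own terms. Your overall strategy --- pass to the generating function via the fold structure of the base projection, then prove determinacy by computing the tangent space to the orbit of the residual symmetry group and invoking a Moser/Mather integration --- is the standard Lagrangian-singularity approach and is in the right spirit, and your orbit-tangent-space computation ($w = h(t^2,u) + t^3\Phi(t^2,u)$ matching exactly the $dh$-translations plus the fold-preserving source fields applied to $\tfrac{2}{3}t^3u$) is correct.

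There is, however, a genuine gap in the step where you locate the singular point. You pick the two minors $m_{12}(F)=2t$ and $m_{23}(F)=-u$, observe that they cut out $\{0\}$ transversally, and conclude that $\Sigma^1(f)$ is the single common zero $p_f$ of the corresponding minors for nearby $f$. But that only gives $\Sigma^1(f)\subseteq\{p_f\}$: the corank locus is where \emph{all} six $2\times2$ minors vanish, and the remaining minors of $f$ need not vanish at $p_f$. Even using isotropy (which forces $m_{13}+m_{24}=0$) together with the Pl\"ucker relation, one only gets $m_{13}=m_{24}=0$ at $p_f$; the minors $m_{14},m_{34}$ are unconstrained, so $f$ could a priori be an immersion there and $\Sigma^1(f)$ could be empty, exactly as happens for generic non-isotropic perturbations of $F$. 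The correct way to see that the singularity persists is to do the fold normalization \emph{first} (which needs no prior knowledge of $\Sigma^1(f)$), then read off the singular locus from the generating function: with $\pi\circ f=(t^2,u)$ and $f^*\lambda = dz$, one checks that $df$ drops rank precisely along $\{t=0,\ \partial_t^3 z(0,u)=0\}$, and since $\partial_t^3 z(0,u)$ is $\calC^\infty$-close to $\partial_t^3 z_F(0,u)=4u$, which vanishes transversally, this is a single point. So the conclusion you want is true, but the justification should be rerouted through the generating function, not the na\"ive transversality of two minors. With that reordering, and granting the Mather-type integration step you flag as technical, the sketch is sound.
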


The following proposition shows that $\cone(L_u)$ singularities can always be replaced by open Whitney umbrellas, and vice versa.
\begin{proposition}\label{umbrellaisconeproposition}
There is a symplectic automorphism of $(\R^4\setminus \{0\},\omega_{\std})$ which sends the model open Whitney umbrella (the image of $F$ minus the origin) to the model $\cone(L_u)$ singularity $\{tx\;:\; x \in L_u,\; 0 < t < \infty\}$. % where
\end{proposition}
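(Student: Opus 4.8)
The plan is to recognize the model umbrella $U:=\im F\subset\R^4$ first as a Lagrangian cone for a non-standard but globally exact Liouville form on $\R^4$, then to transport that Liouville structure to the standard one, and finally to identify the resulting Legendrian link with $L_u$. The first step is to produce a primitive of $\omega_\std$ with respect to which $U$ is conical. Parametrizing $U$ by $(q_1,q_2,p_1,p_2)=(t^2,u,tu,\tfrac23 t^3)$, a short computation shows that the standard primitive $\lam_\std=\tfrac12(q_1\,dp_1-p_1\,dq_1+q_2\,dp_2-p_2\,dq_2)$ restricts to $U$ as $\tfrac12 t^2u\,dt+\tfrac16 t^3\,du=d\!\big(\tfrac14 q_2p_2\big)\big|_U$, using $q_2=u$ and $p_2=\tfrac23 t^3$ along $U$. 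Hence $\lambda:=\lam_\std-\tfrac14\,d(q_2p_2)$ is again a primitive of $\omega_\std$ on $\R^4$ with $\lambda|_U\equiv 0$; equivalently, its dual Liouville field $V=\tfrac12 q_1\partial_{q_1}+\tfrac12 p_1\partial_{p_1}+\tfrac14 q_2\partial_{q_2}+\tfrac34 p_2\partial_{p_2}$ is tangent to $U\setminus\{0\}$, since its flow is exactly the quasi-homogeneous rescaling $(t,u)\mapsto(e^{\tau/4}t,e^{\tau/4}u)$ preserving $U$. Thus $U\setminus\{0\}$ is a $V$-conical Lagrangian in $(\R^4\setminus\{0\},\lambda)$.

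Next I would transport this to the standard cone. One computes $V\cdot|x|^2=q_1^2+p_1^2+\tfrac12 q_2^2+\tfrac32 p_2^2>0$ off the origin, so $V$ is complete on $\R^4\setminus\{0\}$ and positively transverse to every round sphere; hence $(\R^4\setminus\{0\},\lambda)$ is a symplectization of $(S^3_1,\ker(\lambda|_{S^3_1}))$. Along the path of primitives $\lam_\std+(1-s)\,d(-\tfrac14 q_2p_2)$, $s\in[0,1]$, the Liouville fields remain transverse to $S^3_1$, so by Gray stability $(S^3_1,\ker(\lambda|_{S^3_1}))$ is contactomorphic to $(S^3,\xi_\std)$. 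Identifying both $(\R^4\setminus\{0\},\lambda)$ and $(\R^4\setminus\{0\},\lam_\std)$ with the symplectization of the standard contact $S^3$, I obtain a symplectomorphism $\Phi_1$ of $\R^4\setminus\{0\}$ with $\Phi_1^*\lam_\std=\lambda$; in particular $\Phi_1$ carries $V$ to the radial Liouville field, so $\Phi_1(U\setminus\{0\})$ is an honest Lagrangian cone $\cone(L')\setminus\{0\}$ over a Legendrian knot $L'\subset(S^3,\xi_\std)$ — the ``link'' of the umbrella singularity.

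The remaining step, which I expect to be the main obstacle, is to show that $L'$ is Legendrian isotopic to $L_u$. Since $F$ is injective and $U\cap B^4_r$ is an embedded disk, one should check from the front $(q_1,q_2,z)=(t^2,u,\tfrac23 t^3u)$ that $L'$ is the smooth unknot, and then compute its classical invariants. The cleanest route is the one already used in the analysis of the cone $\{G(s,T)\}$ in Section~\ref{constructionssection}: exhibit a disk with a single negative hyperbolic complex point bounded by a Legendrian isotopic copy of $L'$ and apply the relative version of Lai's formulas, obtaining $\tb(L')=-2$ and $\rot(L')=\pm 1$. (Alternatively, $\rot(L')=\pm1$ is Givental's computation that the Maslov index around an open Whitney umbrella is $\pm2$, and $\tb(L')=-2$ is then forced for a Legendrian unknot with $\rot=\pm1$.) By the Eliashberg--Fraser theorem \cite{eliashberg2009topologically}, $L'$ is then Legendrian isotopic to $L_u$, as in Figure~\ref{umbrella}.

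Finally, a Legendrian isotopy from $L'$ to $L_u$ extends to a contact isotopy of $(S^3,\xi_\std)$, which lifts — as in the second step — to a symplectic isotopy $\Phi_2$ of $(\R^4\setminus\{0\},\lam_\std)$ carrying $\cone(L')$ to the model $\cone(L_u)$. The composition $\Phi_2\circ\Phi_1$ is then the required symplectic automorphism of $(\R^4\setminus\{0\},\omega_\std)$. The first two steps are a short explicit calculation together with a standard symplectization and Gray-stability argument, and the last is routine; the genuine content lies in pinning down the link of the open Whitney umbrella as precisely $L_u$, and in particular in verifying that it is unknotted rather than some other stabilized Legendrian.
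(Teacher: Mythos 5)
Your proposal follows the same architecture as the paper's proof: produce an alternative primitive $\lambda$ of $\omega_\std$ whose Liouville vector field is tangent to the umbrella $U$, use the Liouville flows to build a symplectomorphism carrying $U$ to a cone over some Legendrian unknot $L'$, pin down the classical invariants of $L'$, and invoke the Eliashberg--Fraser classification to conclude $L'\simeq L_u$ (and then extend a Legendrian isotopy to a contactomorphism and lift). Your choice of primitive $\lambda=\lam_\std-\tfrac14 d(q_2p_2)$ with Liouville field $V=\tfrac12 q_1\partial_{q_1}+\tfrac12 p_1\partial_{p_1}+\tfrac14 q_2\partial_{q_2}+\tfrac34 p_2\partial_{p_2}$ is different from the paper's quasi-homogeneous $\lam=\tfrac15(2q_1dp_1-3p_1dq_1+2q_2dp_2-3p_2dq_2)$ but equally valid (both are checked to be tangent to $U$), and your Gray-stability argument makes explicit a step the paper takes for granted. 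So far this is correct and a faithful variant of the paper's proof.

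However, the step you yourself flag as the ``main obstacle'' — identifying the invariants of $L'$ — is where the proposal has a genuine gap. Your route~(a), via Lai's formulas, needs an explicit disk with a controlled complex point whose boundary you can relate to $L'$; the paper carries this out for the cone $G(s,T)$ precisely because that cone comes with an explicit parametrization of its Legendrian link, whereas your $L'$ is defined only abstractly through a Gray-stability contactomorphism, so there is nothing concrete to feed into Lai's formula as stated. Your route~(b) is simply wrong as written: for a Legendrian unknot with $\rot=\pm1$ the Bennequin inequality and parity give $\tb\in\{-2,-4,-6,\ldots\}$, so $\tb=-2$ is not forced. The paper's logic runs the other way: it first identifies the image of $F$ with the conormal bundle of the semicubical parabola (giving the smooth unknottedness and an explicit picture of the link), computes $\tb(L)=-2$ directly from $F$ following Givental, and only then deduces $\rot(L)=\pm1$ from the Bennequin inequality plus parity. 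That conormal-bundle identification, which gives explicit control over the link, is the missing idea you would need to close the argument.
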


\begin{proof}
There is a primitive $\lam$ for $\omega_\std$ given by
\begin{align*}
\lam := \dfrac{1}{5}\left(2q_1dp_1 - 3p_1dq_1 + 2q_2dp_2 - 3p_2 dq_2\right),
\end{align*}
with $\omega_\std$-dual Liouville vector field given by
\begin{align*}
X = \dfrac{1}{5}\left( 2q_1\bdy_{q_1} + 3p_1\bdy_{p_1} + 2q_2\bdy_{q_2} + 3p_2\bdy_{p_2}\right).
\end{align*}
Observe that $X$ vanishes at the singular point of $F$ and is otherwise tangent to the image of $F$.
Indeed, we have
\begin{align*}
F_*(t\bdy_t + 2u\bdy_u) &= \left(2t^2\bdy_{q_1} + tu\bdy_{p_1} + 2t^3\bdy_{p_2}\right)
+ \left(2u\bdy_{q_2} + 2tu\bdy_{p_1}\right)\\
&= 2q_1\bdy_{q_1} + 3p_1\bdy_{p_1} + 2q_2\bdy_{q_2} + 3p_2\bdy_{p_2}\\
&= 5X.
\end{align*}
Let $\lam_{rad} = \dfrac{1}{2}\left(q_1dp_1 - p_1dq_1 + q_2dp_2 - p_2 dq_2\right)$
be the Liouville $1$-form with $\omega_\std$-dual Liouville vector field
$X_\rad= \dfrac{1}{2}\left(q_1\bdy_{q_1} + p_1\bdy_{p_1} + q_2\bdy_{q_2} + p_2\bdy_{p_2}\right)$.
Let $\Phi$ be a contactomorphism from $(S^3,\lam)$ to $(S^3,\lam_\rad)$.
After deforming the first $S^3$ by a radial isotopy we can assume that $\Phi$ is a strict contactomorphism.
Then $\Phi$ extends uniquely to a symplectomorphism $(\R^4 \setminus \{0\},\omega_\std) \rightarrow (\R^4 \setminus \{0\},\omega_\std)$ which is equivariant with respect to the flows of $X$ and $X_\rad$ respectively. 
The composition of this symplectomorphism with $F$ is tangent to $X_\rad$ and hence is the cone over some Legendrian $L$.
In fact, by a simple change of coordinates we can identify the image of $F$ with the total space of the conormal bundle of the semicubical parabola in $\R^2$. As a smooth knot, $L$ therefore differs from the unknot by a single twist, and in particular is unknotted. It is also not difficult to see from $F$ that $\tb(L)$ equals $-2$ (cf. the proof of Theorem 2 in \cite{givental1986lagrangian}). 
By the Thurston--Bennequin inequality and parity considerations or directly by \cite[\S 4]{givental1986lagrangian}, we must have $\rot(L) = \pm 1$ and by the Eliashberg--Fraser theorem \cite{eliashberg2009topologically} this uniquely characterizes the Legendrian isotopy class of $L_u$.
\end{proof}

Thanks to Proposition~\ref{umbrellaisconeproposition}, the constructions in 
$\S$\ref{ConstructingSingularSurfacesSection} together with Theorems~\ref{lagrangiankleinbottletheorem} and \ref{lagrangianrealprojectiveplanetheorem}
resolve a question initially posed and partially solved by Givental in \cite{givental1986lagrangian}, asking which surfaces embed as Lagrangians in $(\R^4,\omega_\std)$ with a given number of open Whitney umbrellas. 
\begin{theorem}\label{surfaceswithumbrellas}
A closed surface $\Sigma$ of Euler characteristic $\chi$ embeds as a Lagrangian in $(\R^4,\omega_\std)$ with $k$ open Whitney umbrellas if and only if the pair $(\chi,e = -\chi - k)$ is compatible with Theorem~\ref{maintheorem}, i.e. if and only if
\begin{itemize}
\item $k = -\chi$ and $\chi \neq 2$ if $\Sigma$ is orientable
\item $(\chi,k) \neq (1,1)$ or $(0,0)$ and $k \in 
\{4-3\chi, -3\chi, -3\chi - 4,...,\chi + 4 - 4\lfloor\chi/4+1\rfloor\}$ 
if $\Sigma$ is non-orientable.
\end{itemize}
\end{theorem}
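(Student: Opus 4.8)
The plan is to derive Theorem~\ref{surfaceswithumbrellas} directly from Theorem~\ref{maintheorem} via the dictionary set up in Proposition~\ref{umbrellaisconeproposition}, together with Lemma~\ref{surroundinglemma} and the two obstruction results Theorems~\ref{lagrangiankleinbottletheorem} and~\ref{lagrangianrealprojectiveplanetheorem}. The first observation is the substitution: by Proposition~\ref{umbrellaisconeproposition}, a Lagrangian surface with $k$ open Whitney umbrellas is the same (up to local symplectic change of coordinates away from the singular points) as a Lagrangian surface with $k$ $\cone(L_u)$ singularities, and $\tb(L_u)+1 = -1$. So from formula~(\ref{eformula}) in Lemma~\ref{surroundinglemma}, such a surface with Euler characteristic $\chi$ surrounds a rationally convex disk bundle with Euler number $e = -\chi + \sum_{i=1}^k(\tb(L_u)+1) = -\chi - k$, which is the relation asserted in the theorem. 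Conversely, anything surrounded this way is a $D(\chi,e)$ or $\wt D(\chi,e)$ in the sense of Theorem~\ref{maintheorem}.

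For the ``if'' (construction) direction, I would note that every Lagrangian surface produced in $\S$\ref{ConstructingSingularSurfacesSection} is built entirely out of $\cone(L_u)$ singularities (the tori joined by $\S$\ref{combinationsofcones}(a), Example~\ref{kleinbottleexample}, and the moves in Table~\ref{table} using $\S$\ref{combinationsofcones}(b) and Proposition~\ref{NonExactSmoothingProp}, the latter only removing singularities). So for each $(\chi,e)$ allowed by Theorem~\ref{maintheorem} we already have an explicit Lagrangian realizing it with exactly $k = -\chi - e$ umbrellas, once we translate $\cone(L_u)$ singularities into open Whitney umbrellas by Proposition~\ref{umbrellaisconeproposition}. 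Substituting $e = -\chi - k$ into the list in Theorem~\ref{maintheorem} gives the $k$-ranges stated: for orientable $\Sigma$, $e = 0$ forces $k = -\chi$, with $\chi \neq 2$; for non-orientable $\Sigma$, $e \in \{2\chi-4, 2\chi, \dots, -2\chi - 4 + 4\lfloor\chi/4+1\rfloor\}$ becomes $k = -\chi - e \in \{4-3\chi, -3\chi, -3\chi-4, \dots, \chi + 4 - 4\lfloor\chi/4+1\rfloor\}$, with $(\chi,e)\neq(1,-2),(0,0)$ translating to $(\chi,k)\neq(1,1),(0,0)$. One sanity check to include: $k \geq 0$ is automatic, and one should verify that every even number $k$ in the claimed range indeed corresponds to an achievable $e$ (parity: for non-orientable $\Sigma$ the Massey constraint forces $e \equiv 2\chi \pmod 4$, hence $k = -\chi - e \equiv -\chi - 2\chi = -3\chi \equiv \chi \pmod 4$, matching the arithmetic progression with common difference $4$).

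For the ``only if'' direction, suppose $\Sigma \subset (\R^4,\omega_\std)$ is Lagrangian with $k$ open Whitney umbrellas. By Proposition~\ref{umbrellaisconeproposition} we may regard these as $\cone(L_u)$ singularities, and since $L_u$ is a Legendrian unknot, Lemma~\ref{surroundinglemma} produces a strictly pseudoconvex domain in $\C^2$ with rationally convex closure diffeomorphic to the disk bundle over $\Sigma$ with $e = -\chi - k$. By the ``only these'' half of Theorem~\ref{maintheorem}, $(\chi, -\chi - k)$ must lie in the allowed list — which, after the substitution above, is exactly the stated constraint, \emph{except} that the two genuinely excluded bundles $\wt D(1,-2)$ and $\wt D(0,0)$ correspond to $(\chi,k) = (1,1)$ and $(0,0)$. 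Here I must be slightly careful: $\wt D(1,-2)$ and $\wt D(0,0)$ are excluded in Theorem~\ref{maintheorem} as rationally convex domains, but I need the sharper statement that $\RP^2$ with one umbrella and $\K^2$ with no umbrellas do not even embed as Lagrangians. The no-umbrella case is precisely Theorem~\ref{lagrangiankleinbottletheorem} (Shevchishin), and the one-umbrella $\RP^2$ case is precisely Theorem~\ref{lagrangianrealprojectiveplanetheorem} (via Proposition~\ref{NonExactSmoothingProp} smoothing the umbrella to yield a Lagrangian Klein bottle, contradicting Shevchishin). So these two cases are handled directly by the obstruction theorems rather than by the domain-theoretic statement, and there is no circularity since those theorems are proved independently.

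The main obstacle — really the only nontrivial point — is the bookkeeping to confirm that the translation $e \leftrightarrow k = -\chi - e$ carries the allowed set of Theorem~\ref{maintheorem} bijectively onto the allowed set of Theorem~\ref{surfaceswithumbrellas}, including the endpoints of the non-orientable progression and the two exceptional pairs; this is elementary arithmetic but should be written out so the reader can check it. A secondary point worth a sentence is that the local normal form of Proposition~\ref{umbrellaisconeproposition} is a symplectomorphism of $\R^4\setminus\{0\}$ fixing the singular point, so applying it independently in disjoint Darboux balls around each umbrella genuinely converts a Lagrangian-with-$k$-umbrellas into a Lagrangian-with-$k$-$\cone(L_u)$-singularities and back, with no global cohomological obstruction; this is what makes the equivalence exact rather than merely up to isotopy.
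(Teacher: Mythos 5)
Your proposal is correct and follows essentially the same route as the paper, whose own ``proof'' of Theorem~\ref{surfaceswithumbrellas} is the single sentence preceding the statement: translate $\cone(L_u)$ singularities into open Whitney umbrellas via Proposition~\ref{umbrellaisconeproposition}, read off the constructions from $\S$\ref{ConstructingSingularSurfacesSection} for realizability, and invoke Lemma~\ref{surroundinglemma} together with the exclusion results (in particular Theorems~\ref{lagrangiankleinbottletheorem} and~\ref{lagrangianrealprojectiveplanetheorem} for the two exceptional non-orientable pairs) for the converse. The explicit bookkeeping $e = -\chi - k$ and the remark on parity are exactly the intended filling-in of details.
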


\begin{remark}
Many constructions of Lagrangian surfaces with open Whitney umbrellas are present, at least implicitly, in the work of Givental \cite{givental1986lagrangian} and Audin \cite{audin1990quelques}. Namely, applying Lagrangian surgery twice to the real projective plane with two self-intersection points and one umbrella shown in Figure 5(b) of \cite{givental1986lagrangian}, we get an example with $\chi = -3$ and $k=1$. 
Starting with either this or a torus, we can:
\begin{itemize}
\item repeatedly attach a disorienting handle with two umbrellas (this lowers $\chi$ by $2$), 
\item intersect our surface with a torus at two points, and then resolve the intersections using Lagrangian surgery.
\end{itemize}
Together these generate every orientable case in Theorem~\ref{surfaceswithumbrellas} and every non-orientable case with $\chi \leq -2$ and $k \leq -\chi$. 
\end{remark}

Givental also conjectured that Gromov's celebrated theorem on exact Lagrangians in $(\R^{2n},\omega_\std)$ holds for surfaces with umbrellas:
\begin{conjecture}\label{giventalconjecture}
There are no exact Lagrangian surfaces in $(\R^4,\omega_\std)$, even if we allow open Whitney umbrellas.  
\end{conjecture}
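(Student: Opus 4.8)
The plan is to prove Conjecture~\ref{giventalconjecture} by reducing it, via Proposition~\ref{umbrellaisconeproposition}, to a statement about Lagrangian surfaces with $\cone(L_u)$ singularities, and then running the same symplectic cut-and-paste machinery used in $\S\ref{projectivespacesection}$ and $\S\ref{kleinbottlesection}$. First I would observe that an exact Lagrangian surface $\Sigma \subset (\R^4,\omega_\std)$ with umbrella singularities can be assumed, after Proposition~\ref{umbrellaisconeproposition}, to be smoothly embedded apart from finitely many $\cone(L_u)$ singularities, and that exactness forces each local primitive to glue up globally, so that $\Sigma$ bounds in the appropriate relative sense. The key point to extract is that an exact such $\Sigma$ admits, by Lemma~\ref{surroundinglemma} (with all $L_i = L_u$, hence $\tb(L_i)+1 = -1$), a rationally convex strictly pseudoconvex neighborhood $D$ diffeomorphic to the disk bundle over $\Sigma$ with Euler number $e = -\chi(\Sigma) - k$, where $k$ is the number of umbrellas; and exactness should moreover guarantee that a primitive $\lambda$ for $\omega_\std$ restricts to $d^c$ of a plurisubharmonic function on $\bdy D$, giving extra rigidity on the contact boundary.

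Next I would split into the orientable and non-orientable cases. In the orientable case, exactness of $\Sigma$ together with Stokes' theorem and the Maslov-index constraint (the remark after Proposition~\ref{NonExactSmoothingProp}, that the Maslov indices of the $\cone(L_u)$ singularities sum to zero for an orientable exact $\Sigma$ in $\R^4$) should let us pair up the singularities with opposite Maslov indices and resolve them all via the orientable analogue of Proposition~\ref{NonExactSmoothingProp}, producing a genuine smoothly embedded Lagrangian surface; an exact closed Lagrangian surface in $(\R^4,\omega_\std)$ is then forbidden outright by Gromov's theorem, a contradiction. In the non-orientable case we cannot resolve everything, but we have Proposition~\ref{NonExactSmoothingProp} to resolve $\cone(L_u)$ singularities one at a time into Lagrangian M\"obius strips at the cost of changing $\chi$; combined with the sharp list in Theorem~\ref{maintheorem} and Theorem~\ref{surfaceswithumbrellas}, the exactness hypothesis should pin down $(\chi,e)$ to a value that is either excluded or leads, via the cut-and-paste argument of $\S\ref{projectivespacesection}$--$\S\ref{kleinbottlesection}$, to a symplectic $(\R^4,\omega_\std)$ containing a Lagrangian Klein bottle or a Lagrangian $\RP^2$, contradicting Theorem~\ref{lagrangiankleinbottletheorem} or Theorem~\ref{lagrangianrealprojectiveplanetheorem}.

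More precisely, in the non-orientable case I would argue that exactness makes $[\Sigma]$ null-homologous with trivial symplectic area, which together with the Euler-number formula $e = -\chi - k$ and Massey's bounds forces $e$ and $\chi$ into a narrow range; then, exactly as in $\S\ref{kleinbottlesection}$, one removes the rationally convex filling $D$ from $(\C^2,\omega_\ext)$, glues in a Stein (or merely exact) filling of $\bdy D$ with controlled $H_1$ and $H_2$ (using the classification of tight contact structures and the fillings produced in Example~\ref{RP2inKahlerSurface} and Proposition~\ref{steinfillingproposition}), and derives a contradiction between the signature computed by Novikov additivity and the constraint $\sigma \le 0$ coming from Gromov--McDuff (Theorem~\ref{gromovtheorem}). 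The homological bookkeeping here is essentially the Mayer--Vietoris computation already carried out in $\S\ref{kleinbottlesection}$, now fed by the exactness of $\Sigma$ rather than by a bare diffeomorphism type.

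The main obstacle I expect is the non-orientable case, and specifically establishing that exactness of $\Sigma$ translates into a usable constraint on the contact boundary $\bdy D$ and its fillings — in the earlier sections the topology of $D$ was pinned down by fiat (it \emph{was} $\wt D(1,-2)$ or $\wt D(0,0)$), whereas here we only know $e = -\chi - k$ and must use exactness to show that after all possible resolutions via Proposition~\ref{NonExactSmoothingProp} one unavoidably lands on $\wt D(1,-2)$ or $\wt D(0,0)$ (or on something with non-negative-definite intersection form once reglued). A secondary technical point is making the ``exactness glues up'' step rigorous across the umbrella/$\cone(L_u)$ singularities: one must check that the real-analytic Lagrangian $\wt\Sigma$ of Lemma~\ref{realanalyticitylemma}, and the $1$-forms appearing in Lemma~\ref{surroundinglemma}'s construction, can be chosen so that exactness of the original $\Sigma$ is inherited by the defining function of $D$, so that $\lambda_\phi$ is genuinely $d^c$ of a function and not merely closed. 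I would isolate these as separate lemmas before assembling the contradiction.
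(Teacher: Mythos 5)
The central problem is that your argument runs in the wrong direction: Conjecture~\ref{giventalconjecture} is \emph{false}, and the paper resolves it by giving a counterexample, not a proof. You set out to prove the nonexistence statement, but the paper's conclusion is the opposite.

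Concretely, the paper invokes work of Lin \cite{lin2013exact}, who constructs an \emph{exact} Lagrangian cap of Euler characteristic $-3$ for the Legendrian unknot with $\tb=-3$ and $\rot=0$. That Legendrian is a connected sum of two copies of $L_u$ with opposite rotation numbers, so gluing the cap to the corresponding singular Lagrangian disk in $(B^4,\omega_\std)$ produces an exact Lagrangian genus-two surface in $(\R^4,\omega_\std)$ with two $\cone(L_u)$ singularities. By Proposition~\ref{umbrellaisconeproposition}, each $\cone(L_u)$ singularity may be replaced by an open Whitney umbrella while preserving exactness, yielding a genuine exact Lagrangian surface with open Whitney umbrellas — a counterexample.

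Several steps in your proposal would break down if pursued. In the orientable case you propose resolving pairs of $\cone(L_u)$ singularities with opposite Maslov indices and then invoking Gromov's theorem; but the smoothing in Proposition~\ref{NonExactSmoothingProp} uses the non-exact M\"obius strip $\Gamma_A$, so the surface obtained after resolution is no longer exact, and Gromov's theorem gives no contradiction. (Indeed, the smoothing is explicitly called a ``non-exact smoothing'' in the paper, precisely because exactness is destroyed.) Likewise, Lemma~\ref{surroundinglemma} produces a rationally convex neighborhood of $\Sigma$, but rational convexity of the disk-bundle neighborhood is not the obstruction here: exact Lagrangian caps, and hence exact singular Lagrangian surfaces, simply do exist. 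The specific cases killed in $\S$\ref{projectivespacesection} and $\S$\ref{kleinbottlesection} (namely $\wt{D}(1,-2)$ and $\wt{D}(0,0)$) are ruled out by symplectic cut-and-paste, but these are exactly the two exceptional cases; for other $(\chi,e)$ no such contradiction is available, which is consistent with the counterexample having $\chi=-2$ (genus two) and $k=2$.
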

Note that this conjecture is consistent with Theorem~\ref{lagrangianrealprojectiveplanetheorem}, since $H^1(\RP^2;\R) = 0$ and therefore any Lagrangian $\RP^2$ is necessarily exact.
However, counterexamples to Conjecture~\ref{giventalconjecture} are implicitly contained in the recent work of Lin \cite{lin2013exact}.
He constructs an exact Lagrangian cap with $\chi = -3$ of the Legendrian unknot with $\tb = -3$ and $\rot = 0$.
Since this Legendrian is the connected sum of two copies of $L_u$ (with opposite rotation numbers), we get an exact Lagrangian genus two surface with two $\cone(L_u)$ singularities. The $\cone(L_u)$ singularities can be replaced by open Whitney umbrellas and the resulting singular Lagrangian surface will still be exact, contradicting Conjecture~\ref{giventalconjecture}.
\end{section}

\nocite{*}
\bibliographystyle{math}
\bibliography{RCDSLS}

\begin{thebibliography}{McD2}

\bibitem[AS]{abouzaid2010altering}
Mohammed Abouzaid and Paul Seidel.
\newblock {Altering symplectic manifolds by homologous recombination}.
\newblock {\em arXiv preprint arXiv:1007.3281} (2010).

\bibitem[AM]{akbulut2000exotic}
Selman Akbulut and Rostislav Matveyev.
\newblock {Exotic structures and adjunction inequality}.
\newblock {\em Turkish J. Math} {\bf 21}(1997), 47--53.

\bibitem[Aud]{audin1990quelques}
Mich{\`e}le Audin.
\newblock {Quelques remarques sur les surfaces lagrangiennes de Givental}.
\newblock {\em Journal of Geometry and Physics} {\bf 7}(1990), 583--598.

\bibitem[CL]{castro2010hamiltonian}
Ildefonso Castro and Ana~M Lerma.
\newblock {Hamiltonian stationary self-similar solutions for Lagrangian mean
  curvature flow in the complex Euclidean plane}.
\newblock {\em Proc. Amer. Math. Soc} {\bf 138}(2010), 1821--1832.

\bibitem[Cha]{chantraine2010lagrangian}
Baptiste Chantraine.
\newblock {Lagrangian concordance of Legendrian knots}.
\newblock {\em Algebr. Geom. Topol} {\bf 10}(2010), 63--85.

\bibitem[CE1]{cieliebak2012stein}
Kai Cieliebak and Yakov Eliashberg.
\newblock {\em From Stein to Weinstein and back: Symplectic geometry of affine
  complex manifolds}.
\newblock American Mathematical Society, 2012.

\bibitem[CE2]{cieliebak2013topology}
Kai Cieliebak and Yakov Eliashberg.
\newblock {The topology of rationally and polynomially convex domains}.
\newblock {\em Inventiones mathematicae} {\bf 199}(2015), 215--238.

\bibitem[DS]{duval1995polynomial}
Julien Duval and Nessim Sibony.
\newblock {Polynomial convexity, rational convexity, and currents}.
\newblock {\em Duke Mathematical Journal} {\bf 79}(1995), 487--513.

\bibitem[Ekh]{ekholm2012rational}
Tobias Ekholm.
\newblock {Rational SFT, linearized Legendrian contact homology, and Lagrangian
  Floer cohomology}.
\newblock pages 109--145. Springer, 2012.

\bibitem[EHK]{ekholm2012legendrian}
Tobias Ekholm, Ko~Honda, and Tam{\'a}s K{\'a}lm{\'a}n.
\newblock {Legendrian knots and exact Lagrangian cobordisms}.
\newblock {\em arXiv preprint arXiv:1212.1519} (2012).

\bibitem[Eli1]{eliashberg1990topological}
Yakov Eliashberg.
\newblock {Topological characterization of Stein manifolds of dimension $>$ 2}.
\newblock {\em International Journal of Mathematics} {\bf 1}(1990), 29--46.

\bibitem[Eli2]{eliashberg1996unique}
Yakov Eliashberg.
\newblock {Unique holomorphically fillable contact structure on the 3-torus}.
\newblock {\em International Mathematics Research Notices} (1996), no. 2,
  77--82.

\bibitem[EF]{eliashberg2009topologically}
Yakov Eliashberg and Maia Fraser.
\newblock {Topologically trivial Legendrian knots}.
\newblock {\em Journal of Symplectic Geometry} {\bf 7}(2009), 77--127.

\bibitem[EM]{eliashberg2013lagrangian}
Yakov Eliashberg and Emmy Murphy.
\newblock {Lagrangian caps}.
\newblock {\em Geometric and Functional Analysis} {\bf 23}(2013), 1483--1514.

\bibitem[EG]{eliashberg1998lagrangian}
Yasha Eliashberg and Misha Gromov.
\newblock {Lagrangian intersection theory: finite-dimensional approach}.
\newblock {\em Translations of the American Mathematical Society-Series 2} {\bf
  186}(1998), 27--118.

\bibitem[Etn]{etnyre2005legendrian}
John~B Etnyre.
\newblock {Legendrian and transversal knots}.
\newblock {\em Handbook of knot theory} (2005), 105--185.

\bibitem[For1]{forstnerivc1992complex}
Franc Forstneri{\v{c}}.
\newblock {Complex tangents of real surfaces in complex surfaces}.
\newblock {\em Duke Mathematical Journal} {\bf 67}(1992), 353--376.

\bibitem[For2]{forstnerivc2003stein}
Franc Forstneri{\v{c}}.
\newblock {Stein domains in complex surfaces}.
\newblock {\em The Journal of Geometric Analysis} {\bf 13}(2003), 77--94.

\bibitem[For3]{forstnerivc2011}
Franc Forstneri{\v{c}}.
\newblock {\em Stein manifolds and holomorphic mappings. The homotopy principle
  in complex analysis}.
\newblock Springer, 2011.

\bibitem[GLS]{ghiggini2007tight}
Paolo Ghiggini, Paolo Lisca, and Andr{\'a}s Stipsicz.
\newblock {Tight contact structures on some small Seifert fibered 3-manifolds}.
\newblock {\em American journal of mathematics} {\bf 129}(2007), 1403--1447.

\bibitem[Gir]{giroux1999infinite}
Emmanuel Giroux.
\newblock {Une infinit{\'e} de structures de contact tendues sur une
  infinit{\'e} de vari{\'e}t{\'e}s}.
\newblock {\em Inventiones mathematicae} {\bf 135}(1999), 789--802.

\bibitem[Giv]{givental1986lagrangian}
Aleksandr Givental.
\newblock {Lagrangian imbeddings of surfaces and unfolded Whitney umbrella}.
\newblock {\em Functional Analysis and Its Applications} {\bf 20}(1986),
  197--203.

\bibitem[Gro]{gromov1985pseudo}
Mikhael Gromov.
\newblock {Pseudo holomorphic curves in symplectic manifolds}.
\newblock {\em Inventiones mathematicae} {\bf 82}(1985), 307--347.

\bibitem[Hon]{honda2000classification}
Ko~Honda.
\newblock {On the classification of tight contact structures II}.
\newblock {\em Journal of Differential Geometry} {\bf 55}(2000), 83--143.

\bibitem[Joh]{morsebott}
Joe Johns.
\newblock {Morse--Bott handle attachments and plumbing}.
\newblock {\em Preprint}.

\bibitem[Kan]{kanda1997classification}
Yutaka Kanda.
\newblock {The classification of tight contact structures on the 3-torus}.
\newblock {\em Communications in Analysis and Geometry} {\bf 5}(1997),
  413--438.

\bibitem[Lai]{lai1972characteristic}
Hon~Fei Lai.
\newblock {Characteristic classes of real manifolds immersed in complex
  manifolds}.
\newblock {\em Transactions of the American Mathematical Society} {\bf
  172}(1972), 1--33.

\bibitem[Lin]{lin2013exact}
Francesco Lin.
\newblock {Exact Lagrangian caps of Legendrian knots}.
\newblock {\em arXiv preprint arXiv:1309.5101} (2013).

\bibitem[LM]{lisca1997tight}
Paolo Lisca and Gordana Mati{\'c}.
\newblock {Tight contact structures and Seiberg--Witten invariants}.
\newblock {\em Inventiones mathematicae} {\bf 129}(1997), 509--525.

\bibitem[Mas]{massey1969proof}
William Massey.
\newblock {Proof of a conjecture of Whitney}.
\newblock {\em Pacific Journal of Mathematics} {\bf 31}(1969), 143--156.

\bibitem[McD1]{mcduff1990structure}
Dusa McDuff.
\newblock {The structure of rational and ruled symplectic 4-manifolds}.
\newblock {\em Journal of the American Mathematical Society} {\bf 3}(1990),
  679--712.

\bibitem[McD2]{mcduff1991symplectic}
Dusa McDuff.
\newblock {Symplectic manifolds with contact type boundaries}.
\newblock {\em Inventiones mathematicae} {\bf 103}(1991), 651--671.

\bibitem[MS]{mcduff2012j}
Dusa McDuff and Dietmar Salamon.
\newblock {\em J-holomorphic curves and symplectic topology}.
\newblock American Mathematical Soc., 2012.

\bibitem[Mur]{murphy2012loose}
Emmy Murphy.
\newblock {Loose Legendrian embeddings in high dimensional contact manifolds}.
\newblock {\em arXiv preprint arXiv:1201.2245} (2012).

\bibitem[Nem1]{nemirovski1999complex}
Stefan Nemirovski.
\newblock {Complex analysis and differential topology on complex surfaces}.
\newblock {\em Russian Mathematical Surveys} {\bf 54}(1999), 729--752.

\bibitem[Nem2]{nemirovski2003adjunction}
Stefan Nemirovski.
\newblock {Adjunction inequality and coverings of Stein surfaces}.
\newblock {\em Turkish Journal of Mathematics} {\bf 27}(2003), 161--172.

\bibitem[Nem3]{nemirovski2008finite}
Stefan Nemirovski.
\newblock {Finite unions of balls in $\C^n$ are rationally convex}.
\newblock {\em Russian Mathematical Surveys} {\bf 63}(2008), 381--382.

\bibitem[Nem4]{nemirovski2009lagrangian}
Stefan Nemirovski.
\newblock {Lagrangian Klein Bottles in $\R^{2n}$}.
\newblock {\em Geometric and Functional Analysis} {\bf 19}(2009), 902--909.

\bibitem[Neu]{neumann1981calculus}
Walter~D Neumann.
\newblock {A calculus for plumbing applied to the topology of complex surface
  singularities and degenerating complex curves}.
\newblock {\em Transactions of the American Mathematical Society} {\bf
  268}(1981), 299--344.

\bibitem[OS]{ozbagci2004surgery}
Burak Ozbagci and Andr{\'a}s Stipsicz.
\newblock
\newblock Springer, 2004.

\bibitem[She]{shevchishin2009lagrangian}
Vsevolod~V Shevchishin.
\newblock {Lagrangian embeddings of the Klein bottle and combinatorial
  properties of mapping class groups}.
\newblock {\em Izvestiya: Mathematics} {\bf 73}(2009), 797--859.

\bibitem[Sto]{stout2007polynomial}
Edgar~Lee Stout.
\newblock {\em Polynomial convexity}.
\newblock Springer, 2007.

\end{thebibliography}

\end{document}